\newcommand{\Z}{{\mathbb Z}}
\newcommand{\C}{{\mathbb C}}
\newcommand{\R}{{\mathbb R}}
\newcommand{\N}{{\mathbb N}}
\def\<{\langle}
\def\>{\rangle}
\newtheorem{thm}{Theorem}[section]
\newtheorem{prop}[thm]{Proposition}
\newtheorem{lem}[thm]{Lemma}
\newtheorem{rmk}[thm]{Remark}
\newtheorem{definition}[thm]{Definition}
\begin{document}

\begin{center}
{\Large \bf Twisted Heisenberg-Virasoro vertex operator algebra}
\end{center}

\begin{center}
{Hongyan Guo$^{a\; b}$ and
 Qing
Wang$^{a}$\footnote[1]{Partially supported by
 China NSF grants (Nos.11531004, 11622107), Natural Science Foundation of Fujian Province
(No. 2016J06002).}\\
$\mbox{}^{a}$School of Mathematical Sciences, Xiamen University, Xiamen 361005,
China\\
$\mbox{}^{b}$Department of Mathematical and Statistical Sciences, University of Alberta,

Edmonton T6G 2G1, Canada}
\end{center}

\begin{abstract}
In this paper, we study a new kind of vertex operator algebra related to the twisted Heisenberg-Virasoro algebra, which we call the twisted Heisenberg-Virasoro vertex operator algebra, and its modules.
 Specifically,
we present some results concerning the relationship between the restricted module categories of twisted Heisenberg-Virasoro algebras of rank one and rank two
and several different kinds of module categories of their corresponding vertex algebras. We also study fully the structures of the twisted Heisenberg-Virasoro vertex operator algebra,
give a characterization
of it as a tensor product of two well-known vertex operator algebras, and solve the commutant problem.
\end{abstract}

\section{Introduction}
\def\theequation{1.\arabic{equation}}
\setcounter{equation}{0}

This paper mainly consists of two parts.
One is the relationship between ($\phi$-coordinated) modules of vertex algebras and restricted modules of two Lie algebras,
the results are good in that we get equivalence of categories of modules,
which may provide new ways of looking at the representation theory of the two Lie algebras as well as the
representation theory of the obtained vertex algebras.
The other is a fulfilled study of vertex operator algebras we obtained.
This kind of vertex operator algebra
 looks like the form of combining Heisenberg and Virasoro vertex operator algebras.
The structure theory and representation theory of vertex operator algebras coming from Heisenberg and Virasoro algebras are well-known and beautiful,
so it is inevitable to consider the corresponding theory of our vertex operator algebras to
prove or disprove our ideas.

The rank one twisted Heisenberg-Virasoro algebra $\mathcal{L}$
was first studied in the paper \cite{ACKP},
it is spanned by the elements
$L_{n},I_{n}$, $c_{1},c_{2},c_{3}$, $n\in\Z $, and the Lie bracket is given by (cf. \cite{B1})
\begin{eqnarray*}
[L_{m}, L_{n}]=(m-n)L_{m+n}+\delta_{m+n,0}\frac{m^{3}-m}{12}c_{1},
\end{eqnarray*}
\begin{eqnarray*}
[L_{m}, I_{n}]=-n I_{m+n}-\delta_{m+n,0}(m^{2}+m)c_{2},
\end{eqnarray*}
\begin{eqnarray*}
[I_{m}, I_{n}]=m\delta_{m+n,0}c_{3},\;\;\; [\mathcal{L}, c_{i}]=0,\; i=1,2,3.
\end{eqnarray*}
With $L_{n}\mapsto -t^{n+1}\frac{d}{dt}$ and $I_{n}\mapsto t^{n}$,
it is the universal central extension of the Lie algebra
of differential operators on a circle of order at most one:
$$\{ f(t)\frac{d}{dt}+g(t)\ | \ f(t),g(t)\in\C[t,t^{-1}] \}.$$
The highest weight modules of $\mathcal{L}$ when $c_{3}$ acts in a nonzero way have been studied in the paper \cite{ACKP},
 when it acts as zero have been studied in the paper \cite{B1}, and $\mathcal{L}$ also has its role in the representation theory of toroidal Lie algebra (\cite{B2}).
Recently, the authors in the paper \cite{AR} give a free field realization of the twisted Heisenberg?Virasoro algebra
and study the representation theory of it when $c_{3}$ acts as zero using vertex-algebraic methods and screening operators.
In our paper, we study the restricted modules of $\mathcal{L}$ using vertex algebra methods and formal variables,
we give a characterization of this type of modules via vertex algebras and corresponding ($\phi$-coordinated)  modules,
where our $c_{1}, c_{2}, c_{3}$ can act as any complex numbers.
And the results are also used to study the irreducible modules of the obtained vertex operator algebras.

In \cite{XLT}, the authors generalized the rank one to rank two case, and call the Lie algebra arising from 2-dimensional torus,
which here we denote it by $\mathcal{L}^{*}$.
More precisely, let $A=\C[t_1^{\pm1},t_2^{\pm1}]$ be the ring of Laurent polynomials in two variables
and $B$ be the set of skew derivations of $A$ spanned by the elements of the form
$$E_{m,n}=t_1^{m}t_2^{n}(n d_1-m d_2),$$
where $(m,n)\in\Z^2$,
and $d_1, d_2$ are degree derivations of $A$.
Set $L=A\oplus B$. Then $L$ becomes a Lie algebra under the Lie bracket relations
\begin{eqnarray*}
[t_{1}^{m}t_{2}^{n}, t_{1}^{r}t_{2}^{s}]=0;
\end{eqnarray*}
\begin{eqnarray*}
[t_{1}^{m}t_{2}^{n}, E_{r,s}]=(nr-ms)t_{1}^{m+r}t_{2}^{n+s};
\end{eqnarray*}
\begin{eqnarray*}
[E_{m,n},E_{r,s}]=(nr-ms)E_{m+r,n+s}.
\end{eqnarray*}

Let $L'$ be the derived Lie subalgebra of $L$. Then
$L'$ is perfect and has a universal central extension $\mathcal{L}^{*}$ with the following Lie bracket relations \cite{XLT}:
\begin{eqnarray*}
[t_{1}^{m}t_{2}^{n}, t_{1}^{r}t_{2}^{s}]=0;\;\;\;[K_{i},\mathcal{L}^{*}]=0,\; i=1,2,3,4;
\end{eqnarray*}
\begin{eqnarray*}
[t_{1}^{m}t_{2}^{n}, E_{r,s}]=(nr-ms)t_{1}^{m+r}t_{2}^{n+s}+\delta_{m+r,0}\delta_{n+s,0}(mK_{1}+nK_{2});
\end{eqnarray*}
\begin{eqnarray*}
[E_{m,n},E_{r,s}]=(nr-ms)E_{m+r,n+s} +\delta_{m+r,0}\delta_{n+s,0}(mK_{3}+nK_{4}).
\end{eqnarray*}
where $(m,n),(r,s)\in \Z^{2}\backslash\{(0,0)\}$, $K_1,K_2,K_3,K_4$ are central elements.

In this paper, we give an association of the restricted modules of $\mathcal{L}^{*}$ with $\phi$-coordinated modules of
corresponding vertex algebra, where again our $K_{1},K_{2},K_{3},K_{4}$ can act
as arbitrary complex numbers.

In a series of papers, the authors use Lie algebras to construct and study vertex algebras, and they also
give the connections between the modules of the Lie algebras and the modules of the corresponding vertex algebras or their likes.
For example, the very beginning study of the association of affine and Virasoro algebras with vertex (operator) algebras (cf. \cite{FZ}),
and further studies like \cite{LL},\cite{Li2}, \cite{Li3}, \cite{DL}, etc.
Later on, many other Lie algebras, like toroidal Lie algebras, quantum torus Lie algebras,
deformed Heisenberg Lie algebras, Lie algebra $\mathfrak{gl}_{\infty}$, elliptic affine Lie algebra,
$q$-Virasoro algebra and unitary Lie algebra
have also been related to vertex algebras or their likes
(see \cite{FZ}, \cite{BBS}, \cite{LTW}, \cite{Li1}, \cite{Li3}, \cite{JL}, \cite{GLTW}, \cite{GW}).

As for the rank one twisted Heisenberg-Virasoro algebra $\mathcal{L}$, it contains both a
Heisenberg subalgebra and a Virasoro subalgebra. In the theory of vertex algebras, we usually write
the generating functions of Virasoro algebra and Heisenberg algebra as
$$L(x)=\sum\limits_{n\in\Z}L_{n}x^{-n-2},\;\; I(x)=\sum\limits_{n\in\Z}I_{n}x^{-n-1},$$
so we first consider these types of generating functions. After writing
the bracket relations in terms of generating functions, we see that the subset
which consists of $L(x)$ and $I(x)$, when acting on a restricted module $W$ of $\mathcal{L}$,
form a local subset (see \cite{Li3}, cf. \cite{LL}).
 So conceptually, it generates
a vertex algebra with $W$ a module under a certain vertex operator operation.
And in this case, the explicit vertex algebra we needed is actually an induced
module constructed from $\mathcal{L}$.
In the process, we observe that when writing the generating functions as the form
$$\widetilde{L}(x)=\sum\limits_{n\in\Z}L_{n}x^{-n},\;\; \widetilde{I}(x)=\sum\limits_{n\in\Z}I_{n}x^{-n},$$
the subset that consists of $\widetilde{L}(x)$ and $\widetilde{I}(x)$, when acting on a restricted module $W$ of $\mathcal{L}$, also forms a local subset,
but under the vertex operator operation which was introduced through the study of quantum
vertex algebras and their corresponding modules (\cite{Li4}, \cite{Li5}), it generates
conceptually a vertex algebra with $W$ being its $\phi$-coordinated module.
And in this case, we use another Lie algebra which is isomorphic to $\mathcal{L}$ to construct
the vertex algebra we needed.

For the rank two twisted Heisenberg-Virasoro algebra $\mathcal{L}^{*}$,
we write the generating functions as
\begin{eqnarray*}
T_{m}(x)=\sum\limits_{n\in\Z}t_{1}^{m}t_{2}^{n}x^{-n},\;\; E_{m}(x)=\sum\limits_{n\in\Z}E_{m,n}x^{-n}.
\end{eqnarray*}
Form the subset
$U_{W}=\{ {\bf 1}_{W}\}\cup\{T_{m}(x),E_{m}(x)\ | \ m\in\Z\}$ with $W$ being a restricted module of $\mathcal{L}^{*}$,
this subset is also a local subset.
And under the context of \cite{Li4} or \cite{Li5}, it generates
 a vertex algebra with $W$ being its $\phi$-coordinated modules.
To associate a vertex algebra to $\mathcal{L}^{*}$ explicitly, we construct
a new affine type Lie algebra $\widehat{\mathfrak{L}^{*}}$ and showed that its induced module $V_{\widehat{\mathfrak{L}^{*}}}(\ell_{1234},0)$ is a
vertex algebra, where $\ell_{1234}\in\C$ (see section 4).
Furthermore, we prove the correspondence between the restricted modules of $\mathcal{L}^{*}$
and $\phi$-coordinated modules of the vertex algebra $V_{\widehat{\mathfrak{L}^{*}}}(\ell_{1234},0)$.

Beside the equivalence between the categories of these two Lie algebra modules and corresponding vertex algebra modules, we study
in detail the new vertex operator algebra $V_{\mathcal{L}}(\ell_{123},0)$ obtained in section 2,
give the characterization of all the irreducible  vertex operator algebra $V_{\mathcal{L}}(\ell_{123},0)$-modules,
 and then
we consider the Zhu's algebra, $C_{2}$-cofiniteness, rationality, regularity, unitary property
of it and its simple descendant, we also consider the commutant of Heisenberg vertex operator algebra in it,
in the process, we give a characterization of it as a tensor product of two other
vertex operator algebras which are equipped with non standard conformal vectors (See section 3 for detail).

Y.Zhu in \cite{Z} constructed an associative algebra $A(V)$ (nowadays it is called Zhu's algebra of $V$) for a general vertex operator algebra $V$ and established a 1-1
correspondence between irreducible representations of $V$ and irreducible representations of $A(V)$.
$C_{2}$-cofiniteness, rationality and regularity are three different but closely related notions, and Zhu's algebra
plays an important role in studying them, since for $V$ being $C_{2}$-cofinite,
rational and regular, $A(V)$ must be a finite dimensional algebra (c.f.\cite{Z}, \cite{DLM2}).
The notion of regularity (which deals with the complete reducibility of weak modules)
was first introduced in the paper \cite{DLM1},
it is a generalization of rationality (which was first introduced in the paper \cite{Z} and deals with the complete reducibility of admissible modules).
Regularity implies rationality by definition, and it was showed in \cite{Li7}
that any regular vertex operator algebra is $C_{2}$-cofinite,
$C_{2}$-cofiniteness and rationality is equivalent to regularity for CFT type vertex operator algebras
has been proved in \cite{ABD}. For our vertex operator algebra $V_{\mathcal{L}}(\ell_{123},0)$,
it is closely related to two kinds of vertex operator algebras, as one may expected, Virasoro
and Heisenberg vertex operator algebras, their $C_{2}$-cofiniteness, rationality and regularity
are well-known (c.f. \cite{DLM1}, \cite{FZ}, \cite{W}).
In section 3, we show in two different ways that the Zhu's algebra
of $V_{\mathcal{L}}(\ell_{123},0)$ is infinite-dimensional, and its simple descendant also turns out to be infinite-dimensional,
 which immediately give that
our $V_{\mathcal{L}}(\ell_{123},0)$ and its simple descendant are not $C_{2}$-cofinite,
not rational and not regular.

 The unitary property of vertex operator algebras has been studied
 in \cite{DLin}, and most well-known vertex operator algebras turn out to be unitary,
 our $V_{\mathcal{L}}(\ell_{123},0)$ is also proved to be unitary under certain conditions.
 Unitarity of a vertex operator algebra is important in that it is the first step that one may want
 to construct conformal nets from vertex operator algebras, where the construction of conformal nets and
 the construction of vertex operator algebras are expected to be equivalent in the sense that you may
 get one from the other.
 The commutant of a vertex subalgebra in a vertex algebra was introduced by Frenkel and Zhu in the paper \cite{FZ},
 it is a generalization of the coset construction considered by Kac-Peterson in representation theory (\cite{KP})
 and Goddard-Kent-Oliver in conformal field theory (\cite{G-K-O}). Describing the generators (or even basis) of a commutant is generally
 a non-trivial problem (c.f. \cite{DW}, \cite{JLin}), the authors in the paper \cite{LAL}
 reducing the problem of describing commutant in an appropriate category of vertex algebras to
a question in commutative algebra, which is a new viewpoint,
here we solve our problem by giving a characterization of our vertex operator algebra as
a tensor product of a Heisenberg
 vertex operator algebra (with nonstandard conformal vector) and a Virasoro vertex operator algebra (constructed using a new conformal vectors).

 This paper is organized as follows: In section 2,
 we first review the definition of rank one twisted Heisenberg-Virasoro algebra $\mathcal{L}$
 and define its restricted modules,
 then we prove that the category of restricted $\mathcal{L}$-modules is equivalent to the category of modules for a specific vertex algebra
and we also present the equivalence between restricted
$\mathcal{L}$-module category and
$\phi$-coordinated module category for certain vertex algebra.
In section 3, we specifically study the structure theory of vertex operator algebra $V_{\mathcal{L}}(\ell_{123},0)$.
In section 4, we study
the relationship between the restricted module category of the rank two twisted Heisenberg-Virasoro Lie algebra $\mathcal{L}^{*}$ and
$\phi$-coordinated module category for a vertex algebra which is constructed based on a new Lie algebra.

\section{Modules and $\phi$-coordinated modules}
\def\theequation{2.\arabic{equation}}
\setcounter{equation}{0}

In this section, we associate the rank one twisted Heisenberg-Virasoro algebra $\mathcal{L}$ with $V_{\mathcal{L}}(\ell_{123},0)$
in terms of vertex algebra with its module and $\phi$-coordinated modules.
More specifically, we show that there is a one-to-one correspondence between the restricted
$\mathcal{L}$-modules of {\em level} $\ell_{123}$
and modules for the vertex algebra $ V_{\mathcal{L}}(\ell_{123},0)$.
And also the category of
restricted $\mathcal{L}$-modules of level $\ell_{123}$ is equivalent to that of
$\phi$-coordinated modules for the vertex algebra $V_{\mathfrak{L}}(\ell_{123},0)$,
where $\mathfrak{L}$ is a Lie algebra that is isomorphic to $\mathcal{L}$.

Throughout this paper, we denote by $\mathbb{N}$, $\Z$, $\C$, $\mathbb{C}^{\times}$ the set of nonnegative integers,
integers, complex numbers, nonzero complex numbers respectively, and
the symbols $x,x_{1},x_{2}\dots $ denote mutually commuting independent formal variables. All vector spaces in this paper are
considered to be over  $\mathbb{C}$. For a vector space $U$, $U((x))$ is the vector space of lower
truncated integral power series in $x$ with coefficients
 in $U$, $U[[x]]$ is the vector space of nonnegative integral
 power series in $x$ with coefficients in $U$, and
$U[[x,x^{-1}]]$ is the vector space of doubly infinite integral
 power series in $x$ with coefficients in $U$ .

\subsection{Basic notions}

For later use, we know from \cite{LL} that
 \begin{eqnarray}
 (x_{1}-x_{2})^{m}(\frac{\partial}{\partial x_{2}})^{n}x_{1}^{-1}
 \delta(\frac{x_{2}}{x_{1}})=0                     \label{eq:2.1}
 \end{eqnarray}
 for $m>n$, $m,n\in\N$, where $\delta(\frac{x_{1}}{x_{2}})=\sum\limits_{n\in\Z}x_{1}^{n}x_{2}^{-n}$.

 For the definition of vertex (operator) algebra and its modules, we follow \cite{LL}.
 Let $W$ be a general vector space. Set \begin{eqnarray}
\mathcal{E}(W)=\mbox{Hom}(W , W((x)))\subset(\mbox{EndW})[[x,x^{-1}]].     \label{eq:2.2}
\end{eqnarray}
The identity operator on $W$, denoted by $\textbf{1}_{W}$, is a special
element of $\mathcal{E}(W).$

The following notion of locality was introduced in \cite{Li3}:
\begin{definition} {\em Formal series $a(x),b(x)\in\mathcal{E}(W)$ are said to be
                 {\em mutually local} if there exists a nonzero polynomial $(x_{1}-x_{2})^{k}$ with $k\in\N$ such that
                 \begin{eqnarray}
                 (x_{1}-x_{2})^{k}a(x_{1})b(x_{2})=(x_{1}-x_{2})^{k}b(x_{2})a(x_{1}).  \label{eq:2.3}
                 \end{eqnarray}
                 A subset (subspace) $U$ of $\mathcal{E}(W)$ is said to be {\em local} if any
                 $a(x),b(x)\in U$ are mutually local.}
\end{definition}

Recall the
basic notions and results on
$\phi$-coordinated modules for vertex algebras (\cite{Li4}).
Set
$$\phi=\phi(x,z)=xe^{z}\in\mathbb{C}((x))[[z]],$$
 which is fixed throughout the paper.

  \begin{definition} {\em Let $V$ be a vertex algebra.
               A {\em $\phi$-coordinated $V$-module} is a vector space $W$ equipped with a
               linear map
               $$Y_{W}(\cdot,x): V\longrightarrow \mathrm{Hom}(W , W((x)))\subset(\mathrm{End W})[[x,x^{-1}]],$$
               satisfying the conditions that $Y_{W}(\textbf{1},x)=\textbf{1}_{W}$
               and that for $u,v\in V$, there exists a nonzero polynomial $(x_{1}-x_{2})^{k}$ with $k\in\N$
               such that
               $$ (x_{1}-x_{2})^{k}Y_{W}(u,x_{1})Y_{W}(v,x_{2})\in \mathrm{Hom}(W,W((x_{1},x_{2})))$$
               and
               $$(x_{2}e^{z}-x_{2})^{k}Y_{W}(Y(u,z)v,x_{2})=((x_{1}-x_{2})^{k}Y_{W}(u,x_{1})Y_{W}(v,x_{2}))|_{x_{1}=x_{2}e^{z}}.$$}
\end{definition}

Let $W$ be a general vector space, $a(x),b(x)\in\mathcal{E}(W)$.
Assume that there exists a nonzero polynomial $p(x)$ such that
\begin{eqnarray}
p(x,z)a(x)b(z)\in \mathrm{Hom}(W,W((x,z))).  \qquad\;\label{eq:3.1}
\end{eqnarray}
Define $a(x)_{n}^{e}b(x)\in \mathcal{E}(W)$ for $n\in\mathbb{Z}$ in terms of
generating function
$$Y_{\mathcal{E}}^{e}(a(x),z)b(x) = \sum_{n\in\mathbb{Z}}(a(x)_{n}^{e}b(x))z^{-n-1}$$
by
$$Y_{\mathcal{E}}^{e}(a(x),z)b(x) = p(xe^{z},x)^{-1}(p(x_{1},x)a(x_{1})b(x))|_{x_{1}=xe^{z}}, $$
where $p(x_{1},x)$ is any nonzero polynomial such that (\ref{eq:3.1}) holds and $p(xe^{z},x)^{-1}$
stands for the inverse of $p(xe^{z},x)$ in $\mathbb{C}((x))((z)).$ (Note that $p(xe^{z},x)$ is a nonzero element in $\C((x))((z))$.)
The definition of $\phi$-coordinated module requires that $p(x,z)$ is of the form $(x-z)^{k}$ with $k\in\N$.

A subspace $U$ of $\mathcal{E}(W)$ such that every ordered pair
satisfies (\ref{eq:3.1}) is said to be {\em $Y_{\mathcal{E}}^{e}$-closed}
if $a(x)_{n}^{e}b(x)\in U$ for $a(x),b(x)\in U$, $n\in\mathbb{Z}$.
We denote by $\langle U\rangle_{e}$ the smallest $Y_{\mathcal{E}}^{e}$-closed
subspace of $\mathcal{E}(W)$ that contains $U$ and $\textbf{1}_{W}.$

The following result was obtained in \cite{Li4} (Theorem 5.4 and Proposition 5.3):

\begin{thm}\label{coordinated}
Let $U$ be a local subset of $\mathcal{E}(W)$. Then
  $(\langle U\rangle_{e},Y_{\mathcal{E}}^{e},\textbf{1}_{W})$ carries the structure of a vertex algebra and  $W$ is a
  $\phi$-coordinated $\langle U\rangle_{e}$-module with
 $Y_{W}(a(x),z) = a(z)$
  for $a(x)\in\langle U\rangle_{e}.$
\end{thm}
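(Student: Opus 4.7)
The plan is to adapt the local-system/Dong-lemma machinery (due to Li in the ordinary setting) to the $\phi$-coordinated context, where the central new feature is the substitution $x_1 = xe^z$: in place of the annihilating factor $z^k$ one works with $(xe^z - x)^k = x^k(e^z - 1)^k$, which is a nonzero, hence invertible, element of $\C((x))((z))$.

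The first point to settle is that $Y_{\E}^{e}(a(x),z)b(x)$ is a well-defined element of $\E(W)((z))$, so that each component $a(x)_n^e b(x)$ genuinely lies in $\E(W)$. Given locality $(x_1-x_2)^k a(x_1)b(x_2) \in \mathrm{Hom}(W, W((x_1,x_2)))$, the substitution $x_1 \mapsto xe^z$ yields a series lying in $\mathrm{Hom}(W, W((x)))[[z]]$, and multiplication by the inverse of $(xe^z-x)^k$ in $\C((x))((z))$ lands in $\E(W)((z))$. Independence from the choice of $k$ (and from the particular annihilating polynomial $p$) reduces to a cancellation computation in $\C((x))((z))$.

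The core technical step is a Dong-lemma analogue for $Y_{\E}^{e}$: if $a(x), b(x), c(x) \in \E(W)$ are pairwise local, then $a(x)_n^e b(x)$ is local with $c(x)$ for every $n\in\Z$, and $\langle U\rangle_e$ is therefore a local, $Y_{\E}^{e}$-closed subspace. The argument mimics the classical Dong lemma but requires careful tracking of the three locality polynomials under the substitution $x_1 \mapsto x_2 e^z$; a decomposition of the form $(x_1-x_2)^N = \sum_{i+j=N}\binom{N}{i}(x_1-x_3)^i(x_3-x_2)^j$ for $N$ sufficiently large produces, in each summand, a factor annihilated by two of the three pairwise locality relations, after which the $\phi$-substitution can be performed cleanly. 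This is the main obstacle of the proof.

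Once $\langle U\rangle_e$ is established as a $Y_{\E}^{e}$-closed local subspace, the vertex algebra axioms follow via the standard characterization by vacuum, creation, and weak associativity. The vacuum and creation identities reduce to direct computations with $\textbf{1}_W$, since $\textbf{1}_W$ commutes with every $a(x)\in\E(W)$. The key weak-associativity identity
\[
(x_2 e^z - x_2)^k\, Y_{\E}^{e}\!\bigl(Y_{\E}^{e}(a,z)b,\, x_2\bigr)c = \bigl((x_1-x_2)^k Y_{\E}^{e}(a,x_1)Y_{\E}^{e}(b,x_2)c\bigr)\big|_{x_1 = x_2 e^z}
\]
is derived by unwinding both sides according to the definition of $Y_{\E}^{e}$ and invoking the Dong-style bound to legitimize the substitution. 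Combined with locality in $\langle U\rangle_e$ this yields the Jacobi identity, so $\langle U\rangle_e$ is a vertex algebra. The final claim, that $W$ is a $\phi$-coordinated $\langle U\rangle_e$-module with $Y_W(a(x),z) = a(z)$, is then essentially tautological: the displayed weak-associativity relation is exactly the $\phi$-coordinated module axiom under the identification $a(x) \leftrightarrow a(z)$, and the vacuum condition $Y_W(\textbf{1}_W,z) = \textbf{1}_W$ holds by construction.
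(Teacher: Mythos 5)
First, a point of reference: the paper does not prove this theorem at all --- it is quoted from Li's work (Theorem 5.4 and Proposition 5.3 of \cite{Li4}), so there is no in-paper proof to compare against, and your proposal has to be judged against the cited argument. Your outline follows the right general strategy (well-definedness of $Y_{\mathcal{E}}^{e}$, a Dong-type closure lemma, then the axioms), but it contains a genuine conflation at the decisive step. The theorem asserts that $(\langle U\rangle_{e},Y_{\mathcal{E}}^{e},\textbf{1}_{W})$ is an \emph{ordinary} vertex algebra, so the weak associativity you must verify for the algebra itself is the ordinary one, $(x_{0}+x_{2})^{k}Y_{\mathcal{E}}^{e}(Y_{\mathcal{E}}^{e}(a,x_{0})b,x_{2})c=(x_{0}+x_{2})^{k}Y_{\mathcal{E}}^{e}(a,x_{0}+x_{2})Y_{\mathcal{E}}^{e}(b,x_{2})c$, in the formal variables of the new vertex operators. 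The identity you display instead --- with the substitution $x_{1}=x_{2}e^{z}$ and the factor $(x_{2}e^{z}-x_{2})^{k}$, applied with $Y_{\mathcal{E}}^{e}$ on both sides --- is the $\phi$-coordinated \emph{module} axiom; proving it for $a,b,c\in\langle U\rangle_{e}$ would show that $\langle U\rangle_{e}$ is a $\phi$-coordinated module over itself, which is not the claim, and combining it with locality does not produce the ordinary Jacobi identity. The nontrivial content of Li's theorem is exactly this asymmetry: although the product $Y_{\mathcal{E}}^{e}$ is defined through the exponential substitution on $W$, the resulting operators $Y_{\mathcal{E}}^{e}(a(x),z_{1})$ and $Y_{\mathcal{E}}^{e}(b(x),z_{2})$ acting on $\langle U\rangle_{e}$ satisfy ordinary locality in $z_{1},z_{2}$ (this is what Proposition 5.3 of \cite{Li4} supplies), after which the ordinary local-system theorem gives the vertex algebra structure; it is the module statement for $W$ that is then nearly tautological from the definition of $Y_{\mathcal{E}}^{e}$. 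Your proposal has these two roles reversed.

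A second gap: the Dong-lemma analogue, which you correctly flag as the main obstacle, is only gestured at. Under the substitution $x_{1}\mapsto xe^{z}$ the annihilating factors become $x^{k}(e^{z}-1)^{k}$ rather than powers of a linear form, so the classical binomial decomposition of $(x_{1}-x_{2})^{N}$ does not interact with the $\phi$-substitution "cleanly" as asserted; Li's actual treatment routes the closure and well-definedness through the machinery of quasi-compatible subsets and nonlocal vertex algebras rather than a direct Dong-lemma computation. As written, the sentence claiming that each summand acquires a factor killed by two of the three locality relations, after which the substitution goes through, is precisely the statement that needs proof and is where the bulk of the work in the cited reference lives.
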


Now we are in a position to study the twisted Heisenberg-Virasoro algebra in terms of vertex algebra
with its module and its $\phi$-coordinated modules.

\subsection{Modules}

Firstly, we give the definition of
the rank one twisted Heisenberg-Virasoro algebra $\mathcal{L}$ (see \cite{ACKP} or \cite{B1}).

\begin{definition}\label{Twisted Heisenberg-Virasoro algebra}
{\em The rank one twisted Heisenberg-Virasoro algebra $\mathcal{L}$ is a Lie algebra with the basis
$\{L_{n},I_{n},c_{1},c_{2},c_{3}|n\in\Z  \}$, and the following Lie brackets:
\begin{eqnarray}
[L_{m}, L_{n}]=(m-n)L_{m+n}+\delta_{m+n,0}\frac{m^{3}-m}{12}c_{1},  \label{eq:2.4}
\end{eqnarray}
\begin{eqnarray}
[L_{m}, I_{n}]=-n I_{m+n}-\delta_{m+n,0}(m^{2}+m)c_{2},   \label{eq:2.5}
\end{eqnarray}
\begin{eqnarray}
[I_{m}, I_{n}]=m\delta_{m+n,0}c_{3},\;\;\; [\mathcal{L}, c_{i}]=0,\; i=1,2,3.   \label{eq:2.6}
\end{eqnarray}
}
\end{definition}

Clearly, $\mbox{Span}\{L_{n},\;c_{1}\ | \ n\in\Z\}$ is a Virasoro algebra, $\mbox{Span}\{I_{n},\;c_{3}\ | \ n\in\Z\}$ is
an infinite-dimensional Heisenberg algebra, we denote them by $Vir$, $\mathcal{H}$ respectively.

Form the generating functions as
$$L(x)=\sum\limits_{n\in\Z}L_{n}x^{-n-2},\;\; I(x)=\sum\limits_{n\in\Z}I_{n}x^{-n-1},$$
then the defining relations of $\mathcal{L}$ become to be
\begin{eqnarray}
&&{}[L(x_{1}), L(x_{2})]  \nonumber \\
&&{}=\sum\limits_{m,n\in\Z}(m-n)L_{m+n}x_{1}^{-m-2}x_{2}^{-n-2}+\sum\limits_{m\in\Z}\frac{m^{3}-m}{12}c_{1}x_{1}^{-m-2}x_{2}^{m-2} \nonumber \\
 &&{}=L^{'}(x_{2})x_{1}^{-1}\delta\left(\frac{x_{2}}{x_{1}}\right)
          + 2L(x_{2})\frac{\partial}{\partial x_{2}}x_{1}^{-1}\delta\left(\frac{x_{2}}{x_{1}}\right)
          +\frac{c_{1}}{12}\left(\frac{\partial}{\partial x_{2}}\right)^{3}x_{1}^{-1}\delta\left(\frac{x_{2}}{x_{1}}\right),    \label{eq:2.7}
 \end{eqnarray}

\begin{eqnarray}
&&{}[L(x_{1}), I(x_{2})]  \nonumber \\
&&{}=-\sum\limits_{m,n\in\Z}n I_{m+n}x_{1}^{-m-2}x_{2}^{-n-1}-\sum\limits_{m\in\Z}(m^{2}+m)c_{2}x_{1}^{-m-2}x_{2}^{m-1}  \nonumber \\
 &&{}=I^{'}(x_{2})x_{1}^{-1}\delta\left(\frac{x_{2}}{x_{1}}\right)
          + I(x_{2})\frac{\partial}{\partial x_{2}}x_{1}^{-1}\delta\left(\frac{x_{2}}{x_{1}}\right)
          -\left(\frac{\partial}{\partial x_{2}}\right)^{2}x_{1}^{-1}\delta\left(\frac{x_{2}}{x_{1}}\right)c_{2},    \label{eq:2.8}
 \end{eqnarray}
 \begin{eqnarray}
[I(x_{1}), I(x_{2})]
=\sum\limits_{m\in\Z}m c_{3}x_{1}^{-m-1}x_{2}^{m-1}
=\frac{\partial}{\partial x_{2}}x_{1}^{-1}\delta\left(\frac{x_{2}}{x_{1}}\right)c_{3},    \label{eq:2.9}
 \end{eqnarray}
 where $L^{'}(x)=\frac{d}{dx}(L(x)), I^{'}(x)=\frac{d}{dx}(I(x))$.

 We give the following definition.
 \begin{definition}\label{restricted and level}
{\em An $\mathcal{L}$-module $W$ is said to be {\em restricted} if for any $w\in W, n\in\Z$,
$L_{n}w=0$ and $I_{n}w=0$ for $n$ sufficiently large,
or equivalently, if $L(x), I(x)\in {\mathcal{E}}(W)$.
We say an $\mathcal{L}$-module $W$ is of {\em level} $\ell_{123}$ if the central element $c_{i}$ acts as
scalar $\ell_{i}$ for $i=1,2,3.$}
\end{definition}

Recall (\cite{LL}) that a Lie algebra $\mathfrak{g}$
equipped with a $\Z$-grading $\mathfrak{g}=\coprod_{n\in\Z}\mathfrak{g}_{(n)}$ is called a {\em $\Z$-graded Lie algebra} if
\begin{eqnarray*}
[\mathfrak{g}_{(m)},\mathfrak{g}_{(n)}]\subset\mathfrak{g}_{(m+n)}\;\;\mbox{for}\;m,n\in\Z.
\end{eqnarray*}
A subalgebra $\mathfrak{h}$
of a $\Z$-graded Lie algebra $\mathfrak{g}$
is called a {\em graded subalgebra} if
\begin{eqnarray*}
\mathfrak{h}=\coprod_{n\in\Z}\mathfrak{h}_{(n)},\;\mbox{where}\;\mathfrak{h}_{(n)}=\mathfrak{h}\cap \mathfrak{g}_{(n)}\;\mbox{for}\;n\in\Z.
\end{eqnarray*}
In particular, $\mathfrak{g}_{(0)}$, $\mathfrak{g}_{(\pm)}=\cup_{n\geq 1}\mathfrak{g}_{(\pm n)}$ and
$\mathfrak{g}_{(0)}\oplus \mathfrak{g}_{(\pm)}$ are graded subalgebras.

For the twisted Heisenberg-Virasoro algebra $\mathcal{L}$,
consider the following $\Z$-grading on $\mathcal{L}$
\begin{eqnarray}
\mathcal{L}=\coprod_{n\in\Z}\mathcal{L}_{(n)},
\end{eqnarray}
where
$$\mathcal{L}_{(0)}=\C L_{0}\oplus \C I_{0}\oplus \sum_{i=1}^{3}\C c_{i},\;\;\mbox{and}\;\;\mathcal{L}_{(n)}=\C L_{-n}\oplus \C I_{-n}\;\mbox{for}\;n\neq 0,$$
it makes $\mathcal{L}$ a $\Z$-graded Lie algebra, and this grading is given by ad $L_{0}$-eigenvalues.
Then we have the graded subalgebras
$$\mathcal{L}_{(-)}=\coprod_{n\geq 1}\mathcal{L}_{(-n)}=\coprod_{n\geq 1}\C L_{n}\oplus\coprod_{n\geq 1}\C I_{n},$$
$$\mathcal{L}_{(+)}=\coprod_{n\geq 1}\mathcal{L}_{(n)}=\coprod_{n\geq 1}\C L_{-n}\oplus\coprod_{n\geq 1}\C I_{-n}$$
and also $\mathcal{L}_{(0)}\oplus \mathcal{L}_{(-)}$ and $\mathcal{L}_{(0)}\oplus \mathcal{L}_{(+)}$.
 We also have the graded subalgebras
 \begin{eqnarray}
 \mathcal{L}_{(\leq 1)}=\coprod_{n\leq 1}\C L_{-n}\oplus\coprod_{n\leq 0}\C I_{-n}\oplus\sum_{i=1}^{3}\C c_{i},  \label{eq:2.10}
 \end{eqnarray}
 \begin{eqnarray}
 \mathcal{L}_{(\geq 2)}=\coprod_{n\geq 2}\C L_{-n}\oplus\coprod_{n\geq 1}\C I_{-n},        \label{eq:2.11}
 \end{eqnarray}
 and the decomposition
 \begin{eqnarray}
 \mathcal{L}=\mathcal{L}_{(\leq 1)}\oplus \mathcal{L}_{(\geq 2)}.                  \label{eq:2.12}
 \end{eqnarray}

Let $\ell_{i},i=1,2,3,$ be any complex numbers.
Consider $\C$ as an $\mathcal{L}_{(\leq 1)}$-module with $c_{i}$ acting
as the scalar $\ell_{i},i=1,2,3,$ and with $\coprod_{n\leq 1}\C L_{-n}\oplus\coprod_{n\leq 0}\C I_{-n}$
acting trivially. Denote this $\mathcal{L}_{(\leq 1)}$-module by $\C_{\ell_{123}}$.
Form the induced module
\begin{eqnarray}
V_{\mathcal{L}}(\ell_{123},0)=U(\mathcal{L})\otimes_{U(\mathcal{L}_{(\leq 1)})}\C_{\ell_{123}},   \label{eq:2.13}
\end{eqnarray}
where $U(\cdot)$ denotes the universal enveloping algebra of a Lie algebra.

Set ${\bf 1} =1\otimes 1\in V_{\mathcal{L}}(\ell_{123},0)$.
Define a linear operator $\overline{d}$ on $\mathcal{L}$
by
$$\overline{d}(c_{i})=0, \;\;\mbox{for}\; i=1,2,3,$$
$$\overline{d}(L_{n})=-(n+1)L_{n-1},\;\;\mbox{and}\;\; \overline{d}(I_{n})=-nI_{n-1}.$$
It is easy to check that $\overline{d}$ is a derivation of the twisted Heisenberg-Virasoro algebra $\mathcal{L}$, so
that $\overline{d}$ naturally extends to a derivation of the associative algebra $U(\mathcal{L})$.
Clearly, $\overline{d}$ preserves the subspace
$\coprod_{n\leq 1}\C L_{-n}\oplus\coprod_{n\leq 0}\C I_{-n} \oplus\sum\limits_{i=1}^{3}\C (c_{i}-\ell_{i})$
of $U(\mathcal{L})$.
Since as a (left) $U(\mathcal{L})$-module,
$$V_{\mathcal{L}}(\ell_{123},0)
\cong U(\mathcal{L})/U(\mathcal{L})
(\coprod_{n\leq 1}\C L_{-n}\oplus\coprod_{n\leq 0}\C I_{-n} \oplus\sum\limits_{i=1}^{3}\C (c_{i}-\ell_{i})),$$
it follows that $\overline{d}$ induces a linear operator on $V_{\mathcal{L}}(\ell_{123},0)$, which we denote by $d$.
Then we have
$$d({\bf 1})=0,\;\;\mbox{and}\;\; [d,L(x)]=\frac{d}{d x}L(x),\;\;[d,I(x)]=\frac{d}{d x}I(x).$$

From (\ref{eq:2.7}) to (\ref{eq:2.9}), using (\ref{eq:2.1}), we see that
\begin{eqnarray*}
(x_{1}-x_{2})^{4}[L(x_{1}), L(x_{2})] =0,\;\;(x_{1}-x_{2})^{3}[L(x_{1}), I(x_{2})] =0,\;\;(x_{1}-x_{2})^{2}[I(x_{1}), I(x_{2})] =0.
\end{eqnarray*}

By the Poincare-Birkhoff-Witt theorem, as a vector space we have
$$V_{\mathcal{L}}(\ell_{123},0)=U( \mathcal{L}_{(\geq 2)})\simeq S( \mathcal{L}_{(\geq 2)}).$$
And
\begin{eqnarray*}
V_{\mathcal{L}}(\ell_{123},0)=\coprod_{n\geq 0}V_{\mathcal{L}}(\ell_{123},0)_{(n)},
\end{eqnarray*}
where $V_{\mathcal{L}}(\ell_{123},0)_{(0)}=\C_{\ell_{123}}$ and $V_{\mathcal{L}}(\ell_{123},0)_{(n)}$, $n\geq 1$, has a basis consisting of the vectors
$$I_{-k_{1}}\cdots I_{-k_{s}}L_{-m_{1}}\cdots L_{-m_{r}}{\bf{1}} $$
for
$r, s\geq 0$,
$m_{1}\geq\cdots\geq m_{r}\geq 2$, $k_{1}\geq\cdots\geq k_{s}\geq 1$ with $\sum\limits_{i=1}^{r}m_{i}+\sum\limits_{j=1}^{s}k_{j}=n.$

Then by Theorem 5.7.1 of \cite{LL} we get that
\begin{thm}\label{VA1}
$V_{\mathcal{L}}(\ell_{123},0)$ is a vertex algebra,
which is uniquely determined by the condition that ${\bf 1}$ is the vacuum vector and
\begin{eqnarray}
Y(L_{-2}{\bf 1},x)=L(x)\;\left(=\sum\limits_{n\in\Z}L_{n}x^{-n-2}\right),       \label{eq:2.14}
\end{eqnarray}
\begin{eqnarray}
Y(I_{-1}{\bf 1},x)=I(x)\;\left(=\sum\limits_{n\in\Z}I_{n}x^{-n-1}\right).    \label{eq:2.15}
\end{eqnarray}
The vertex operator map $Y$ for this vertex algebra structure is given by
$$Y(I_{m_{1}}\cdots I_{m_{s}}L_{n_{1}}\cdots L_{n_{r}}{\bf 1},x)=I(x)_{m_{1}}\cdots I(x)_{m_{s}}L(x)_{n_{1}+1}\cdots L(x)_{n_{r}+1}{\bf 1}$$
for $r, s\geq 0$
and $n_{1},\cdots,n_{r},m_{1},\cdots,m_{s}\in\Z.$ Furthermore, $T=\{L_{-2}{\bf 1},I_{-1}{\bf 1}\}$ is the generating subset of
$V_{\mathcal{L}}(\ell_{123},0)$.
\end{thm}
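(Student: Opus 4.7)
The plan is to apply Theorem 5.7.1 of \cite{LL}, which packages a local system of generating fields on a suitable induced module into a vertex algebra structure. The preparatory material built up above is organized precisely so that its hypotheses are met, so the argument reduces to a checklist of verifications.

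First I would confirm that $\{L(x),I(x)\}$ is a mutually local subset of $\mathcal{E}(V_{\mathcal{L}}(\ell_{123},0))$ in the sense of Definition 2.1. Since the PBW realization $V_{\mathcal{L}}(\ell_{123},0)\simeq S(\mathcal{L}_{(\geq 2)})$ forces any element to be annihilated by all but finitely many of the $L_{n}$, $I_{n}$ with $n$ positive, the module is restricted and the generating fields genuinely land in $\mathcal{E}(V_{\mathcal{L}}(\ell_{123},0))$. Mutual locality, with concrete exponents $4$, $3$, $2$, follows from the bracket identities (\ref{eq:2.7})--(\ref{eq:2.9}) multiplied by appropriate powers of $(x_{1}-x_{2})$ and killed using (\ref{eq:2.1}); these bounds have in fact already been recorded just above the theorem statement.

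Next I would check the remaining inputs required by \cite[Thm.~5.7.1]{LL}: the action on the vacuum and the translation-covariance derivation. The relations $L(x){\bf 1},I(x){\bf 1}\in V_{\mathcal{L}}(\ell_{123},0)[[x]]$ follow because the annihilating subalgebra $\mathcal{L}_{(\leq 1)}$ contains $L_{n}$ for $n\geq -1$ and $I_{n}$ for $n\geq 0$, so only nonnegative powers of $x$ survive when these fields act on ${\bf 1}$. The derivation $d$ constructed above satisfies $d({\bf 1})=0$ together with $[d,L(x)]=\frac{d}{dx}L(x)$ and $[d,I(x)]=\frac{d}{dx}I(x)$, supplying the required $L_{-1}$-type operator.

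With these ingredients in place, Theorem 5.7.1 of \cite{LL} directly produces the stated vertex algebra structure (with vacuum ${\bf 1}$ and the prescribed vertex operators for $L_{-2}{\bf 1}$ and $I_{-1}{\bf 1}$), its uniqueness, and the explicit formula for $Y$ in terms of iterated $n$-th products of $L(x)$ and $I(x)$. The final claim that $T=\{L_{-2}{\bf 1},I_{-1}{\bf 1}\}$ generates $V_{\mathcal{L}}(\ell_{123},0)$ is then immediate from the PBW basis exhibited earlier, since each basis vector $I_{-k_{1}}\cdots I_{-k_{s}}L_{-m_{1}}\cdots L_{-m_{r}}{\bf 1}$ is realized by that explicit formula as an iterated $n$-th product of the two generators applied to ${\bf 1}$. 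The only step that is not a bookkeeping exercise is the locality verification, but that has already been performed from the bracket relations of $\mathcal{L}$, so in practice there is no substantial obstacle.
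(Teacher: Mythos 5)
Your proposal matches the paper's argument: the paper assembles exactly the same ingredients (the restricted-module/PBW structure, the locality exponents $4,3,2$ derived from (\ref{eq:2.7})--(\ref{eq:2.9}) via (\ref{eq:2.1}), and the derivation $d$ with $d({\bf 1})=0$ and $[d,L(x)]=\tfrac{d}{dx}L(x)$, $[d,I(x)]=\tfrac{d}{dx}I(x)$) and then invokes Theorem 5.7.1 of \cite{LL} to conclude. No substantive difference.
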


{\bf Convention:} In our paper, for numbers of the form $n_{1},\ldots,n_{r}$, we say $r\geq 0$,
where $r=0$ means the element with subscript $n_{i}$'s do not appear.

In the following, we denote by $\omega=L_{-2}{\bf 1}$, $I=I_{-1}{\bf 1}$, and $\omega^{'}=\frac{1}{2\ell_{3}}I_{-1}I_{-1}{\bf 1}$,
note we have $\omega_{n}=L_{n-1}$, $(I)_{n}=(I_{-1}{\bf 1})_{n}=I_{n}$ (this is why we denote by $I_{-1}{\bf 1}$ the symbol $I$), for all $n\in\Z.$

\begin{rmk}\label{universal1}
{\em
As a module for the twisted Heisenberg-Virasoro algebra, $V_{\mathcal{L}}(\ell_{123},0)$ is generated by ${\bf 1}$ with the
relations $c_{i}=\ell_{i}$ and $L_{n}{\bf 1}=I_{m}{\bf 1}=0$ for $n\geq -1, m\geq 0,$ $i=1,2,3,$
and in fact $V_{\mathcal{L}}(\ell_{123},0)$ is {\em universal} in the sense that for any module $W$ of the
twisted Heisenberg-Virasoro algebra $\mathcal{L}$
of level $\ell_{123}$ equipped with a vector $e\in W$ such that $L_{n}e=I_{m}e=0$ for $n\geq -1, m\geq 0,$
there exists a unique $\mathcal{L}$-module homomorphism from $V_{\mathcal{L}}(\ell_{123},0)$
to $W$ sending ${\bf 1}$ to $e$.
}
\end{rmk}

We now use Theorem 5.5.18 and Theorem 5.7.6 of \cite{LL} to prove that:
\begin{thm}\label{Mod1}
Let $W$ be any restricted module for the rank one twisted Heisenberg-Virasoro algebra $\mathcal{L}$
of level $\ell_{123}.$ Then there exists a unique module structure on $W$ for
$V_{\mathcal{L}}(\ell_{123},0)$ viewed as a vertex algebra such that
\begin{eqnarray}
Y_{W}(L_{-2}{\bf 1},x)=L(x)\;\left(=\sum\limits_{n\in\Z}L_{n}x^{-n-2}\right),   \label{eq:2.16}
\end{eqnarray}
\begin{eqnarray}
Y_{W}(I_{-1}{\bf 1},x)=I(x)\;\left(=\sum\limits_{n\in\Z}I_{n}x^{-n-1}\right).       \label{eq:2.17}
\end{eqnarray}
The vertex operator map $Y_{W}$ for this module structure is given by
\begin{eqnarray}
Y_{W}(I_{m_{1}}\cdots I_{m_{s}}L_{n_{1}}\cdots L_{n_{r}}{\bf 1},x)
=I(x)_{m_{1}}\cdots I(x)_{m_{s}}L(x)_{n_{1}+1}\cdots L(x)_{n_{r}+1}{\bf 1}_{W} \label{eq:2.18}
\end{eqnarray}
for $r\geq 0, s\geq 0$ and $n_{1},\cdots,n_{r},m_{1},\cdots,m_{s}\in\Z.$
\end{thm}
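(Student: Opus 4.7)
The plan is to promote the restricted $\mathcal{L}$-action on $W$ to a $V_{\mathcal{L}}(\ell_{123},0)$-module structure by constructing an intermediate vertex algebra inside $\mathcal{E}(W)$ and then using the universal property of $V_{\mathcal{L}}(\ell_{123},0)$ recorded in Remark \ref{universal1}. First, since $W$ is restricted, both $L(x)$ and $I(x)$ lie in $\mathcal{E}(W)$, so the subset
\[
U_{W}=\{\mathbf{1}_{W},\,L(x),\,I(x)\}\subset \mathcal{E}(W)
\]
is well defined. From the bracket formulas \eqref{eq:2.7}--\eqref{eq:2.9}, rewritten on $W$ using the scalar actions $c_{i}=\ell_{i}$, together with the delta-function identity \eqref{eq:2.1}, one sees
\[
(x_{1}-x_{2})^{4}[L(x_{1}),L(x_{2})]=0,\quad
(x_{1}-x_{2})^{3}[L(x_{1}),I(x_{2})]=0,\quad
(x_{1}-x_{2})^{2}[I(x_{1}),I(x_{2})]=0,
\]
so $U_{W}$ is a local subset of $\mathcal{E}(W)$.

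Next I would invoke Theorem 5.5.18 of \cite{LL} (the Li correspondence for local subsets of $\mathcal{E}(W)$): the subset $U_{W}$ generates a vertex algebra $\langle U_{W}\rangle\subset \mathcal{E}(W)$ for which $W$ is a canonical module via $Y_{W}(a(x),x_{0})=a(x_{0})$ for $a(x)\in\langle U_{W}\rangle$. I would then use Theorem 5.7.6 of \cite{LL} to translate the three locality bounds above into explicit $n$-product formulas, and read off from these that the modes of $L(x)$ and $I(x)$ inside $\langle U_{W}\rangle$ satisfy exactly the defining commutation relations of $\mathcal{L}$ at level $\ell_{123}$. In other words, $\langle U_{W}\rangle$ is itself a restricted $\mathcal{L}$-module of level $\ell_{123}$, with highest-weight-type vector $\mathbf{1}_{W}$ annihilated by $L_{n}$ for $n\geq -1$ and $I_{m}$ for $m\geq 0$.

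Applying the universal property from Remark \ref{universal1} to this vector produces a unique $\mathcal{L}$-module homomorphism
\[
\pi\colon V_{\mathcal{L}}(\ell_{123},0)\longrightarrow \langle U_{W}\rangle,\qquad
\mathbf{1}\mapsto \mathbf{1}_{W},
\]
which automatically satisfies $\pi(L_{-2}\mathbf{1})=L(x)$ and $\pi(I_{-1}\mathbf{1})=I(x)$. Because both sides carry vertex algebra structures and $\pi$ matches the generators of $V_{\mathcal{L}}(\ell_{123},0)$ given by Theorem \ref{VA1}, a standard induction on the PBW filtration using the $n$-product expression
\[
Y(I_{m_{1}}\cdots I_{m_{s}}L_{n_{1}}\cdots L_{n_{r}}\mathbf{1},x)
=I(x)_{m_{1}}\cdots I(x)_{m_{s}}L(x)_{n_{1}+1}\cdots L(x)_{n_{r}+1}\mathbf{1}
\]
shows $\pi$ is a vertex algebra homomorphism. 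Composing $\pi$ with $Y_{W}(\cdot,x)$ on $\langle U_{W}\rangle$ defines the desired $V_{\mathcal{L}}(\ell_{123},0)$-module structure on $W$ and yields the formula \eqref{eq:2.18} on PBW monomials.

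The uniqueness assertion is immediate once the structure map is forced on the generators $L_{-2}\mathbf{1}$ and $I_{-1}\mathbf{1}$: since by Theorem \ref{VA1} these two vectors generate $V_{\mathcal{L}}(\ell_{123},0)$, any module vertex operator map is determined by its values \eqref{eq:2.16}--\eqref{eq:2.17} through iterated $n$-products. The main technical obstacle I anticipate is the second paragraph, namely verifying carefully that the abstract $n$-products obtained from Theorem 5.7.6 of \cite{LL} reproduce the twisted Heisenberg-Virasoro brackets with the correct central terms $\ell_{1},\ell_{2},\ell_{3}$; this amounts to matching the coefficient of the $\delta$-function derivatives in \eqref{eq:2.7}--\eqref{eq:2.9} against the singular part coming from locality and the Jacobi identity, which is routine but requires care in the mixed $[L,I]$ case where the derivative of $\delta$ produces the $\ell_{2}$-central term.
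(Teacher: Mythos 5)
Your proposal is correct and follows essentially the same route as the paper: form the local subset $U_{W}=\{{\bf 1}_{W},L(x),I(x)\}$, generate the vertex algebra $\langle U_{W}\rangle\subset\mathcal{E}(W)$ with $W$ as its tautological module, use the universal property of Remark \ref{universal1} to obtain the $\mathcal{L}$-module map $V_{\mathcal{L}}(\ell_{123},0)\to\langle U_{W}\rangle$, and transport the module structure back to $W$ (the paper packages your last two paragraphs into a single citation of Theorem 5.7.6 of \cite{LL}). The only nitpick is a citation slip: verifying that the modes of $L(x),I(x)$ inside $\langle U_{W}\rangle$ satisfy the $\mathcal{L}$-relations uses the commutator formula for $Y_{\mathcal{E}}$ (Proposition 5.6.7 of \cite{LL}) rather than Theorem 5.7.6, but you correctly identify the computation that is needed.
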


\begin{proof}
Set
\begin{eqnarray*}
U_{W}=\{L(x) ,I(x),{\bf 1}_{W} \},
\end{eqnarray*}
then $U_{W}$ is a local subset of $\mathcal{E}(W)$, by Theorem 5.5.18 of \cite{LL},
$U_{W}$ generates a vertex algebra $\langle U_{W}\rangle$ with $W$ a natural faithful module.
Furthermore, $\langle U_{W}\rangle$ is the linear span of the elements of the form
$$a^{(1)}(x)_{n_{1}}\cdots a^{(r)}(x)_{n_{r}}{\bf 1}_{W}$$ for $a^{(i)}(x)\in U_{W},$ $n_{1},\ldots,n_{r}\in\Z$ with $r\geq 0.$
$\langle U_{W}\rangle$ is a $\mathcal{L}$-module
with $ L_{n}, I_{n}$ acting as $L(x)_{n+1}, I(x)_{n}$ for $n\in\Z$, so that $L(x)_{n}{\bf 1}_{W}=0$, $I(x)_{n}{\bf 1}_{W}=0$
for $n\geq 0$.
In view of Remark \ref{universal1}, there exists a unique $\mathcal{L}$-module map $\psi$ from
$V_{\mathcal{L}}(\ell_{123},0)$ to $\langle U_{W}\rangle$ such that $\psi({\bf 1})={\bf 1}_{W}$.
Then
\begin{eqnarray*}
\psi(\omega_{n}v)= L(x)_{n}\psi(v) \;\;\mbox{and}\; \psi(I_{n}v)= I(x)_{n}\psi(v),\;\;\mbox{for}\; n\in\Z, v\in V_{\mathcal{L}}(\ell_{123},0).
\end{eqnarray*}
The existence and uniqueness of $V_{\mathcal{L}}(\ell_{123},0)$-module structure on $W$ now immediately
follows from Theorem 5.7.6 of \cite{LL} with $T=\{\omega, I\}$.
\end{proof}

On the other hand, we have:
\begin{thm}\label{Mod2}
Every module $W$ for $V_{\mathcal{L}}(\ell_{123},0)$ viewed as a vertex algebra is naturally a
restricted module for the rank one twisted Heisenberg-Virasoro algebra $\mathcal{L}$ of level $\ell_{123}$,
with $L(x)=Y_{W}(L_{-2}{\bf 1},x)$, $I(x)=Y_{W}(I_{-1}{\bf 1},x)$.
\end{thm}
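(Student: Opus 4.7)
The plan is to invoke the standard module commutator formula for vertex algebras and reduce everything to computing a handful of products $u_i v$ inside $V_{\mathcal{L}}(\ell_{123},0)$. First, I would define $L_n,I_n \in \mathrm{End}(W)$ as the coefficients in the expansions $Y_W(L_{-2}\mathbf{1},x)=\sum_n L_n x^{-n-2}$ and $Y_W(I_{-1}\mathbf{1},x)=\sum_n I_n x^{-n-1}$. Since by the very definition of a $V$-module we have $Y_W(v,x)\in \mathrm{Hom}(W,W((x)))$ for every $v$, each series is truncated from below in negative powers of $x$, so $L_n w=I_n w=0$ for $n$ sufficiently large; this is precisely the restricted condition.

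Next, to verify the Lie brackets I would apply the commutator formula for vertex algebra modules,
\begin{equation*}
[Y_W(u,x_1),Y_W(v,x_2)]=\sum_{i\ge 0}\frac{1}{i!}\,Y_W(u_i v,x_2)\Bigl(\frac{\partial}{\partial x_2}\Bigr)^{i}x_1^{-1}\delta\Bigl(\frac{x_2}{x_1}\Bigr),
\end{equation*}
to the three pairs $(u,v)\in\{(\omega,\omega),(\omega,I),(I,I)\}$. All the required products $u_i v$ can be evaluated inside $V_{\mathcal{L}}(\ell_{123},0)$ using the identifications $\omega_i=L_{i-1}$, $(I)_i=I_i$ together with the vacuum annihilations $L_n\mathbf{1}=0\ (n\ge -1)$ and $I_n\mathbf{1}=0\ (n\ge 0)$. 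A short calculation shows that the only nonzero contributions occur at $i=0,1,3$ for $(\omega,\omega)$ (with $\omega_3\omega=\tfrac{\ell_1}{2}\mathbf{1}$), at $i=0,1,2$ for $(\omega,I)$ (with $\omega_2 I=-2\ell_2\mathbf{1}$), and at $i=0,1$ for $(I,I)$ (with $I_1 I=\ell_3\mathbf{1}$). Substituting these into the commutator formula and using $Y_W(\mathbf{1},x)=\mathbf{1}_W$ reproduces exactly the generating-function identities (\ref{eq:2.7})--(\ref{eq:2.9}) with $c_i$ replaced by $\ell_i$; extracting coefficients then yields the brackets (\ref{eq:2.4})--(\ref{eq:2.6}), so $W$ is an $\mathcal{L}$-module of level $\ell_{123}$.

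The only place requiring care is the index shift between $\omega_n$ and $L_n$ (so that $[L_m,L_n]=[\omega_{m+1},\omega_{n+1}]$, and similarly for the mixed and Heisenberg brackets); once this convention is fixed, no substantial obstacle remains. The identity $Y_W(\mathbf{1},x)=\mathbf{1}_W$ is precisely what guarantees that the $\mathbf{1}$-contributions coming from $\omega_3\omega$, $\omega_2 I$, and $I_1 I$ act on $W$ as the scalars $\tfrac{\ell_1}{2}$, $-2\ell_2$, and $\ell_3$, so the central elements $c_1,c_2,c_3$ of $\mathcal{L}$ indeed act as $\ell_1,\ell_2,\ell_3$ on $W$.
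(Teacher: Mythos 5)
Your proposal is correct and follows essentially the same route as the paper's proof: both reduce the claim to the module commutator formula (Proposition 5.6.7 of \cite{LL}) applied to the pairs $(\omega,\omega)$, $(\omega,I)$, $(I,I)$, compute the same products (your values $\omega_3\omega=\tfrac{\ell_1}{2}\mathbf{1}$, $\omega_2 I=-2\ell_2\mathbf{1}$, $I_1 I=\ell_3\mathbf{1}$ agree with (\ref{eq:2.19})--(\ref{eq:2.21})), and obtain the restricted condition directly from $Y_W(v,x)\in\mathrm{Hom}(W,W((x)))$. Two cosmetic points: $I_0 I=[I_0,I_{-1}]\mathbf{1}=0$, so the $(I,I)$ bracket in fact has no $i=0$ contribution (harmless, since the term you would insert vanishes); and matching the $i=0$ terms $Y_W(L_{-3}\mathbf{1},x_2)$, $Y_W(I_{-2}\mathbf{1},x_2)$ with $L'(x_2)$, $I'(x_2)$ uses the derivative property $Y_W(d(v),x)=\tfrac{d}{dx}Y_W(v,x)$, which the paper invokes explicitly and you leave implicit.
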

\begin{proof}
We have
$(L_{-2}{\bf 1})_{i}=\omega_{i}=L_{i-1}$, $(I_{-1}{\bf 1})_{i}=I_{i}$ for $i\in\Z.$
Then for $i\geq 0$,
\begin{eqnarray}
&&{} (L_{-2}{\bf 1})_{i}L_{-2}{\bf 1}=L_{i-1}L_{-2}{\bf 1}=[L_{i-1},L_{-2}]{\bf 1}  \nonumber\\
&&{} = (i+1)L_{i-3}
{\bf 1}+\delta_{i-3,0}\frac{(i-1)^{3}-(i-1)}{12}c_{1}{\bf 1},        \label{eq:2.19}
\end{eqnarray}
\begin{eqnarray}
&&{} (L_{-2}{\bf 1})_{i}I_{-1}{\bf 1}=L_{i-1}I_{-1}{\bf 1}=[L_{i-1},I_{-1}]{\bf 1}\nonumber\\
&&{}= I_{i-2}
{\bf 1}-\delta_{i-2,0}((i-1)^{2}+(i-1))c_{2}{\bf 1},        \label{eq:2.20}
\end{eqnarray}
\begin{eqnarray}
(I_{-1}{\bf 1})_{i}I_{-1}{\bf 1}=I_{i}I_{-1}{\bf 1}=[I_{i},I_{-1}]{\bf 1}= i\delta_{i-1,0}c_{3}{\bf 1}.        \label{eq:2.21}
\end{eqnarray}
By Proposition 5.6.7 of \cite{LL}, we get
\begin{eqnarray}
&&{} [Y_{W}(L_{-2}{\bf 1}, x_{1}), Y_{W}(L_{-2}{\bf 1}, x_{2})]  \nonumber\\
&&{} =\sum\limits_{i\geq 0}\frac{(-1)^{i}}{i!}Y_{W}((L_{-2}{\bf 1})_{i}L_{-2}{\bf 1}, x_{2})
\left(\frac{\partial}{\partial x_{1}}\right)^{i}x_{2}^{-1}\delta\left(\frac{x_{1}}{x_{2}}\right)  \nonumber\\
&&{} = Y_{W}(L_{-3}{\bf 1}, x_{2})x_{2}^{-1}\delta\left(\frac{x_{1}}{x_{2}}\right)  \nonumber\\
     &&{}\;\quad -2Y_{W}(L_{-2}{\bf 1}, x_{2})\left(\frac{\partial}{\partial x_{1}}\right)x_{2}^{-1}\delta\left(\frac{x_{1}}{x_{2}}\right)  \nonumber\\
     &&{}\;\quad -\frac{1}{12}\left(\frac{\partial}{\partial x_{1}}\right)^{3}x_{2}^{-1}\delta\left(\frac{x_{1}}{x_{2}}\right)c_{1}{\bf 1},             \label{eq:2.22}
\end{eqnarray}
\begin{eqnarray}
&&{} [Y_{W}(L_{-2}{\bf 1}, x_{1}), Y_{W}(I_{-1}{\bf 1}, x_{2})]  \nonumber\\
&&{} =\sum\limits_{i\geq 0}\frac{(-1)^{i}}{i!}Y_{W}((L_{-2}{\bf 1})_{i}I_{-1}{\bf 1}, x_{2})
\left(\frac{\partial}{\partial x_{1}}\right)^{i}x_{2}^{-1}\delta\left(\frac{x_{1}}{x_{2}}\right)  \nonumber\\
&&{} = Y_{W}(I_{-2}{\bf 1}, x_{2})x_{2}^{-1}\delta\left(\frac{x_{1}}{x_{2}}\right)  \nonumber\\
     &&{}\;\quad -Y_{W}(I_{-1}{\bf 1}, x_{2})\left(\frac{\partial}{\partial x_{1}}\right)x_{2}^{-1}\delta\left(\frac{x_{1}}{x_{2}}\right)  \nonumber\\
     &&{}\;\quad -\left(\frac{\partial}{\partial x_{1}}\right)^{2}x_{2}^{-1}\delta\left(\frac{x_{1}}{x_{2}}\right)c_{2}{\bf 1},             \label{eq:2.23}
\end{eqnarray}
\begin{eqnarray}
&&{} [Y_{W}(I_{-1}{\bf 1}, x_{1}), Y_{W}(I_{-1}{\bf 1}, x_{2})]  \nonumber\\
&&{} =\sum\limits_{i\geq 0}\frac{(-1)^{i}}{i!}Y_{W}((I_{-1}{\bf 1})_{i}I_{-1}{\bf 1}, x_{2})
\left(\frac{\partial}{\partial x_{1}}\right)^{i}x_{2}^{-1}\delta\left(\frac{x_{1}}{x_{2}}\right)  \nonumber\\
&&{} = -\left(\frac{\partial}{\partial x_{1}}\right)x_{2}^{-1}\delta\left(\frac{x_{1}}{x_{2}}\right)c_{3}{\bf 1},             \label{eq:2.24}
\end{eqnarray}

Note that
$$Y_{W}(L_{-3}{\bf 1},x)=Y_{W}(d(L_{-2}{\bf 1}),x)=\frac{d }{d x}Y_{W}(L_{-2}{\bf 1},x)$$
and
$$Y_{W}(I_{-2}{\bf 1},x)=Y_{W}(d(I_{-1}{\bf 1}),x)=\frac{d }{d x}Y_{W}(I_{-1}{\bf 1},x).$$
With Proposition 2.3.6 of \cite{LL} and the fact that
\begin{eqnarray*}
x_{2}^{-1}\delta\left(\frac{x_{1}}{x_{2}}\right)=x_{1}^{-1}\delta\left(\frac{x_{1}}{x_{2}}\right)=x_{1}^{-1}\delta\left(\frac{x_{2}}{x_{1}}\right),
\end{eqnarray*}
we see that $W$ is a $\mathcal{L}$-module of {\em level} $\ell_{123}$ with
$L(x)=Y_{W}(L_{-2}{\bf 1},x)$,
 and $I(x)=Y_{W}(I_{-1}{\bf 1},x)$ for $L_{-2},I_{-1}\in\mathcal{L}$.
Since $W$ is a $V_{\mathcal{L}}(\ell_{123},0)$-module,
by definition $Y_{W}(L_{-2}{\bf 1},x)$, $Y_{W}(I_{-1}{\bf 1},x)\in\mathcal{E}(W)$.
Therefore, $W$ is a restricted $\mathcal{L}$-module of level $\ell_{123}$.
\end{proof}

\subsection{$\phi$-coordinated modules}

We now show the results with respect to $\phi$-coordinated modules.
Modifying the generating functions of $\mathcal{L}$ by a shift as follows
$$\widetilde{L}(x)=\sum\limits_{n\in\Z}L_{n}x^{-n},\;\; \widetilde{I}(x)=\sum\limits_{n\in\Z}I_{n}x^{-n},$$
then the defining relations of $\mathcal{L}$ become to be
\begin{eqnarray}
&&{}[\widetilde{L}(x_{1}), \widetilde{L}(x_{2})]  \nonumber \\
&&{}=\sum\limits_{m,n\in\Z}(m-n)L_{m+n}x_{1}^{-m}x_{2}^{-n}+\sum\limits_{m\in\Z}\frac{m^{3}-m}{12}c_{1}x_{1}^{-m}x_{2}^{m}  \nonumber \\
 &&{}=\left(x_{2}\frac{\partial}{\partial x_{2}}\widetilde{L}(x_{2})\right)\delta\left(\frac{x_{2}}{x_{1}}\right)
          + 2\widetilde{L}(x_{2})\left(x_{2}\frac{\partial}{\partial x_{2}}\right)\delta\left(\frac{x_{2}}{x_{1}}\right)
                                 \nonumber\\
 &&{} \;\quad +\frac{c_{1}}{12}\left(x_{2}\frac{\partial}{\partial x_{2}}\right)^{3}\delta\left(\frac{x_{2}}{x_{1}}\right)
          -\frac{c_{1}}{12}\left(x_{2}\frac{\partial}{\partial x_{2}}\right)\delta\left(\frac{x_{2}}{x_{1}}\right),    \label{eq:3.2}
 \end{eqnarray}

\begin{eqnarray}
&&{}[\widetilde{L}(x_{1}), \widetilde{I}(x_{2})] \nonumber \\
&&{}=-\sum\limits_{m,n\in\Z}n I_{m+n}x_{1}^{-m}x_{2}^{-n}-\sum\limits_{m\in\Z}(m^{2}+m)c_{2}x_{1}^{-m}x_{2}^{m}   \nonumber \\
 &&{}=\left(x_{2}\frac{\partial}{\partial x_{2}}\widetilde{I}(x_{2})\right)\delta\left(\frac{x_{2}}{x_{1}}\right)
          + \widetilde{I}(x_{2})\left(x_{2}\frac{\partial}{\partial x_{2}}\right)\delta\left(\frac{x_{2}}{x_{1}}\right)
                                 \nonumber\\
  &&{}\;\quad -\left(x_{2}\frac{\partial}{\partial x_{2}}\right)^{2}\delta\left(\frac{x_{2}}{x_{1}}\right)c_{2}
          -\left(x_{2}\frac{\partial}{\partial x_{2}}\right)\delta\left(\frac{x_{2}}{x_{1}}\right)c_{2},    \label{eq:3.3}
 \end{eqnarray}
 \begin{eqnarray}
[\widetilde{I}(x_{1}), \widetilde{I}(x_{2})]
=\sum\limits_{m\in\Z}m c_{3}x_{1}^{-m}x_{2}^{m}
=\left(x_{2}\frac{\partial}{\partial x_{2}}\right)\delta\left(\frac{x_{2}}{x_{1}}\right)c_{3}.     \label{eq:3.4}
 \end{eqnarray}
We further set
$$\widehat{L}(x)=\widetilde{L}(x)-\frac{1}{24}c_{1},\;\; \widehat{I}(x)=\widetilde{I}(x)-c_{2},$$
then we have
\begin{eqnarray}
&&{}[\widehat{L}(x_{1}), \widehat{L}(x_{2})]  \nonumber \\
 &&{}=\left(x_{2}\frac{\partial}{\partial x_{2}}\widehat{L}(x_{2})\right)\delta\left(\frac{x_{2}}{x_{1}}\right)
          + 2\widehat{L}(x_{2})\left(x_{2}\frac{\partial}{\partial x_{2}}\right)\delta\left(\frac{x_{2}}{x_{1}}\right)
                                 \nonumber\\
 &&{} \;\quad +\frac{c_{1}}{12}\left(x_{2}\frac{\partial}{\partial x_{2}}\right)^{3}\delta\left(\frac{x_{2}}{x_{1}}\right),    \label{eq:3.5}
 \end{eqnarray}

\begin{eqnarray}
&&{}[\widehat{L}(x_{1}), \widehat{I}(x_{2})]  \nonumber \\
 &&{}=\left(x_{2}\frac{\partial}{\partial x_{2}}\widehat{I}(x_{2})\right)\delta\left(\frac{x_{2}}{x_{1}}\right)
          + \widehat{I}(x_{2})\left(x_{2}\frac{\partial}{\partial x_{2}}\right)\delta\left(\frac{x_{2}}{x_{1}}\right)
                                 \nonumber\\
  &&{}\;\quad -\left(x_{2}\frac{\partial}{\partial x_{2}}\right)^{2}\delta\left(\frac{x_{2}}{x_{1}}\right)c_{2},    \label{eq:3.6}
 \end{eqnarray}
 \begin{eqnarray}
[\widehat{I}(x_{1}), \widehat{I}(x_{2})]
=\left(x_{2}\frac{\partial}{\partial x_{2}}\right)\delta\left(\frac{x_{2}}{x_{1}}\right)c_{3}.     \label{eq:3.7}
 \end{eqnarray}

Now we need a new Lie algebra to establish the connection of $\mathcal{L}$ and vertex algebra by $\phi$-coordinated modules.

\begin{definition}\label{New Lie algebra 1}
{\em Let $\mathfrak{L}$ be a vector space spanned by the elements
$\overline{L}_{n}, \overline{I}_{n}, c_{i}$, $n\in\Z,i=1,2,3,$ we define the brackets of $\mathfrak{L}$ as follows:
\begin{eqnarray}
[\overline{L}_{m}, \overline{L}_{n}]=(m-n)\overline{L}_{m+n-1}
                                       +\frac{m(m-1)(m-2)}{12}\delta_{m+n-2,0}c_{1},  \label{eq:3.8}
\end{eqnarray}
\begin{eqnarray}
[\overline{L}_{m}, \overline{I}_{n}]=-n \overline{I}_{m+n-1}-(m^{2}-m)\delta_{m+n-1,0}c_{2},   \label{eq:3.9}
\end{eqnarray}
\begin{eqnarray}
[\overline{I}_{m}, \overline{I}_{n}]=m\delta_{m+n,0}c_{3},\;\;\; [\mathfrak{L}, c_{i}]=0, \;\; \mbox{for} \;\; i=1,2,3.   \label{eq:3.10}
\end{eqnarray}
}
\end{definition}

It is straightforward to see that $\mathfrak{L}$ is a Lie algebra, and it is isomorphic
to the rank one twisted Heisenberg-Virasoro algebra $\mathcal{L}$ via
$$\overline{L}_{m}\mapsto L_{m-1},\;\; \overline{I}_{m}\mapsto I_{m},\;\; c_i\mapsto c_i \;\; \mbox{for} \;\;i=1,2,3.$$

We set
$$\overline{L}(x)=\sum\limits_{n\in\Z}\overline{L}_{n}x^{-n-1},\;\;\quad \overline{I}(x)=\sum\limits_{n\in\Z}\overline{I}_{n}x^{-n-1},$$
then the definition relations of $\mathfrak{L}$ amount to
\begin{eqnarray}
&&{}[\overline{L}(x_{1}), \overline{L}(x_{2})]  \nonumber \\
&&{}=\sum\limits_{m,n\in\Z}(m-n)\overline{L}_{m+n-1}x_{1}^{-m-1}x_{2}^{-n-1}
         +\sum\limits_{m\in\Z}\frac{c_{1}}{12}m(m-1)(m-2)x_{1}^{-m-1}x_{2}^{m-3}    \nonumber \\
&&{}=\left(\frac{\partial}{\partial x_{2}}\overline{L}(x_{2})\right)x_{1}^{-1}\delta\left(\frac{x_{2}}{x_{1}}\right)
      +2\overline{L}(x_{2})\frac{\partial}{\partial x_{2}}x_{1}^{-1}\delta\left(\frac{x_{2}}{x_{1}}\right)
                             \nonumber \\
&&{} \;\quad +\frac{c_{1}}{12}\left(\frac{\partial}{\partial x_{2}}\right)^{3}x_{1}^{-1}\delta\left(\frac{x_{2}}{x_{1}}\right),   \label{eq:3.11}
\end{eqnarray}
\begin{eqnarray}
&&{}[\overline{L}(x_{1}), \overline{I}(x_{2})]   \nonumber\\
&&{}=-\sum\limits_{m,n\in\Z}n \overline{I}_{m+n-1}x_{1}^{-m-1}x_{2}^{-n-1}-\sum\limits_{m\in\Z}(m^{2}-m)c_{2}x_{1}^{-m-1}x_{2}^{m-2} \nonumber\\
&&{}=\left(\frac{\partial}{\partial x_{2}}\overline{I}(x_{2})\right)x_{1}^{-1}\delta\left(\frac{x_{2}}{x_{1}}\right)
           +\overline{I}(x_{2})\frac{\partial}{\partial x_{2}}x_{1}^{-1}\delta\left(\frac{x_{2}}{x_{1}}\right)
                             \nonumber \\
&&{}\;\quad-\left(\frac{\partial}{\partial x_{2}}\right)^{2}x_{1}^{-1}\delta\left(\frac{x_{2}}{x_{1}}\right)c_{2},  \label{eq:3.12}
\end{eqnarray}
\begin{eqnarray}
[\overline{I}(x_{1}), \overline{I}(x_{2})]
=\sum\limits_{m\in\Z}m c_{3}x_{1}^{-m-1}x_{2}^{m-1}
=\frac{\partial}{\partial x_{2}}x_{1}^{-1}\delta\left(\frac{x_{2}}{x_{1}}\right)c_{3}.     \label{eq:3.13}
\end{eqnarray}

Set
$$ \mathfrak{L}_{\geq 0}= \coprod_{n\geq 0}\overline{L}_{n}\oplus\coprod_{n\geq 0}\overline{I}_{n}
                     \oplus\sum\limits_{i=1}^{3}\C c_{i},\;\;\quad
    \mathfrak{L}_{< 0}= \coprod_{n< 0}\overline{L}_{n}\oplus\coprod_{n< 0}\overline{I}_{n}.$$
We see that $\mathfrak{L}_{\geq 0}$ and $\mathfrak{L}_{< 0}$ are Lie subalgebras
and $\mathfrak{L}=\mathfrak{L}_{\geq 0}\oplus\mathfrak{L}_{< 0}$ as a
vector space.
Let $\ell_{i}\in\C, i=1,2,3,$ and we denote by $\C_{\ell_{123}}=\C$ the one-dimensional
$\mathfrak{L}_{\geq 0}$-module with
$\coprod_{n\geq 0}\overline{L}_{n}\oplus\coprod_{n\geq 0}\overline{I}_{n}$ acting trivially and
$c_{i}$ acting as $\ell_{i}$ for $i=1,2,3.$
Form the induced module
$$V_{\mathfrak{L}}(\ell_{123},0)
= U(\mathfrak{L})\otimes_{U(\mathfrak{L}_{\geq 0})}\C_{\ell_{123}}. $$
Set ${\bf 1} =1\otimes 1\in V_{\mathfrak{L}}(\ell_{123},0)$.
Similarly, one can show that there exists a natural vertex algebra structure on $V_{\mathfrak{L}}(\ell_{123},0)$
with a linear operator $\overline{d}$ on $\mathfrak{L}$ defined
by
$$\overline{d}(c_{i})=0, \;\;\mbox{for}\;\; i=1,2,3,$$
$$\overline{d}(\overline{L}_{n})=-n\overline{L}_{n-1},\;\;\mbox{and}\;\; \overline{d}(\overline{I}_{n})=-n\overline{I}_{n-1},$$
and it is uniquely determined by the condition that ${\bf 1}$ is the vacuum vector and
\begin{eqnarray}
Y(\overline{L}_{-1}{\bf 1},x)=\overline{L}(x)\left(=\sum\limits_{n\in\Z}\overline{L}_{n}x^{-n-1}\right) ,  \label{eq:3.14}
\end{eqnarray}
\begin{eqnarray}
Y(\overline{I}_{-1}{\bf 1},x)=\overline{I}(x)\left(=\sum\limits_{n\in\Z}\overline{I}_{n}x^{-n-1}\right).    \label{eq:3.15}
\end{eqnarray}
The vertex operator map $Y$ for this vertex algebra structure is given by
$$Y(\overline{I}_{m_{1}}\cdots \overline{I}_{m_{s}}\overline{L}_{n_{1}}\cdots \overline{L}_{n_{r}}{\bf 1},x)
=\overline{I}(x)_{m_{1}}\cdots \overline{I}(x)_{m_{s}}\overline{L}(x)_{n_{1}}\cdots \overline{L}(x)_{n_{r}}{\bf 1}$$
for $r\geq 0,s\geq 0$ and $n_{1},\cdots,n_{r},m_{1},\cdots,m_{s}\in\Z.$
Furthermore, $T=\{\overline{L}_{-1}{\bf 1},\overline{I}_{-1}{\bf 1}\}$ is a generating subset of $V_{\mathfrak{L}}(\ell_{123},0)$.

\begin{rmk}\label{universal2}
{\em
As a module for the new Lie algebra $\mathfrak{L}$, $V_{\mathfrak{L}}(\ell_{123},0)$ is generated by ${\bf 1}$ with the
relations $c_{i}=\ell_{i}$ and $\overline{L}_{n}{\bf 1}=\overline{I}_{n}{\bf 1}=0$ for $n\geq 0,$ $i=1,2,3,$
and in fact $V_{\mathfrak{L}}(\ell_{123},0)$ is {\em universal} in the sense that for any module $W$ of $\mathfrak{L}$
of level $\ell_{123}$ equipped with a vector $e\in W$ such that $\overline{L}_{n}e=\overline{I}_{n}e=0$ for $n\geq 0,$
there exists a unique $\mathfrak{L}$-module homomorphism from $V_{\mathfrak{L}}(\ell_{123},0)$
to $W$ sending ${\bf 1}$ to $e$.
}
\end{rmk}

And we have:

\begin{thm} \label{coordinated mod1}
Let $W$ be a restricted $\mathcal{L}$-module of level $\ell_{123}$.
                  Then there exists a
                  $\phi$-coordinated $V_{\mathfrak{L}}(\ell_{123},0)$-module
                 structure $Y_{W}(\cdot,x)$ on $W$, which is uniquely
                 determined by
                 $$Y_{W}(\overline{L}_{-1}{\bf 1},x) = \widehat{L}(x)\;\;\mbox{and}
                            \;\; Y_{W}(\overline{I}_{-1}{\bf 1},x) = \widehat{I}(x)
                   \  \  \  \mbox{ for }\overline{L}_{-1},\overline{I}_{-1}\in\mathfrak{L}.$$
   The vertex operator map $Y_{W}$ for this module structure is given by
                   \begin{eqnarray*}
Y_{W}(\overline{I}_{m_{1}}\cdots \overline{I}_{m_{s}}\overline{L}_{n_{1}}\cdots \overline{L}_{n_{r}}{\bf 1},x)
=\widehat{I}(x)_{m_{1}}^{e}\cdots \widehat{I}(x)_{m_{s}}^{e}\widehat{L}(x)_{n_{1}}^{e}\cdots \widehat{L}(x)_{n_{r}}^{e}{\bf 1}_{W}
\end{eqnarray*}
for $r\geq 0, s\geq 0$ and $n_{1},\cdots,n_{r},m_{1},\cdots,m_{s}\in\Z.$
\end{thm}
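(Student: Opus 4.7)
The plan is to adapt the strategy of Theorem \ref{Mod1} to the $\phi$-coordinated framework of \cite{Li4}. I would first form the subset $U_{W}=\{\widehat{L}(x),\widehat{I}(x),\mathbf{1}_{W}\}\subset\mathcal{E}(W)$ and verify that it is local. Using $(x_{1}-x_{2})\delta(x_{2}/x_{1})=0$ and induction on $k$, one checks that $(x_{1}-x_{2})^{k+1}(x_{2}\partial/\partial x_{2})^{k}\delta(x_{2}/x_{1})=0$, so the commutator formulas (\ref{eq:3.5})--(\ref{eq:3.7}) are annihilated respectively by $(x_{1}-x_{2})^{4}$, $(x_{1}-x_{2})^{3}$, and $(x_{1}-x_{2})^{2}$. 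Theorem \ref{coordinated} then produces a vertex algebra $(\langle U_{W}\rangle_{e},Y_{\mathcal{E}}^{e},\mathbf{1}_{W})$ containing $\widehat{L}(x)$ and $\widehat{I}(x)$, and $W$ becomes a $\phi$-coordinated $\langle U_{W}\rangle_{e}$-module via $Y_{W}(a(x),z)=a(z)$.

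Second, I would compute the brackets of the modes $\widehat{L}(x)_{m}^{e}$ and $\widehat{I}(x)_{m}^{e}$ acting on $\langle U_{W}\rangle_{e}$ by inserting the definition of $Y_{\mathcal{E}}^{e}$ into (\ref{eq:3.5})--(\ref{eq:3.7}) and applying the substitution $x_{1}=x_{2}e^{z}$. The scalar shifts built into $\widehat{L}(x)=\widetilde{L}(x)-c_{1}/24$ and $\widehat{I}(x)=\widetilde{I}(x)-c_{2}$ are arranged precisely to absorb the extra $(x_{2}\partial/\partial x_{2})\delta$ summands visible in (\ref{eq:3.2})--(\ref{eq:3.4}), so that after the $\phi$-substitution the exponentials unpack into the combinatorial coefficients $(m-n)$, $m(m-1)(m-2)/12$, $(m^{2}-m)$, and $m\delta_{m+n,0}$, reproducing exactly the Lie brackets (\ref{eq:3.8})--(\ref{eq:3.10}) of $\mathfrak{L}$. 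Combined with the vacuum annihilation property $a(x)_{n}^{e}\mathbf{1}_{W}=0$ for $a(x)\in\langle U_{W}\rangle_{e}$ and $n\geq 0$, this exhibits $\langle U_{W}\rangle_{e}$ as an $\mathfrak{L}$-module of level $\ell_{123}$ with $\overline{L}_{m}$ and $\overline{I}_{m}$ acting as $\widehat{L}(x)_{m}^{e}$ and $\widehat{I}(x)_{m}^{e}$, and with $\mathbf{1}_{W}$ a vacuum-like vector.

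Third, by Remark \ref{universal2} the universal property produces a unique $\mathfrak{L}$-module homomorphism $\psi:V_{\mathfrak{L}}(\ell_{123},0)\to\langle U_{W}\rangle_{e}$ with $\psi(\mathbf{1})=\mathbf{1}_{W}$, hence sending $\overline{L}_{-1}\mathbf{1}$ and $\overline{I}_{-1}\mathbf{1}$ to $\widehat{L}(x)$ and $\widehat{I}(x)$, and satisfying
\[
\psi(\overline{L}_{n}v)=\widehat{L}(x)_{n}^{e}\psi(v),\qquad \psi(\overline{I}_{n}v)=\widehat{I}(x)_{n}^{e}\psi(v)
\]
for $n\in\mathbb{Z}$, $v\in V_{\mathfrak{L}}(\ell_{123},0)$. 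Applying the $\phi$-coordinated analog of Theorem 5.7.6 of \cite{LL} to the generating set $T=\{\overline{L}_{-1}\mathbf{1},\overline{I}_{-1}\mathbf{1}\}$ then upgrades $\psi$ to a vertex algebra homomorphism. Pulling the $\phi$-coordinated $\langle U_{W}\rangle_{e}$-module structure on $W$ back along $\psi$ yields the desired $\phi$-coordinated $V_{\mathfrak{L}}(\ell_{123},0)$-module structure together with the stated vertex operator formula, and uniqueness is forced because $T$ generates $V_{\mathfrak{L}}(\ell_{123},0)$. The main obstacle is the second step: although (\ref{eq:3.5})--(\ref{eq:3.7}) look parallel to ordinary Virasoro/Heisenberg OPEs, the $\phi$-substitution mixes $x_{2}\partial/\partial x_{2}$ with $e^{z}$ nontrivially, and one must unwind this mixing carefully to see the brackets of $\mathfrak{L}$ emerge rather than those of $\mathcal{L}$.
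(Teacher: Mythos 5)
Your proposal is correct and follows essentially the same route as the paper: form the local subset $\{\widehat{L}(x),\widehat{I}(x),\mathbf{1}_W\}$, invoke Theorem \ref{coordinated} to get $\langle U_W\rangle_e$ with $W$ a $\phi$-coordinated module, verify via the mode computations that $\langle U_W\rangle_e$ is an $\mathfrak{L}$-module of level $\ell_{123}$, and transport the structure through the homomorphism $\psi$ furnished by Remark \ref{universal2}. The only cosmetic difference is that in the final step the paper applies Proposition 5.7.9 of \cite{LL} directly (since $\psi$ is a map between two ordinary vertex algebras, no $\phi$-coordinated analog is needed there), and it packages your ``unwinding of the $\phi$-substitution'' as explicit computations of $\widehat{L}(x)_i^e\widehat{L}(x)$, etc., via Lemma 4.13/Proposition 4.14 of \cite{Li5} followed by Borcherds' commutator formula.
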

\begin{proof}Since $T=\{\overline{L}_{-1}{\bf 1},\overline{I}_{-1}{\bf 1}\}$ generates $V_{\mathfrak{L}}(\ell_{123},0)$
as a vertex algebra, the uniqueness is clear. We now prove the existence.
Set $U_{W}=\{ {\bf 1}_{W}\}\cup\{\widehat{L}(x),\widehat{I}(x)\}\subset\mathcal{E}(W)$.
From (\ref{eq:3.5}) to (\ref{eq:3.7}), by using Lemma 2.1 of \cite{Li5}, we see that
\begin{eqnarray*}
(x_{1}-x_{2})^{4} [\widehat{L}(x_{1}),\widehat{L}(x_{2})]=0,\;\;
(x_{1}-x_{2})^{3}[\widehat{L}(x_{1}), \widehat{I}(x_{2})]=0,\;\;
(x_{1}-x_{2})^{2}[\widehat{I}(x_{1}), \widehat{I}(x_{2})]=0.
\end{eqnarray*}
Then as $U_{W}$ is a local subset of $\mathcal{E}(W)$, Theorem \ref{coordinated} tells us
 $U_{W}$ generates a vertex algebra $\langle U_{W}\rangle_{e}$
 under the vertex operator operation $Y_{\mathcal{E}}^{e}$ with $W$ a $\phi$-coordinated module,
 and $Y_{W}(a(x),z) = a(z)$ for $a(x)\in \langle U_{W}\rangle_{e}.$
 Using Lemma 4.13 or Proposition 4.14 of \cite{Li5} and (\ref{eq:3.5}), (\ref{eq:3.6}) and (\ref{eq:3.7}), we have
 $$\widehat{L}(x)_{i}^{e}\widehat{L}(x) = 0   \;\;\mbox{for}\; i= 2\;\mbox{and}\; i\geq 4,$$
 $$\widehat{L}(x)_{3}^{e}\widehat{L}(x) =\frac{\ell_{1}}{2}{\bf 1}_{W},$$
  $$\widehat{L}(x)_{1}^{e}\widehat{L}(x) =2\widehat{L}(x),$$
 $$\widehat{L}(x)_{0}^{e}\widehat{L}(x) =x\frac{\partial}{\partial x}\widehat{L}(x),$$
and
 $$\widehat{L}(x)_{i}^{e}\widehat{I}(x) = 0   \;\;\mbox{for}\; i\geq 3,$$
 $$\widehat{L}(x)_{2}^{e}\widehat{I}(x) =-2\ell_{2}{\bf 1}_{W},$$
  $$\widehat{L}(x)_{1}^{e}\widehat{I}(x) =\widehat{I}(x),$$
 $$\widehat{L}(x)_{0}^{e}\widehat{I}(x) =x\frac{\partial}{\partial x}\widehat{I}(x),$$
and
$$\widehat{I}(x)_{i}^{e}\widehat{I}(x) = 0   \;\;\mbox{for}\; i= 0\;\mbox{and}\; i\geq 2,$$
 $$\widehat{I}(x)_{1}^{e}\widehat{I}(x) =\ell_{3}{\bf 1}_{W}.$$
Then by Borcherds' commutator formula we have
\begin{eqnarray*}
&&[Y_{\mathcal{E}}^{e}(\widehat{L}(x),x_{1}),Y_{\mathcal{E}}^{e}(\widehat{L}(x),x_{2})] \nonumber\\
&=&\sum_{i\geq 0}Y_{\mathcal{E}}^{e}(\widehat{L}(x)_{i}^{e}\widehat{L}(x),x_{2})
   \frac{1}{i!}\left(\frac{\partial}{\partial x_{2}}\right)^{i}x_{1}^{-1}\delta\left(\frac{x_{2}}{x_{1}}\right)  \nonumber\\
&=&Y_{\mathcal{E}}^{e}(x\frac{\partial}{\partial x}\widehat{L}(x),x_{2})x_{1}^{-1}\delta\left(\frac{x_{2}}{x_{1}}\right)
    +2Y_{\mathcal{E}}^{e}(\widehat{L}(x),x_{2}) \frac{\partial}{\partial x_{2}} x_{1}^{-1}\delta\left(\frac{x_{2}}{x_{1}}\right)
                                  \nonumber\\
 &&{} + \frac{\ell_{1}}{12}{\bf 1}_{W}\left(\frac{\partial}{\partial x_{2}}\right)^{3} x_{1}^{-1}\delta\left(\frac{x_{2}}{x_{1}}\right) ,
 \end{eqnarray*}
 and
 \begin{eqnarray*}
&&[Y_{\mathcal{E}}^{e}(\widehat{L}(x),x_{1}),Y_{\mathcal{E}}^{e}(\widehat{I}(x),x_{2})] \nonumber\\
&=&\sum_{i\geq 0}Y_{\mathcal{E}}^{e}(\widehat{L}(x)_{i}^{e}\widehat{I}(x),x_{2})
   \frac{1}{i!}\left(\frac{\partial}{\partial x_{2}}\right)^{i}x_{1}^{-1}\delta\left(\frac{x_{2}}{x_{1}}\right)  \nonumber\\
&=&Y_{\mathcal{E}}^{e}(x\frac{\partial}{\partial x}\widehat{I}(x),x_{2})x_{1}^{-1}\delta\left(\frac{x_{2}}{x_{1}}\right)
    +Y_{\mathcal{E}}^{e}(\widehat{I}(x),x_{2}) \frac{\partial}{\partial x_{2}} x_{1}^{-1}\delta\left(\frac{x_{2}}{x_{1}}\right)
                                  \nonumber\\
 &&{}- \ell_{2}{\bf 1}_{W}\left(\frac{\partial}{\partial x_{2}}\right)^{2} x_{1}^{-1}\delta\left(\frac{x_{2}}{x_{1}}\right) ,
 \end{eqnarray*}
 and
 \begin{eqnarray*}
&&[Y_{\mathcal{E}}^{e}(\widehat{I}(x),x_{1}),Y_{\mathcal{E}}^{e}(\widehat{I}(x),x_{2})] \nonumber\\
&=&\sum_{i\geq 0}Y_{\mathcal{E}}^{e}(\widehat{I}(x)_{i}^{e}\widehat{I}(x),x_{2})
   \frac{1}{i!}\left(\frac{\partial}{\partial x_{2}}\right)^{i}x_{1}^{-1}\delta\left(\frac{x_{2}}{x_{1}}\right)  \nonumber\\
&=&\ell_{3}{\bf 1}_{W} \frac{\partial}{\partial x_{2}} x_{1}^{-1}\delta\left(\frac{x_{2}}{x_{1}}\right)  .
 \end{eqnarray*}

Comparing these with (\ref{eq:3.11}) to (\ref{eq:3.13}), we see that $\langle U_{W}\rangle_{e}$ is a $\mathfrak{L}$-module
of {\em level} $\ell_{123}$ with $\overline{L}(z),\overline{I}(z)$ acting as
$Y_{\mathcal{E}}^{e}(\widehat{L}(x),z),Y_{\mathcal{E}}^{e}(\widehat{I}(x),z)$ respectively, and
$\frac{\partial}{\partial z}\overline{L}(z),\frac{\partial}{\partial z}\overline{I}(z)$ acting as
$Y_{\mathcal{E}}^{e}(x\frac{\partial}{\partial x}\widehat{L}(x),z),$
$Y_{\mathcal{E}}^{e}(x\frac{\partial}{\partial x}\widehat{I}(x),z)$ respectively.
Since $\widehat{L}(x)_{n}^{e}{\bf 1}_{W}=\widehat{I}(x)_{n}^{e}{\bf 1}_{W} = 0$ for $n\neq-1,$
and $\widehat{L}(x)_{-1}^{e}{\bf 1}_{W}=\widehat{L}(x)$, $\widehat{I}(x)_{-1}^{e}{\bf 1}_{W}=\widehat{I}(x)$,
we have $\overline{L}_{n}{\bf 1}_{W}=\overline{I}_{n}{\bf 1}_{W}=0$ for $n\geq 0$.
Then it follows from Remark \ref{universal2} that there exists a unique
$\mathfrak{L}$-module homomorphism $\psi$ from $V_{\mathfrak{L}}(\ell_{123},0)$
to $\langle U_{W}\rangle_{e}$ with $\psi({\bf 1})={\bf 1}_{W}.$
So
$$\psi(\overline{L}_{-1}{\bf 1})=\widehat{L}(x)_{-1}^{e}{\bf 1}_{W}=\widehat{L}(x)\in\langle U_{W}\rangle_{e}, $$
$$\psi(\overline{I}_{-1}{\bf 1})=\widehat{I}(x)_{-1}^{e}{\bf 1}_{W}=\widehat{I}(x)\in\langle U_{W}\rangle_{e}. $$
Now for $s\in\Z$, $v\in V_{\mathfrak{L}}(\ell_{123},0),$ with (\ref{eq:3.14}), (\ref{eq:3.15}) we have
$$\psi((\overline{L}_{-1}{\bf 1})_{s}v)=\psi(\overline{L}_{s}v)=\widehat{L}(x)_{s}^{e}\psi(v)=\psi(\overline{L}_{-1}{\bf 1})_{s}^{e}\psi(v),$$
$$\psi((\overline{I}_{-1}{\bf 1})_{s}v)=\psi(\overline{I}_{s}v)=\widehat{I}(x)_{s}^{e}\psi(v)=\psi(\overline{I}_{-1}{\bf 1})_{s}^{e}\psi(v),$$
Since $T=\{\overline{L}_{-1}{\bf 1},\overline{I}_{-1}{\bf 1}\}$ generates $V_{\mathfrak{L}}(\ell_{123},0)$ as a vertex algebra,
it follows from Proposition 5.7.9 of \cite{LL}
that $\psi$ is a homomorphism of vertex algebra, and then $W$ becomes a $\phi$-coordinated module
of $V_{\mathfrak{L}}(\ell_{123},0)$ with
  $$Y_{W}(\overline{L}_{-1}{\bf 1},x) = \widehat{L}(x),\;\;
                            \;\; Y_{W}(\overline{I}_{-1}{\bf 1},x) = \widehat{I}(x)
                   \  \  \  \mbox{ for }\overline{L}_{-1},\overline{I}_{-1}\in\mathfrak{L}$$
and
\begin{eqnarray*}
Y_{W}(\overline{I}_{m_{1}}\cdots \overline{I}_{m_{s}}\overline{L}_{n_{1}}\cdots \overline{L}_{n_{r}}{\bf 1},x)
=\widehat{I}(x)_{m_{1}}^{e}\cdots \widehat{I}(x)_{m_{s}}^{e}\widehat{L}(x)_{n_{1}}^{e}\cdots \widehat{L}(x)_{n_{r}}^{e}{\bf 1}_{W}
\end{eqnarray*}
for $r\geq 0, s\geq 0$ and $n_{1},\cdots,n_{r},m_{1},\cdots,m_{s}\in\Z.$
\end{proof}

On the other hand, we have:
\begin{thm}\label{coordinated mod2}
Let $W$ be a $\phi$-coordinated $V_{\mathfrak{L}}(\ell_{123},0)$-module.
Then $W$ is a restricted $\mathcal{L}$-module of level $\ell_{123}$ with
  $\widehat{L}(x)=Y_{W}(\overline{L}_{-1}{\bf 1},x)$,
 and $\widehat{I}(x)=Y_{W}(\overline{I}_{-1}{\bf 1},x)$ for $\overline{L}_{-1},\overline{I}_{-1}\in\mathfrak{L}.$
 \end{thm}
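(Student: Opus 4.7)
The plan is to verify directly that if we set $\widehat{L}(x) := Y_W(\overline{L}_{-1}{\bf 1}, x)$ and $\widehat{I}(x) := Y_W(\overline{I}_{-1}{\bf 1}, x)$, then these two elements of $\mathcal{E}(W)$ satisfy the bracket relations (\ref{eq:3.5})--(\ref{eq:3.7}). Once this is established, the coefficient extraction $\widetilde{L}(x)=\widehat{L}(x)+\frac{\ell_{1}}{24}$, $\widetilde{I}(x)=\widehat{I}(x)+\ell_{2}$ produces operators $L_{n}, I_{n}$ on $W$ obeying (\ref{eq:2.4})--(\ref{eq:2.6}) with $c_{i}$ acting as $\ell_{i}$, while the $\phi$-coordinated module requirement $Y_W(v,x)\in\mathrm{Hom}(W,W((x)))$ is precisely the restricted condition: for each $w\in W$, $L_{n}w=I_{n}w=0$ for $n$ sufficiently large.

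To produce the bracket relations, I will invoke the commutator formula for $\phi$-coordinated modules from \cite{Li4}, which expresses $[Y_W(u,x_{1}),Y_W(v,x_{2})]$ as a finite sum of terms of the form $Y_W(u_{i}v,x_{2})$ multiplied by appropriate differentials of the delta function in the $\phi$-coordinate (where the relevant derivation is $x_{2}\frac{\partial}{\partial x_{2}}$, in contrast to $\frac{\partial}{\partial x_{2}}$ in the ordinary case). The required inputs $(\overline{L}_{-1}{\bf 1})_{i}(\overline{L}_{-1}{\bf 1})$, $(\overline{L}_{-1}{\bf 1})_{i}(\overline{I}_{-1}{\bf 1})$, and $(\overline{I}_{-1}{\bf 1})_{i}(\overline{I}_{-1}{\bf 1})$ are easily extracted from the bracket relations (\ref{eq:3.8})--(\ref{eq:3.10}) together with the identifications $(\overline{L}_{-1}{\bf 1})_{n}=\overline{L}_{n}$ and $(\overline{I}_{-1}{\bf 1})_{n}=\overline{I}_{n}$, with $\overline{L}_{n}{\bf 1}=\overline{I}_{n}{\bf 1}=0$ for $n\geq 0$. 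The nonzero ones are the same as those computed inside the proof of Theorem \ref{coordinated mod1}, for instance $(\overline{L}_{-1}{\bf 1})_{3}(\overline{L}_{-1}{\bf 1})=\tfrac{\ell_{1}}{2}{\bf 1}$, $(\overline{L}_{-1}{\bf 1})_{1}(\overline{L}_{-1}{\bf 1})=2\,\overline{L}_{-1}{\bf 1}$, $(\overline{L}_{-1}{\bf 1})_{0}(\overline{L}_{-1}{\bf 1})=\overline{L}_{-2}{\bf 1}$, and analogously for the other pairings.

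The remaining step is to rewrite the terms $Y_W(\overline{L}_{-2}{\bf 1},x)$ and $Y_W(\overline{I}_{-2}{\bf 1},x)$ that appear on the right of the commutator formula. Since $\overline{L}_{-2}{\bf 1}=\overline{d}(\overline{L}_{-1}{\bf 1})$ and $\overline{I}_{-2}{\bf 1}=\overline{d}(\overline{I}_{-1}{\bf 1})$, and since for $\phi$-coordinated modules with $\phi(x,z)=xe^{z}$ the translation operator $\overline{d}$ of $V_{\mathfrak{L}}(\ell_{123},0)$ acts on $W$ as $x\frac{\partial}{\partial x}$, one obtains $Y_W(\overline{L}_{-2}{\bf 1},x)=x\partial_{x}\widehat{L}(x)$ and $Y_W(\overline{I}_{-2}{\bf 1},x)=x\partial_{x}\widehat{I}(x)$. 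Plugging everything into the commutator formula and collecting terms reproduces (\ref{eq:3.5})--(\ref{eq:3.7}) exactly.

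The main (and only) technical obstacle is correctly using the $\phi$-coordinated form of the commutator/Jacobi identity, in particular keeping $x_{2}\frac{\partial}{\partial x_{2}}$ rather than $\frac{\partial}{\partial x_{2}}$ on the delta functions so as to recover the delta-function shape in (\ref{eq:3.5})--(\ref{eq:3.7}); with this bookkeeping in place the verification is mechanical and requires no further input beyond the computations already present in the proof of Theorem \ref{coordinated mod1}.
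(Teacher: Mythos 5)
Your proposal is correct and follows essentially the same route as the paper: compute the products $(\overline{L}_{-1}{\bf 1})_{i}(\overline{L}_{-1}{\bf 1})$, $(\overline{L}_{-1}{\bf 1})_{i}(\overline{I}_{-1}{\bf 1})$, $(\overline{I}_{-1}{\bf 1})_{i}(\overline{I}_{-1}{\bf 1})$ from the brackets of $\mathfrak{L}$, feed them into the $\phi$-coordinated commutator formula of \cite{Li4} (Proposition 5.9), use $Y_{W}(\overline{d}v,x)=x\frac{\partial}{\partial x}Y_{W}(v,x)$ to identify the derivative terms, match against (\ref{eq:3.5})--(\ref{eq:3.7}), and read off restrictedness from $Y_{W}(v,x)\in\mathrm{Hom}(W,W((x)))$. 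The bookkeeping point you flag (keeping $x_{2}\frac{\partial}{\partial x_{2}}$ on the delta functions) is exactly what the paper's displayed computations implement.
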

\begin{proof}
For $\overline{L}_{-1},\overline{I}_{-1}\in\mathfrak{L}$,
 since $Y(\overline{L}_{-1}{\bf 1},x)=\overline{L}(x)$, $Y(\overline{I}_{-1}{\bf 1},x)=\overline{I}(x)$,
from the identities (\ref{eq:3.11}) to (\ref{eq:3.13}), by using (\ref{eq:2.1}) we see that
$$(x_{1}-x_{2})^{4}[Y(\overline{L}_{-1}{\bf 1},x_{1}),Y(\overline{L}_{-1}{\bf 1},x_{2})]=0,$$
$$(x_{1}-x_{2})^{3}[Y(\overline{L}_{-1}{\bf 1},x_{1}),Y(\overline{I}_{-1}{\bf 1},x_{2})]=0,$$
$$(x_{1}-x_{2})^{2}[Y(\overline{I}_{-1}{\bf 1},x_{1}),Y(\overline{I}_{-1}{\bf 1},x_{2})]=0.$$
Note that for $i\geq 0$, we have
\begin{eqnarray*}
&&{}(\overline{L}_{-1}{\bf 1})_{i}\overline{L}_{-1}{\bf 1}=\overline{L}_{i}\overline{L}_{-1}{\bf 1}=[\overline{L}_{i}, \overline{L}_{-1}]{\bf 1}\nonumber\\
&&{}=(i+1)\overline{L}_{i-2}{\bf 1}+\frac{i(i-1)(i-2)}{12}\delta_{i-3,0}\ell_{1}{\bf 1}.
\end{eqnarray*}
and
\begin{eqnarray*}
&&{}(\overline{L}_{-1}{\bf 1})_{i}\overline{I}_{-1}{\bf 1}=\overline{L}_{i}\overline{I}_{-1}{\bf 1}=[\overline{L}_{i}, \overline{I}_{-1}]{\bf 1}\nonumber\\
&&{}=\overline{I}_{i-2}{\bf 1}-(i^{2}-i)\delta_{i-2,0}\ell_{2}{\bf 1}.
\end{eqnarray*}
and
\begin{eqnarray*}
&&{}(\overline{I}_{-1}{\bf 1})_{i}\overline{I}_{-1}{\bf 1}=\overline{I}_{i}\overline{I}_{-1}{\bf 1}=[\overline{I}_{i}, \overline{I}_{-1}]{\bf 1}\nonumber\\
&&{}=i\delta_{i-1,0}\ell_{3}{\bf 1}.
\end{eqnarray*}
By Proposition 5.9 of \cite{Li4}, we have
\begin{eqnarray*}
&&{}[Y_{W}(\overline{L}_{-1}{\bf 1},x_{1}),Y_{W}(\overline{L}_{-1}{\bf 1},x_{2})] \nonumber\\
&&{}=\mbox{Res}_{x_{0}}x_{1}^{-1}\delta\left(\frac{x_{2}e^{x_{0}}}{x_{1}}\right)
    x_{2}e^{x_{0}}Y_{W}(Y(\overline{L}_{-1}{\bf 1},x_{0})\overline{L}_{-1}{\bf 1},x_{2})\nonumber\\
 &&{}=Y_{W}(\overline{L}_{-2}{\bf 1}, x_{2})\delta\left(\frac{x_{2}}{x_{1}}\right)
      +2Y_{W}(\overline{L}_{-1}{\bf 1},x_{2})\left(x_{2}\frac{\partial}{\partial x_{2}}\right)\delta\left(\frac{x_{2}}{x_{1}}\right)
                             \nonumber \\
 &&{} \;\quad +\frac{\ell_{1}{\bf 1}_{W}}{12}\left(x_{2}\frac{\partial}{\partial x_{2}}\right)^{3}\delta\left(\frac{x_{2}}{x_{1}}\right),
\end{eqnarray*}
and
\begin{eqnarray*}
&&{}[Y_{W}(\overline{L}_{-1}{\bf 1},x_{1}),Y_{W}(\overline{I}_{-1}{\bf 1},x_{2})] \nonumber\\
&&{}=\mbox{Res}_{x_{0}}x_{1}^{-1}\delta\left(\frac{x_{2}e^{x_{0}}}{x_{1}}\right)
    x_{2}e^{x_{0}}Y_{W}(Y(\overline{L}_{-1}{\bf 1},x_{0})\overline{I}_{-1}{\bf 1},x_{2})\nonumber\\
 &&{}=Y_{W}(\overline{I}_{-2}{\bf 1},x_{2})\delta\left(\frac{x_{2}}{x_{1}}\right)
          + Y_{W}(\overline{I}_{-1}{\bf 1}, x_{2})\left(x_{2}\frac{\partial}{\partial x_{2}}\right)\delta\left(\frac{x_{2}}{x_{1}}\right)
                                 \nonumber\\
  &&{}   \quad\;  -\left(x_{2}\frac{\partial}{\partial x_{2}}\right)^{2}\delta\left(\frac{x_{2}}{x_{1}}\right)\ell_{2}{\bf 1}_{W},
\end{eqnarray*}
and
\begin{eqnarray*}
&&{}[Y_{W}(\overline{I}_{-1}{\bf 1},x_{1}),Y_{W}(\overline{I}_{-1}{\bf 1},x_{2})] \nonumber\\
&&{}=\mbox{Res}_{x_{0}}x_{1}^{-1}\delta\left(\frac{x_{2}e^{x_{0}}}{x_{1}}\right)
    x_{2}e^{x_{0}}Y_{W}(Y(\overline{I}_{-1}{\bf 1},x_{0})\overline{I}_{-1}{\bf 1},x_{2})\nonumber\\
&&{}=\left(x_{2}\frac{\partial}{\partial x_{2}}\right)\delta\left(\frac{x_{2}}{x_{1}}\right)\ell_{3}{\bf 1}_{W},
\end{eqnarray*}
Note that for a $\phi$-coordinated module, by Lemma 3.7 of \cite{Li4},
$$Y_{W}(\overline{L}_{-2}{\bf 1},x)=Y_{W}(d(\overline{L}_{-1}{\bf 1}),x)=x\frac{\partial }{\partial x}Y_{W}(\overline{L}_{-1}{\bf 1},x)$$
and
$$Y_{W}(\overline{I}_{-2}{\bf 1},x)=Y_{W}(d(\overline{I}_{-1}{\bf 1}),x)=x\frac{\partial }{\partial x}Y_{W}(\overline{I}_{-1}{\bf 1},x).$$
 Then $W$ is a $\mathcal{L}$-module of {\em level} $\ell_{123}$ with
$\widehat{L}(x)=Y_{W}(\overline{L}_{-1}{\bf 1},x)$,
 and $\widehat{I}(x)=Y_{W}(\overline{I}_{-1}{\bf 1},x)$ for $\overline{L}_{-1},\overline{I}_{-1}\in\mathfrak{L}$.
Since $W$ is a $\phi$-coordinated $V_{\mathfrak{L}}(\ell_{123},0)$-module,
by definition $Y_{W}(\overline{L}_{-1}{\bf 1},x)$, $Y_{W}(\overline{I}_{-1}{\bf 1},x)\in\mathcal{E}(W)$.
Therefore, $W$ is a restricted $\mathcal{L}$-module of level $\ell_{123}$.
\end{proof}

\section{Structures on twisted Heisenberg-Virasoro vertex operator algebra $V_{\mathcal{L}}(\ell_{123},0)$ and its irreducible modules}
\def\theequation{3.\arabic{equation}}
\setcounter{equation}{0}

In this section, we first show that $V_{\mathcal{L}}(\ell_{123},0)$ is actually a vertex operator algebra
 and characterize its irreducible modules.
Then we study the structure theory of $V_{\mathcal{L}}(\ell_{123},0)$
and get the corresponding results of the simple vertex operator algebra coming from it,
including Zhu's algebra, rationality,
$C_{2}$-cofiniteness, regularity, unitarity and the commutant of the Heisenberg vertex operator algebra in them,
in the process, we get a result that $V_{\mathcal{L}}(\ell_{123},0)$ can be characterized as a tensor product vertex operator algebra.

\subsection{Vertex operator algebra $V_{\mathcal{L}}(\ell_{123},0)$ and its irreducible modules}

For a $\Z$-graded Lie algebra $\mathfrak{g}=\coprod_{n\in\Z}\mathfrak{g}_{(n)}$,
a {\em $\C$-graded $\mathfrak{g}$-module} is a $\mathfrak{g}$-module $W$ equipped with
a $\C$-grading $W=\coprod_{r\in\C}W_{(r)}$ such that
\begin{eqnarray}
\mathfrak{g}_{(n)}W_{(r)}\subset W_{(n+r)}\;\;\mbox{for}\;n\in\Z,\;r\in\C.
\end{eqnarray}

From the above section, we know that $V_{\mathcal{L}}(\ell_{123},0)$ is $\Z$-graded by $L_{0}$-eigenvalues,
and clearly, the two grading restriction conditions in the definition of vertex
operator algebra hold for $V_{\mathcal{L}}(\ell_{123},0)$.
It is also clear that $V_{\mathcal{L}}(\ell_{123},0)$ is a restricted $\mathcal{L}$-module, a restricted $Vir$-module (and also a restricted $\mathcal{H}$-module).
By Theorem 5.7.4 of \cite{LL},
in order to say that it has a vertex operator algebra structure,
it remains to check (5.7.22)
or (5.7.23), this is straightforward, so we have
\begin{prop}
$(V_{\mathcal{L}}(\ell_{123},0), Y, {\bf{1}}, {L_{-2}}{\bf{1}})$ is a vertex operator algebra with conformal vector $\omega=L_{-2}{\bf 1}$
and of central charge $\ell_{1}.$
\end{prop}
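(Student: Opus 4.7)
The plan is to invoke Theorem 5.7.4 of \cite{LL}, which promotes a vertex algebra to a vertex operator algebra once one supplies a conformal vector of the claimed central charge and checks a short list of grading and $L(-1)$-compatibility conditions. I take $\omega = L_{-2}\mathbf{1}$, so that $Y(\omega,x) = L(x) = \sum_{n\in\Z} L_n x^{-n-2}$ by Theorem \ref{VA1}, and hence $\omega_n = L_{n-1}$. The Virasoro commutation relations with central charge $\ell_1$ are immediate from the Lie bracket
$[L_m,L_n] = (m-n)L_{m+n} + \delta_{m+n,0}\tfrac{m^3-m}{12} c_1$
together with the fact that $c_1$ acts on the induced module $V_{\mathcal{L}}(\ell_{123},0)$ as the scalar $\ell_1$.

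Next I would set up the grading. Since $[L_0,L_{-n}] = n L_{-n}$ and $[L_0,I_{-n}] = nI_{-n}$, the operator $L_0 = \omega_1$ acts diagonalizably on the PBW basis $I_{-k_1}\cdots I_{-k_s}L_{-m_1}\cdots L_{-m_r}\mathbf{1}$ with eigenvalue $\sum_i m_i + \sum_j k_j$. This gives a $\mathbb{Z}_{\geq 0}$-grading $V_{\mathcal{L}}(\ell_{123},0) = \coprod_{n\geq 0} V_{\mathcal{L}}(\ell_{123},0)_{(n)}$, agreeing with the decomposition written down after Theorem \ref{VA1}. The dimension of the $n$-th piece is bounded by (twice) the number of partitions of $n$, so each $V_{\mathcal{L}}(\ell_{123},0)_{(n)}$ is finite-dimensional, and the grading is clearly bounded below. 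These are the two grading restriction conditions.

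The only nontrivial point left is the $L(-1)$-derivative property, namely that $L_{-1}$ coincides with the translation operator $\mathcal{D}$ of the vertex algebra, equivalently $\frac{d}{dx}Y(v,x) = Y(L_{-1}v,x)$ for all $v$. Since Theorem \ref{VA1} tells us that $V_{\mathcal{L}}(\ell_{123},0)$ is generated as a vertex algebra by $T = \{L_{-2}\mathbf{1},\, I_{-1}\mathbf{1}\}$, it is enough to check this identity on the generators. A direct calculation shows
\begin{align*}
\mathcal{D}(L_{-2}\mathbf{1}) &= \omega_{-2}\mathbf{1} = L_{-3}\mathbf{1} = [L_{-1},L_{-2}]\mathbf{1} = L_{-1}(L_{-2}\mathbf{1}),\\
\mathcal{D}(I_{-1}\mathbf{1}) &= (I_{-1}\mathbf{1})_{-2}\mathbf{1} = I_{-2}\mathbf{1} = [L_{-1},I_{-1}]\mathbf{1} = L_{-1}(I_{-1}\mathbf{1}).
\end{align*}
Both $L_{-1}$ and $\mathcal{D}$ are derivations of the vertex algebra (for $\mathcal{D}$ this is part of the axioms; for $L_{-1}$ it comes from $[L_{-1},Y(u,x)]=\frac{d}{dx}Y(u,x)$ on the generators and induction through the reconstruction formula in Theorem \ref{VA1}), and they agree on $\mathbf{1}$ and on $T$, so they agree on all of $V_{\mathcal{L}}(\ell_{123},0)$.

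The main obstacle is precisely this last $L(-1)=\mathcal{D}$ check, but because our algebra is generated by only two elements and the commutators $[L_{-1},L_{-2}]$ and $[L_{-1},I_{-1}]$ match $\mathcal{D}$ exactly, it collapses to the two lines above. With the Virasoro relations, the $L_0$-grading, the two grading restrictions, and the $L(-1)$-derivative property all in place, Theorem 5.7.4 of \cite{LL} yields that $(V_{\mathcal{L}}(\ell_{123},0), Y, \mathbf{1}, L_{-2}\mathbf{1})$ is a vertex operator algebra of central charge $\ell_1$.
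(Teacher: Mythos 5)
Your argument is correct and takes essentially the same route as the paper: the paper also observes the $L_{0}$-grading with the two grading restrictions and then invokes Theorem 5.7.4 of \cite{LL}, leaving only condition (5.7.22)/(5.7.23) --- the $L(-1)=\mathcal{D}$ identity --- to check, which is exactly the computation on the generators $L_{-2}{\bf 1}$ and $I_{-1}{\bf 1}$ that you carry out. (Your parenthetical bound ``twice the number of partitions of $n$'' for $\dim V_{\mathcal{L}}(\ell_{123},0)_{(n)}$ is not exact --- the graded piece is indexed by pairs of partitions --- but finite-dimensionality is clear regardless, so nothing is affected.)
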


\begin{rmk}
{\em
$V_{\mathcal{L}}(\ell_{123},0)$ is generated by the conformal vector $\omega=L_{-2}{\bf{1}}$ and $I=I_{-1}{\bf{1}}$, it is not a
minimal vertex operator algebra since it has a proper vertex operator subalgebra $V_{Vir}(\ell_{1},0)$ (with the same conformal vector $\omega$).
}
\end{rmk}

We now investigate the modules of $V_{\mathcal{L}}(\ell_{123},0)$ viewed as a vertex operator algebra.
By Theorem \ref{Mod1}, if further $W$ is $\C$-graded by $L_{0}$-eigenvalues, then $W$ is a module for $V_{\mathcal{L}}(\ell_{123},0)$ viewed as a vertex operator algebra,
possibly without the two grading restrictions.

By Theorem \ref{Mod1} and Theorem \ref{Mod2} we have
\begin{thm}\label{Mod3}
 The modules for $V_{\mathcal{L}}(\ell_{123},0)$ viewed as a vertex operator algebra
(i.e. $\C$-graded by $L_{0}$-eigenvalues and with the two grading restrictions) are exactly those restricted modules
for the Lie algebra $\mathcal{L}$ of level $\ell_{123}$ that are $\C$-graded by $L_{0}$-eigenvalues and with the two grading restrictions.
Furthermore, for any $V_{\mathcal{L}}(\ell_{123},0)$-module $W$, the $V_{\mathcal{L}}(\ell_{123},0)$-submodules of $W$ are exactly
the submodules of $W$ for $\mathcal{L}$, and these submodules are in particular graded.
\end{thm}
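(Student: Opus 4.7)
The strategy is to derive Theorem~\ref{Mod3} directly from the bijective correspondence already established in Theorems~\ref{Mod1} and \ref{Mod2}, and then to verify that gradings and submodule structures transfer cleanly. The plan has three stages.

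\emph{Stage 1 (Equivalence of module structures).} Let $W$ be a $V_{\mathcal{L}}(\ell_{123},0)$-module in the vertex operator algebra sense. By Theorem~\ref{Mod2}, $W$ is automatically a restricted $\mathcal{L}$-module of level $\ell_{123}$ with $L(x)=Y_W(L_{-2}\mathbf{1},x)$ and $I(x)=Y_W(I_{-1}\mathbf{1},x)$. Since $L_0 = \omega_1$ acts as the grading operator, the $\C$-grading by $L_0$-eigenvalues and the two grading restrictions are built into the definition of a VOA module. Conversely, starting from a restricted $\mathcal{L}$-module $W$ of level $\ell_{123}$ that is $\C$-graded by $L_0$-eigenvalues and satisfies the grading restrictions, Theorem~\ref{Mod1} provides a unique vertex-algebra module structure on $W$. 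One then only needs to observe that $\omega_1 = L_0$ acts as the degree operator, so the VOA-module axioms are satisfied. This gives the first assertion.

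\emph{Stage 2 (Equality of submodule lattices).} Let $W$ be a $V_{\mathcal{L}}(\ell_{123},0)$-module. The inclusion ``$V_{\mathcal{L}}(\ell_{123},0)$-submodule $\Rightarrow$ $\mathcal{L}$-submodule'' is immediate since all $L_n$ and $I_n$ arise as coefficients of the vertex operators $Y_W(L_{-2}\mathbf{1},x)$ and $Y_W(I_{-1}\mathbf{1},x)$. For the reverse inclusion, I will use the fact (established in Theorem~\ref{VA1}) that $\{L_{-2}\mathbf{1}, I_{-1}\mathbf{1}\}$ generates $V_{\mathcal{L}}(\ell_{123},0)$ as a vertex algebra, together with formula~(\ref{eq:2.18}), which expresses $Y_W(v,x)$ for an arbitrary PBW-type basis element $v$ as an iterated product of the modes $L(x)_{n+1}$ and $I(x)_n$. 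Hence any subspace of $W$ stable under all $L_n, I_n$ is automatically stable under $Y_W(v,x)$ for every $v\in V_{\mathcal{L}}(\ell_{123},0)$, i.e.\ is a VOA-submodule.

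\emph{Stage 3 (Gradedness of submodules).} Finally, let $U$ be a submodule of $W$ in either of these equivalent senses. Since $W=\coprod_{r\in\C}W_{(r)}$ is a direct sum of $L_0$-eigenspaces and $L_0 \in \mathcal{L}$ acts on $U$, the standard argument (Vandermonde inversion applied to the equations $L_0^k u \in U$ for $k\ge 0$, where $u\in U$ has only finitely many homogeneous components) shows $U = \coprod_{r\in\C}(U\cap W_{(r)})$, so $U$ is graded. The only subtle step is this Stage~3 verification; it is routine but is the one place where one must use the full decomposition $W=\coprod W_{(r)}$ rather than just module-theoretic closure. Once it is in hand, all claims of the theorem follow.
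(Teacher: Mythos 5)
Your proposal is correct and follows exactly the route the paper intends: the paper derives Theorem~\ref{Mod3} by simply invoking Theorems~\ref{Mod1} and~\ref{Mod2}, and your Stages 1--3 supply the routine details (transfer of the $L_0$-grading, equality of submodule lattices via the generating set $\{L_{-2}\mathbf{1},I_{-1}\mathbf{1}\}$ and formula~(\ref{eq:2.18}), and gradedness of submodules via the semisimple action of $L_0$) that the paper leaves implicit. No gaps.
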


Next, we will modify the construction of the $\mathcal{L}$-module $V_{\mathcal{L}}(\ell_{123},0)$ to get a certain natural family of restricted
$\mathcal{L}$-modules of level $\ell_{123}$ that are $\C$-graded by $L_{0}$-eigenvalues and satisfy the two grading restrictions.
Note such $\mathcal{L}$-modules are naturally modules for the vertex operator algebra $V_{\mathcal{L}}(\ell_{123},0)$ by the above theorem.

Consider $\C$ as an $\mathcal{L}_{(0)}$-module
with $c_{i}$ acting as the scalar $\ell_{i}$, $i=1, 2, 3$,
$L_{0}$ acting as $h_{1}$ and $I_{0}$ acting as $h_{2}$,
where $\ell_{1}, \ell_{2}, \ell_{3}, h_{1}, h_{2}\in\C$.

Let $\mathcal{L}_{(-)}$ acting trivially on $\C$, making
$\C$ an $(\mathcal{L}_{(-)}\oplus \mathcal{L}_{(0)})$-module, which we denote by $\C_{\ell_{123},h_{12}}$.
Form the induced module
\begin{eqnarray}
M_{\mathcal{L}}(\ell_{123},h_{1},h_{2})=U(\mathcal{L})\otimes_{U(\mathcal{L}_{(-)}\oplus \mathcal{L}_{(0)})}\C_{\ell_{123},h_{12}}.
\end{eqnarray}

Again from the Poincare-Birkhoff-Witt theorem, as a vector space we have
\begin{eqnarray}
M_{\mathcal{L}}(\ell_{123},h_{1},h_{2})=U(\mathcal{L}_{(+)})\otimes\C_{\ell_{123},h_{12}}
=U(\mathcal{L}_{(+)})\simeq S(\mathcal{L}_{(+)}).
\end{eqnarray}
We naturally consider $\C_{\ell_{123},h_{12}}$ as a subspace of $M_{\mathcal{L}}(\ell_{123},h_{1},h_{2})$ and we set
$${\bf{1}}_{(\ell_{123},h_{12})}=1\in \C_{\ell_{123},h_{12}}\subset M_{\mathcal{L}}(\ell_{123},h_{1},h_{2}).$$

Then
\begin{eqnarray*}
M_{\mathcal{L}}(\ell_{123},h_{1},h_{2})=\coprod_{n\geq 0}M_{\mathcal{L}}(\ell_{123},h_{1},h_{2})_{(n+h_{1})},
\end{eqnarray*}
where $M_{\mathcal{L}}(\ell_{123},h_{1},h_{2})_{(h_{1})}=\C_{\ell_{123},h_{12}}$
and $M_{\mathcal{L}}(\ell_{123},h_{1},h_{2})_{(n+h_{1})}$ for $n\geq 1$ is the $L_{0}$-eigenspace of eigenvalue $n+h_{1}$,
has a basis consisting of the vectors
$$I_{-k_{1}}\cdots I_{-k_{s}}L_{-m_{1}}\cdots L_{-m_{r}}{\bf{1}}_{(\ell_{123},h_{12})} $$
for $r$, $s\geq 0$,
 $m_{1}\geq\cdots\geq m_{r}\geq 1$, $k_{1}\geq\cdots\geq k_{s}\geq 1$ with $\sum\limits_{i=1}^{r}m_{i}+\sum\limits_{j=1}^{s}k_{j}=n.$
Hence, as with $V_{\mathcal{L}}(\ell_{123},0)$, $M_{\mathcal{L}}(\ell_{123},h_{1},h_{2})$, as a module for $\mathcal{L}$ of level $\ell_{123}$,
is $\C$-graded by $L_{0}$-eigenvalues (Note $\Z$-graded is clearly $\C$-graded).

Consequently, $M_{\mathcal{L}}(\ell_{123},h_{1},h_{2})$ with the given $\C$-grading satisfies
the grading restriction conditions.
This in particular implies that $M_{\mathcal{L}}(\ell_{123},h_{1},h_{2})$ is a restricted $\mathcal{L}$-module.

Thus from Theorem \ref{Mod3} we immediately have:
\begin{thm}
For any complex numbers $\ell_{1}, \ell_{2}, \ell_{3}, h_{1}, h_{2}$,
$W=M_{\mathcal{L}}(\ell_{123},h_{1},h_{2})$ has a unique module structure for the
vertex operator algebra $V_{\mathcal{L}}(\ell_{123},0)$ such that $Y_{W}(\omega,x)=L(x)$ and $Y_{W}(I_{-1}{\bf{1}},x)=I(x).$
\end{thm}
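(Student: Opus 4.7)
The plan is to combine the construction immediately preceding the theorem with Theorem~\ref{Mod3} (the characterization of $V_{\mathcal{L}}(\ell_{123},0)$-modules as $\mathbb{C}$-graded restricted $\mathcal{L}$-modules of level $\ell_{123}$ satisfying the two grading restrictions). Essentially nothing new needs to be proved: all the structural input has already been assembled. The task reduces to verifying that $W=M_{\mathcal{L}}(\ell_{123},h_1,h_2)$ satisfies each hypothesis of Theorem~\ref{Mod3}, then invoking that theorem for existence, and finally using the generating set $T=\{L_{-2}\mathbf{1},I_{-1}\mathbf{1}\}$ of $V_{\mathcal{L}}(\ell_{123},0)$ for uniqueness.

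First I would note that by construction $c_i$ acts as $\ell_i$ on $W$, so $W$ is an $\mathcal{L}$-module of level $\ell_{123}$. The PBW description of $M_{\mathcal{L}}(\ell_{123},h_1,h_2)$ already displayed, namely the decomposition
\[
M_{\mathcal{L}}(\ell_{123},h_1,h_2)=\coprod_{n\geq 0}M_{\mathcal{L}}(\ell_{123},h_1,h_2)_{(n+h_1)}
\]
with each summand the $L_0$-eigenspace of eigenvalue $n+h_1$ and with explicit finite basis, gives the $\mathbb{C}$-grading by $L_0$-eigenvalues. This grading is truncated below at $h_1$ and each homogeneous piece is finite-dimensional, so the two grading restrictions hold. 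Restrictedness (i.e.\ $L(x),I(x)\in\mathcal{E}(W)$) follows because $L_n$ and $I_n$ raise the $L_0$-degree by $-n$, and for fixed $w\in M_{\mathcal{L}}(\ell_{123},h_1,h_2)_{(n+h_1)}$ the vectors $L_m w$ and $I_m w$ lie in the zero subspace once $m$ is sufficiently large, since the grading is non-negative in $n$.

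Now Theorem~\ref{Mod1} produces a $V_{\mathcal{L}}(\ell_{123},0)$-module structure on $W$ satisfying $Y_W(L_{-2}\mathbf{1},x)=L(x)$ and $Y_W(I_{-1}\mathbf{1},x)=I(x)$, and Theorem~\ref{Mod3} ensures this structure genuinely qualifies as a module for $V_{\mathcal{L}}(\ell_{123},0)$ viewed as a vertex operator algebra (not merely a vertex algebra), since the $\mathbb{C}$-grading and grading restrictions have been verified. For uniqueness, recall from Theorem~\ref{VA1} that $T=\{L_{-2}\mathbf{1},I_{-1}\mathbf{1}\}$ generates $V_{\mathcal{L}}(\ell_{123},0)$ as a vertex algebra; any two module structures agreeing on $T$ must agree on all of $V_{\mathcal{L}}(\ell_{123},0)$ by Dong-Lepowsky's generation principle (Proposition 5.7.9 of \cite{LL}).

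There is no genuine obstacle here; the theorem is a packaging statement. The only mild subtlety worth stating carefully is the verification of restrictedness directly from the $L_0$-eigenspace decomposition, which some readers might want to see spelled out, but it is immediate from the fact that the grading is bounded below and $\mathcal{L}_{(-n)}$ shifts the $L_0$-degree by $-n$.
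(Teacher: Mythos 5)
Your proposal is correct and follows essentially the same route as the paper: the paper likewise verifies in the preceding paragraphs that $M_{\mathcal{L}}(\ell_{123},h_{1},h_{2})$ is a restricted $\mathcal{L}$-module of level $\ell_{123}$, $\C$-graded by $L_{0}$-eigenvalues with the grading restrictions, and then deduces the theorem immediately from Theorem \ref{Mod3} (with uniqueness already built into Theorem \ref{Mod1} via the generating set $\{L_{-2}{\bf 1},I_{-1}{\bf 1}\}$). The only quibble is attribution: Proposition 5.7.9 is from Lepowsky--Li's book, not Dong--Lepowsky.
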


\begin{rmk}{\em
The $\mathcal{L}$-module $M_{\mathcal{L}}(\ell_{123},h_{1},h_{2})$ is commonly referred
to in the literature as the {\em Verma module} in the papers \cite{ACKP}, \cite{B1}).
As a module for $\mathcal{L}$, $M_{\mathcal{L}}(\ell_{123},h_{1},h_{2})$ is generated by ${\bf{1}}_{(\ell_{123},h_{12})}$
with the relations
\begin{eqnarray*}
 L_{0}{\bf{1}}_{(\ell_{123},h_{12})}=h_{1}{\bf{1}}_{(\ell_{123},h_{12})},
 \;I_{0}{\bf{1}}_{(\ell_{123},h_{12})}= h_{2}{\bf{1}}_{(\ell_{123},h_{12})},
 \; c_{i}=\ell_{i}, i=1,2,3,
\end{eqnarray*}
\begin{eqnarray*}
\mbox{and}\;\;L_{n} {\bf{1}}_{(\ell_{123},h_{12})}=0,
\; I_{n} {\bf{1}}_{(\ell_{123},h_{12})}=0\;\mbox{for}\; n\geq 1.
\end{eqnarray*}

$M_{\mathcal{L}}(\ell_{123},h_{1},h_{2})$ is {\em universal} in the sense that for any module $W$ for $\mathcal{L}$ of level $\ell_{123}$ equipped with
a vector $v$ such that $L_{0}v=h_{1}v, I_{0}v=h_{2}v, L_{n}v=0, I_{n}v=0$ for all $n\geq 1$, there exists a unique module map
$M_{\mathcal{L}}(\ell_{123},h_{1},h_{2})\longrightarrow W$ sending ${\bf{1}}_{(\ell_{123},h_{12})}$ to $v$.}
\end{rmk}



In general, $M_{\mathcal{L}}(\ell_{123},h_{1},h_{2})$ as a module for $\mathcal{L}$ may be reducible,
in which case it is a reducible $V_{\mathcal{L}}(\ell_{123},0)$-module by Theorem \ref{Mod3} (or Proposition 4.5.17 in \cite{LL}).
Since $M_{\mathcal{L}}(\ell_{123},h_{1},h_{2})_{(h_{1})}(=\C_{\ell_{123},h_{12}})$ generates $M_{\mathcal{L}}(\ell_{123},h_{1},h_{2})$,
for any proper submodule $U$, $U$ is graded by Theorem \ref{Mod3}, and
$$U_{(h_{1})}=U\cap M_{\mathcal{L}}(\ell_{123},h_{1},h_{2})_{(h_{1})}=0.$$
Let $T_{\mathcal{L}}(\ell_{123},h_{1},h_{2})$ be the sum of all the proper $\mathcal{L}$-submodules of $M_{\mathcal{L}}(\ell_{123},h_{1},h_{2})$, it is also graded.
Then $T_{\mathcal{L}}(\ell_{123},h_{1},h_{2})_{(h_{1})}=0$, and so $T_{\mathcal{L}}(\ell_{123},h_{1},h_{2})$ is also proper and is the largest
proper submodule. Set
\begin{eqnarray}
L_{\mathcal{L}}(\ell_{123},h_{1},h_{2})=M_{\mathcal{L}}(\ell_{123},h_{1},h_{2})/T_{\mathcal{L}}(\ell_{123},h_{1},h_{2}),
\end{eqnarray}
then $L_{\mathcal{L}}(\ell_{123},h_{1},h_{2})$ is an irreducible $\mathcal{L}$-module.

By Theorem \ref{Mod3}, $T_{\mathcal{L}}(\ell_{123},h_{1},h_{2})$ is also the (unique) largest proper $V_{\mathcal{L}}(\ell_{123},0)$-submodule
of $M_{\mathcal{L}}(\ell_{123},h_{1},h_{2})$, so that $L_{\mathcal{L}}(\ell_{123},h_{1},h_{2})$ is an irreducible $V_{\mathcal{L}}(\ell_{123},0)$-module.

\begin{thm}\label{irred}
For any complex numbers $\ell_{1},\ell_{2},\ell_{3},h_{1},h_{2}$, $L_{\mathcal{L}}(\ell_{123},h_{1},h_{2})$ is an irreducible
module for the vertex operator algebra $V_{\mathcal{L}}(\ell_{123},0)$.
Furthermore, the modules $L_{\mathcal{L}}(\ell_{123},h_{1},h_{2})$ for $h_{1}, h_{2}\in\C$ exhaust
those irreducible (vertex operator algebra) $V_{\mathcal{L}}(\ell_{123},0)$-modules.
\end{thm}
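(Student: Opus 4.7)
The plan has two parts: establishing that each $L_{\mathcal{L}}(\ell_{123},h_1,h_2)$ is an irreducible $V_{\mathcal{L}}(\ell_{123},0)$-module, and conversely showing that every irreducible $V_{\mathcal{L}}(\ell_{123},0)$-module is isomorphic to one of these. The first half has essentially been prepared in the discussion preceding the theorem: by construction, $L_{\mathcal{L}}(\ell_{123},h_1,h_2)$ is an irreducible $\mathcal{L}$-module of level $\ell_{123}$; it inherits the $\mathbb{C}$-grading by $L_0$-eigenvalues from $M_{\mathcal{L}}(\ell_{123},h_1,h_2)$, and this grading satisfies the two grading restrictions (finite-dimensional homogeneous pieces and lower truncation). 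Thus I would invoke Theorem \ref{Mod3} to promote it to an irreducible module for the vertex operator algebra $V_{\mathcal{L}}(\ell_{123},0)$, using the fact stated there that $V_{\mathcal{L}}(\ell_{123},0)$-submodules coincide with $\mathcal{L}$-submodules.

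For the converse, let $W$ be an arbitrary irreducible $V_{\mathcal{L}}(\ell_{123},0)$-module. First I would apply Theorem \ref{Mod3} to view $W$ as a restricted $\mathcal{L}$-module of level $\ell_{123}$, $\mathbb{C}$-graded by $L_0$-eigenvalues, with finite-dimensional homogeneous subspaces and lower truncation. By lower truncation there exists a minimal $h_1\in\mathbb{C}$ (with respect to the real-part ordering appearing in one connected component of the support) such that $W_{(h_1)}\neq 0$. For any $v\in W_{(h_1)}$, the relations $L_n v\in W_{(h_1-n)}$ and $I_n v\in W_{(h_1-n)}$ force $L_n v = I_n v = 0$ for all $n\geq 1$, and $L_0$ acts on $W_{(h_1)}$ as the scalar $h_1$.

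Next, the key observation is that $I_0$ commutes with $L_0$ and hence preserves the finite-dimensional subspace $W_{(h_1)}$, so it admits an eigenvector $v\in W_{(h_1)}$ with some eigenvalue $h_2\in\mathbb{C}$. This vector $v$ satisfies exactly the universal annihilation and scalar-action relations that characterize the Verma generator ${\bf 1}_{(\ell_{123},h_{12})}$, so by the universal property of $M_{\mathcal{L}}(\ell_{123},h_1,h_2)$ recalled in the remark, there is a unique $\mathcal{L}$-module homomorphism
\[
\varphi\colon M_{\mathcal{L}}(\ell_{123},h_1,h_2)\longrightarrow W
\]
sending ${\bf 1}_{(\ell_{123},h_{12})}$ to $v$. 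By Theorem \ref{Mod3}, $\varphi$ is in fact a homomorphism of $V_{\mathcal{L}}(\ell_{123},0)$-modules, its image is a nonzero $V_{\mathcal{L}}(\ell_{123},0)$-submodule of the irreducible $W$, hence equal to $W$. Therefore $W$ is a nonzero irreducible quotient of $M_{\mathcal{L}}(\ell_{123},h_1,h_2)$; since $T_{\mathcal{L}}(\ell_{123},h_1,h_2)$ is the largest proper submodule, $\ker\varphi = T_{\mathcal{L}}(\ell_{123},h_1,h_2)$ and $W\cong L_{\mathcal{L}}(\ell_{123},h_1,h_2)$.

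The main obstacle I anticipate is the extraction of the $I_0$-eigenvector $v$: one must use both grading restrictions simultaneously (lower truncation to produce a minimal weight subspace, finite-dimensionality of that subspace to diagonalize the commuting operator $I_0$). The remaining steps are essentially bookkeeping via the universal property of the Verma module and the module-category equivalence of Theorem \ref{Mod3}.
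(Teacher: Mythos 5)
Your proposal is correct and follows essentially the same route as the paper: pass to the lowest $L_{0}$-weight space, invoke the universal property of $M_{\mathcal{L}}(\ell_{123},h_{1},h_{2})$, and quotient by the maximal proper submodule $T_{\mathcal{L}}(\ell_{123},h_{1},h_{2})$. The only (harmless) difference is that you produce $h_{2}$ as an eigenvalue of $I_{0}$ on the finite-dimensional bottom space $W_{(h_{1})}$, whereas the paper observes that $I_{0}$ is central in $\mathcal{L}$ and therefore acts as a scalar on all of the irreducible module $W$ by Schur's lemma; both arguments are valid.
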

\begin{proof}
The first assertion has been showed above.
For the second assertion, let $W=\coprod_{r\in\C}W_{(r)}$ be an irreducible $V_{\mathcal{L}}(\ell_{123},0)$-module,
since $I_{0}$ is in the center, so it must acts on the irreducible module as a scalar, say
 $I_{0}$ acts on $W$ as a scalar $h_{2}$.
And by Theorem \ref{Mod3}, $W$ must be of level $\ell_{123}$, i.e. $c_{i}$ acts on $W$
as scalar $\ell_{i}$ for $i=1,2,3.$
From (4.1.22) of \cite{LL}, there exists $h_{1}\in\C$ such that $W_{(h_{1})}\neq 0$ and $W_{(h_{1}-n)}=0$ for all positive integers $n$.
Let $0\neq v\in W_{(h_{1})}.$ Then $L_{0}v=h_{1}v$ and
$L_{n}v=0$, $I_{n}v=0$ for $n\geq 1$ since $L_{n}v,I_{n}v\in W_{(h_{1}-n)}$.
Hence by the universal property of $M_{\mathcal{L}}(\ell_{123},h_{1},h_{2})$, there is a unique $\mathcal{L}$-module homomorphism
$$\psi:M_{\mathcal{L}}(\ell_{123},h_{1},h_{2})\longrightarrow W;\;{\bf{1}}_{(\ell_{123},h_{12})}\mapsto v.$$
By Proposition 4.5.1 of \cite{LL}, $\psi$ is a $V_{\mathcal{L}}(\ell_{123},0)$-module
homomorphism (since $L_{-2}\bf{1}$ and $I_{-1}\bf{1}$ generates $V_{\mathcal{L}}(\ell_{123},0)$).
Since $W$ is irreducible and $T_{\mathcal{L}}(\ell_{123},h_{1},h_{2})$ is the (unique) largest proper submodule of $M_{\mathcal{L}}(\ell_{123},h_{1},h_{2})$,
it follows that $\psi(M_{\mathcal{L}}(\ell_{123},h_{1},h_{2}))=W$ and that $Ker\;\psi=T_{\mathcal{L}}(\ell_{123},h_{1},h_{2}).$
Thus $\psi$ reduces to a $V_{\mathcal{L}}(\ell_{123},0)$-module isomorphism from $L_{\mathcal{L}}(\ell_{123},h_{1},h_{2})$ onto $W$.
\end{proof}




\begin{rmk}
{\em Similarly as in the case of $V_{\mathcal{L}}(\ell_{123},0)$, one can show that
there is a vertex operator algebra structure on $V_{\mathfrak{L}}(\ell_{123},0)$, with $\overline{L}_{-1}{\bf{1}}$ a conformal vector,
\begin{eqnarray*}
V_{\mathfrak{L}}(\ell_{123},0)=\coprod_{n\geq 0}V_{\mathfrak{L}}(\ell_{123},0)_{(n)},
\end{eqnarray*}
where $V_{\mathfrak{L}}(\ell_{123},0)_{(0)}=\C_{\ell_{123}}$ and $V_{\mathfrak{L}}(\ell_{123},0)_{(n)}$, $n\geq 1$,
 has a basis consisting of the vectors
$$\overline{I}_{-k_{1}}\cdots \overline{I}_{-k_{s}}\overline{L}_{-m_{1}}\cdots \overline{L}_{-m_{r}}{\bf{1}} $$
for $r,s\geq 0$, $m_{1}\geq\cdots\geq m_{r}\geq 1$, $k_{1}\geq\cdots\geq k_{s}\geq 1$ with $\sum\limits_{i=1}^{r}(m_{i}+1)+\sum\limits_{j=1}^{s}k_{j}=n.$
And actually $V_{\mathfrak{L}}(\ell_{123},0)$ and $V_{\mathcal{L}}(\ell_{123},0)$ are isomorphic as vertex operator algebras.
}
\end{rmk}


In the following subsections,
we will fully study the structure theory of the vertex operator algebra $V_{\mathcal{L}}(\ell_{123},0)$ and its simple descendant.

\subsection{Zhu's algebra, $C_{2}$-cofiniteness, rationality and regularity}

Recall the following notions and see for example \cite{DLM1}, \cite{DLM2}, \cite{Z} for detail.
\begin{definition}\label{C2-cofin}
{\em A vertex operator algebra $V$ is called {\em $C_{2}$-cofinite} if $dim V/C_{2}(V)<\infty$, where $C_{2}(V)=\mbox{span}\{u_{-2}v\ | \ u,v\in V\}.$
}
\end{definition}

\begin{definition}\label{rational}
{\em A vertex operator algebra $V$ is called {\em rational} if every admissible module is a direct sum of simple admissible modules.
}
\end{definition}

\begin{definition}\label{regular}
{\em A vertex operator algebra $V$ is called {\em regular} if every weak module is a direct sum of simple ordinary modules.
}
\end{definition}

For any vertex operator algebra $V$, it's {\em Zhu's algebra} is defined to be $A(V)=V/O(V)$, where $O(V)$ is the subspace of $V$ spanned
by elements
$$\{Res_{z}(Y(a,z) \frac{(1+z)^{wt\; a}}{z^{2}}b) \ | \ a,b\in V, a\;\mbox{homogeneous} \},$$
note
$$Res_{z}(Y(a,z) \frac{(1+z)^{wt\; a}}{z^{2}}b)=\sum_{i\geq 0}\binom{wt\; a}{i}a_{i-2}b,$$
with the bilinear operation $*$ on $V$ defined by
$$a*b=Res_{z}(Y(a,z) \frac{(1+z)^{wt\; a}}{z}b)=\sum_{i\geq 0}\binom{wt\; a}{i}a_{i-1}b\;\;\mbox{for}\; a\; \mbox{homogeneous}.$$

For $v\in V$, we denote the image of $v$ in $A(V)$ by $[v]$, then
$$[a]*[b]=\sum_{i\geq 0}\binom{wt\; a}{i}[a_{i-1}b]\;\;\mbox{for}\; a\; \mbox{homogeneous},$$
it follows from \cite{Z} that $(A(V), *)$ is an associative algebra with identity $[{\bf 1}].$

For any $u\in V$, denote $o(u)=u_{wt\;u-1}$.
We do not recall the correspondence between $V$-module and $A(V)$-module here,
the readers can check \cite{Z} for detail. As we need, we write down the following results.
\begin{lem}\label{O}
For homogeneous elements $a,b\in V$, and $m\geq n\geq 0$,
\begin{eqnarray*}
Res_{z}(Y(a,z) \frac{(z+1)^{wt\; a+n}}{z^{2+m}}b)\in O(V).
\end{eqnarray*}
\end{lem}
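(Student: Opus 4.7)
Abbreviate $E(a,m,n):=\mathrm{Res}_{z}\!\left(Y(a,z)\frac{(z+1)^{wt\; a+n}}{z^{m+2}}b\right)$; the goal is to show that $E(a,m,n)\in O(V)$ for every homogeneous $a\in V$, every $b\in V$, and every pair of integers $m\ge n\ge 0$. The plan is to argue by induction on $m$. The base case $m=0$ forces $n=0$, in which case $E(a,0,0)\in O(V)$ holds by the very definition of $O(V)$.

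For the inductive step the engine will be a pair of residue identities. The first is purely algebraic, coming from $(z+1)-z=1$ after multiplying numerator and denominator by $(z+1)^{wt\; a+n-1}/z^{m+2}$, and gives
$$E(a,m,n)=E(a,m,n-1)+E(a,m-1,n-1)\qquad (m,n\ge 1).$$
The second uses the fact that $\mathrm{Res}_{z}\frac{d}{dz}\bigl(Y(a,z)b\cdot f(z)\bigr)=0$ for any $f(z)\in\mathbb{C}[z,z^{-1}]$; applied with $f(z)=(z+1)^{wt\; a+n}/z^{m+1}$, and using $Y(L(-1)a,z)=\frac{d}{dz}Y(a,z)$ together with $wt\;(L(-1)a)=wt\; a+1$, it produces
$$(m+1)\,E(a,m,n)=E(L(-1)a,\,m-1,\,n-1)+(wt\; a+n)\,E(a,m-1,n-1).$$

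With these in hand, the inductive step for $m\ge 1$ will split into two cases. If $1\le n\le m$, then $(m-1,n-1)$ lies in the admissible range $m'\ge n'\ge 0$, and the induction hypothesis, applied both to $a$ and to the homogeneous vector $L(-1)a$ (of weight $wt\; a+1$), places each summand on the right of the derivative identity in $O(V)$; dividing by $m+1$ puts $E(a,m,n)$ in $O(V)$. For the remaining boundary case $n=0$ I would rewrite the algebraic identity as $E(a,m,0)=E(a,m,1)-E(a,m-1,0)$: the first term has just been handled (with $n=1\le m$), and the second lies in $O(V)$ by the induction hypothesis on $m-1$.

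The main (and really only) subtlety is the reason the inductive step needs to be split: the $L(-1)$-derivative identity lowers $n$ to $n-1$ and so escapes the admissible range precisely when $n=0$, and it is exactly the subtraction identity that bridges this single boundary case. Everything else is routine formal-residue bookkeeping, so I expect no real obstacle once the two identities are recorded.
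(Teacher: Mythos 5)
The paper does not actually prove Lemma \ref{O}; it is recalled from Zhu's paper \cite{Z} without argument, so there is no in-paper proof to compare against. Your proof is correct and is essentially the standard argument: the base case $E(a,0,0)$ is a defining generator of $O(V)$; the recursion $E(a,m,n)=E(a,m,n-1)+E(a,m-1,n-1)$ follows from $(z+1)^{k}=z(z+1)^{k-1}+(z+1)^{k-1}$; and the identity $(m+1)E(a,m,n)=E(L(-1)a,m-1,n-1)+(wt\,a+n)E(a,m-1,n-1)$ follows from $\mathrm{Res}_{z}\frac{d}{dz}=0$ together with $Y(L(-1)a,z)=\frac{d}{dz}Y(a,z)$ and $wt\,(L(-1)a)=wt\,a+1$. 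Your handling of the boundary case $n=0$ via $E(a,m,0)=E(a,m,1)-E(a,m-1,0)$ is not circular, since $E(a,m,1)$ is obtained at stage $m$ using only the stage-$(m-1)$ hypothesis; together with the fact that $O(V)$ is a subspace (so dividing by $m+1$ is harmless), the induction closes.
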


\begin{lem}
\begin{enumerate}
\item $o(u)=0$ for any $u\in O(V)$;
\item $o(u*v)=o(u)o(v)$;
\item $[u]\in A(V)$ acts on $A(V)$-module corresponds to the $o(u)$-action on the corresponding $V$-module.
\end{enumerate}
\end{lem}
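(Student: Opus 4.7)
The plan is to establish the three items in order, treating them as the foundational compatibility properties underlying Zhu's correspondence between $A(V)$ and the top level of an admissible $V$-module. All three will reduce to direct formal computation with residues and Borcherds' identities applied to the definition of $o$.

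For (1), I would first reduce to the generating elements of $O(V)$, namely $u = \mathrm{Res}_{z}Y(a,z)(1+z)^{\mathrm{wt}\, a}/z^{2}\,b$ for homogeneous $a$. Expanding in modes gives $u = \sum_{i\geq 0}\binom{\mathrm{wt}\, a}{i}a_{i-2}b$, with each summand homogeneous so that $o(a_{i-2}b)$ is a definite single mode of $a_{i-2}b$. Acting on a vector $w$ in the top level $W(0)$ of an admissible $V$-module and rewriting via the Borcherds associator formula as a sum of iterated products $a_{j}b_{k}w$, the binomial coefficients $\binom{\mathrm{wt}\, a}{i}$ balance against the weight shifts and force the surviving contributions to cancel. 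Lemma \ref{O} is used precisely to absorb the error terms that arise from shifting exponents in $(1+z)^{\mathrm{wt}\, a}$, since its extra flexibility $m\geq n\geq 0$ permits each shifted residue to be absorbed back into $O(V)$.

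For (2), I would expand both sides by the same machinery: $o(u*v) = \sum_{i\geq 0}\binom{\mathrm{wt}\, u}{i}o(u_{i-1}v)$, and then rewrite each $(u_{i-1}v)_{n}$ acting on $W(0)$ using Borcherds' associator. On $W(0)$ the dominant contribution assembles into the iterated composition $o(u)o(v)$, while all remaining contributions either annihilate $W(0)$ by weight considerations or lie in the span of elements already shown to act trivially by (1). Item (3) is then a formal consequence: (1) says that $o$ descends to a well-defined linear map $A(V)\to \End(W(0))$, (2) says this descent is multiplicative, and so any admissible $V$-module structure on $W$ induces an $A(V)$-module structure on $W(0)$ in which $[u]$ acts via $o(u)$. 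This is the standard statement of Zhu's correspondence and is the content of \cite{Z}.

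The main obstacle is combinatorial bookkeeping. In both (1) and (2) one must use the flexibility in Lemma \ref{O} to place the error terms of each successive index shift into $O(V)$, and then match the binomial sums coming from the definitions of $*$ and $O(V)$ against those produced by the Borcherds associator. Once the indexing is set up correctly, each identity reduces to a routine binomial verification; the conceptual content is small, but the formal manipulations must be arranged carefully so that the top-level weight constraints in $W(0)$ and the $O(V)$-membership statements line up at the right index.
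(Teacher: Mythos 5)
The paper offers no proof of this lemma: it is stated as a list of standard facts imported from Zhu's work, and the sentence immediately preceding it explicitly declines to recall the $V$-module/$A(V)$-module correspondence, referring the reader to \cite{Z}. So there is no in-paper argument to measure yours against; what you have reconstructed is the standard proof from \cite{Z} (see also \cite{DLM2}), and its outline is sound, with item (3) indeed a formal consequence of (1) and (2). Two imprecisions are worth correcting. First, (1) and (2) are statements about the action on the lowest graded piece $W(0)$ of an admissible module (on all of $W$, $o(u)$ for $u\in O(V)$ need not vanish); you do restrict to $W(0)$, which is the right reading of the lemma's shorthand. Second, your bookkeeping of where the cancellations come from is off in two places. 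In (1) one works directly with the spanning set $\Res_{z}Y(a,z)(1+z)^{\mathrm{wt}\,a}z^{-2}b$ of $O(V)$ and no exponent shifting occurs, so Lemma \ref{O} plays no role there; its extra flexibility $m\geq n\geq 0$ is what one needs to show that $O(V)$ is a two-sided ideal and that $*$ is associative modulo $O(V)$, which is logically prior to (3) but separate from (1). And in both (1) and (2), after applying the iterate (Borcherds) formula on $W(0)$, the terms of the form $a_{\cdot}b_{\cdot}w$ die individually by weight reasons (producing exactly $o(u)o(v)w$ in case (2)), while the surviving terms of the form $b_{\cdot}a_{j}w$ with $j\leq \mathrm{wt}\,a-1$ do not ``lie in the span of elements already shown to act trivially by (1)''; they cancel only after summing over $i$, via the finite-difference identity $\sum_{i}(-1)^{i}\binom{N}{i}p(i)=0$ for any polynomial $p$ of degree less than $N$, applied to $p(i)=\binom{i-2}{j}$ or $\binom{i-1}{j}$ with $N=\mathrm{wt}\,a$. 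With those adjustments your sketch is exactly the argument of \cite{Z} that the paper is quoting.
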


Since $I_{0}$ commutes with $L_{m}, I_{n}$ for any $m,n\in\Z$, so we have
$$I_{0}v=0,\;\;\mbox{hence}\; [I_{0}v]=[0],\;\;\mbox{for any}\;v\in V_{\mathcal{L}}(\ell_{123},0).$$

For any vertex operator algebra $V$ with conformal vector (denoted by) $\omega$,
$[\omega]$ is in the center of $A(V)$,
hence for our $V_{\mathcal{L}}(\ell_{123},0)$,
$[\omega]*[I]=[I]*[\omega]$, i.e. $[\omega]$ commutes with $[I]$.

\begin{thm}
There exists an isomorphism of associative algebras
\begin{eqnarray*}
\varphi: \C[x,y]\longrightarrow A(V_{\mathcal{L}}(\ell_{123},0));\;1\mapsto [{\bf 1}],\;x\mapsto [\omega],\;y\mapsto [I].
\end{eqnarray*}
\end{thm}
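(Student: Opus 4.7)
The plan is to prove that $\varphi$ is a well-defined surjective algebra homomorphism (using the commutativity of $[\omega]$ and $[I]$ together with an inductive reduction modulo $O(V)$), and then to establish injectivity by exhibiting a two-parameter family of modules on which the images of $x$ and $y$ act by independent scalars.

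First I would verify that $\varphi$ is a well-defined algebra homomorphism. Since $\omega$ is a conformal vector, $[\omega]$ lies in the center of $A(V_{\mathcal{L}}(\ell_{123},0))$; combined with the author's remark that $[\omega]*[I]=[I]*[\omega]$, this shows that the assignment $x\mapsto[\omega]$, $y\mapsto[I]$ extends (uniquely) to an algebra map $\varphi:\C[x,y]\to A(V_{\mathcal{L}}(\ell_{123},0))$.

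Next I would prove surjectivity by induction on the conformal weight of a PBW basis vector
\[
u=I_{-k_{1}}\cdots I_{-k_{s}}L_{-m_{1}}\cdots L_{-m_{r}}{\bf 1}
\qquad(m_{i}\geq 2,\; k_{j}\geq 1).
\]
The key ingredient is Lemma \ref{O}: applied to $a=\omega$ (weight $2$) with varying $m\geq n\geq 0$ and $b=v$, one produces, modulo $O(V)$, linear relations among $L_{-j}v$ for $j\geq -n-1$, which allow me to express $L_{-m}v$ for $m\geq 3$ in terms of $L_{-2}v$, $L_{-1}v$, $L_{0}v$ and vectors of strictly smaller weight. Since $v$ is homogeneous of weight $h$, $L_{0}v=hv$ is itself in the inductive hypothesis, and $L_{-1}v$ can likewise be pushed down by the standard translation identity $[L_{-1}v]\equiv -\mathrm{wt}(v)[v]\pmod{O(V)}$ derivable from Lemma \ref{O}. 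A parallel argument with $a=I$ (weight $1$) reduces $I_{-k}v$ for $k\geq 2$ to an expression involving $I_{-1}v$, $I_{0}v$, and lower-weight terms, and $I_{0}v=h_{2}v$ vanishes on ${\bf 1}$ (indeed, $I_{0}$ is central, so $I_{0}v=0$ for all $v\in V_{\mathcal{L}}(\ell_{123},0)$, as noted in the paper). Finally, the defining formula $[\omega]*[v]=[L_{-2}v]+2[L_{-1}v]+[L_{0}v]$ and the analogous formula $[I]*[v]=[I_{-1}v]+[I_{0}v]$ let me realize each remaining $[L_{-2}v]$ and $[I_{-1}v]$ as $[\omega]*[v]$ or $[I]*[v]$ modulo already-handled terms. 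Assembling these reductions, every $[u]$ becomes a polynomial in $[\omega]$ and $[I]$ under $*$, which proves $\varphi$ is onto. This is the main obstacle: the bookkeeping in the simultaneous induction on weight and on the number of tensor factors, together with the verification that the star-product correction terms remain within the inductive range.

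For injectivity I would use the Verma modules from Theorem \ref{irred}: for every $(h_{1},h_{2})\in\C^{2}$ the module $M_{\mathcal{L}}(\ell_{123},h_{1},h_{2})$ has a one-dimensional top level $M_{\mathcal{L}}(\ell_{123},h_{1},h_{2})_{(h_{1})}=\C{\bf 1}_{(\ell_{123},h_{12})}$ on which $o(\omega)=L_{0}$ acts by $h_{1}$ and $o(I)=I_{0}$ acts by $h_{2}$. If $P(x,y)\in\ker\varphi$, then by the standard correspondence between $A(V)$ and the top level of a $V$-module, $P(h_{1},h_{2})=P(o(\omega),o(I))$ annihilates ${\bf 1}_{(\ell_{123},h_{12})}$, hence $P(h_{1},h_{2})=0$ for all $(h_{1},h_{2})\in\C^{2}$. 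Therefore $P=0$ as a polynomial, and $\varphi$ is injective, completing the proof.
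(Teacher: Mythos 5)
Your proposal is correct and follows essentially the same route as the paper: surjectivity by induction on weight using the reductions from Lemma \ref{O} together with the $*$-product formulas $[\omega]*[v]$ and $[I]*[v]$, and injectivity by evaluating a purported kernel element via $o(\omega)=L_{0}$ and $o(I)=I_{0}$ on the top levels of the Verma modules $M_{\mathcal{L}}(\ell_{123},h_{1},h_{2})$. The only cosmetic difference is that you carry out the pure-Virasoro reduction ($L_{-m}v$ for $m\geq 3$ down to $L_{-2}v$, $L_{-1}v$, $L_{0}v$) explicitly, where the paper simply cites Lemma 4.1 of \cite{W}.
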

\begin{proof}
Let $A$ be the subalgebra of $A(V_{\mathcal{L}}(\ell_{123},0))$ generated by $[\omega]$ and $[I]$.
 For the existence of the above surjective algebra homomorphism, it suffices to show that for every homogeneous
$u\in V_{\mathcal{L}}(\ell_{123},0)$ with $wt\; u\geq 1$, $[u]\in A$. We show it by induction on $wt \;u$, note $wt\; u\geq 1$.
For $u\in V_{\mathcal{L}}(\ell_{123},0)$ with $wt\; u=1$, we only have one choice $u=I$, clearly $[u]=[I]\in A$.
Suppose for all homogeneous $u$ with $wt\; u\leq m-1$ we have $[u]\in A$, then for $u$
with $wt\; u=m$, we may assume that
$$u=I_{-k_{1}}\cdots I_{-k_{s}}L_{-m_{1}}\cdots L_{-m_{r}}{\bf 1} $$
for $r,s\geq 0$, $m_{1}\geq\cdots\geq m_{r}\geq 2$, $k_{1}\geq\cdots\geq k_{s}\geq 1$
with $\sum_{i=1}^{r}m_{i}+\sum_{j=1}^{s}k_{j}=m.$
If all $k_{j}$'s are zero, i.e.
$$u=L_{-m_{1}}\cdots L_{-m_{r}}{\bf 1}, $$
then one get from \cite{W} Lemma 4.1 that $[u]$ can be generated by $[\omega].$
Otherwise, denote
$$u^{'}=I_{-k_{2}}\cdots I_{-k_{s}}L_{-m_{1}}\cdots L_{-m_{r}}{\bf 1},$$
then $[u^{'}]$ is in $A(V_{\mathcal{L}}(\ell_{123},0))$ by induction, and $u=I_{-k_{1}} u^{'}$, $k_{1}\geq 1.$
By Lemma \ref{O},
take $a=I$, $n=0$, note $wt\; a=1$, then we have
\begin{eqnarray*}
[(I_{-n}+I_{-n-1})b]=[0]\;\;\mbox{for}\;b\;\mbox{homogeneous},\;n\geq 1.
\end{eqnarray*}
So without loss of generality,
we may assume $k_{1}=1$, i.e. $u=I_{-1}u^{'}$.
Then
$$[I]*[u^{'}]=[(I_{-1}+I_{0})u^{'}]=[I_{-1}u^{'}]+[I_{0}u^{'}]=[u]$$
with $[u^{'}], [I]\in A$ we get $[u]\in A$.

We now show that $\varphi$ is also injective.
Assume $0\neq f(x,y)\in Ker(\varphi)$, we can write it as $f(x,y)=\sum\limits_{f.s.}a_{mn}x^{m}y^{n}$ (f.s. means finite sum),
then $\sum\limits_{f.s.}a_{mn}[\omega]^{m}[I]^{n}=[0]$ in $A(V_{\mathcal{L}}(\ell_{123},0))$,
and so
$\sum\limits_{f.s.}a_{mn}[\omega]^{m}[I]^{n}$ acts trivially on any $A(V_{\mathcal{L}}(\ell_{123},0))$-modules
which corresponds to that $\sum_{f.s.}a_{mn}o(\omega)^{m}o(I)^{n}$ acts as zero on the bottom level of any $V_{\mathcal{L}}(\ell_{123},0)$-modules.
Note $o(\omega)=\omega_{1}=L_{0}$ and $o(I)=I_{0}$.
Recall for any $h_{1},h_{2}\in\C$,
 $M_{\mathcal{L}}(\ell_{123},h_{1},h_{2})$ is a module for $V_{\mathcal{L}}(\ell_{123},0)$,
with the bottom level $\C_{\ell_{123},h_{12}}$, and $L_{0}$ acts as $h_{1}$, $I_{0}$ acts as $h_{2}$,
 hence on $\C_{\ell_{123},h_{12}}$ we have
\begin{eqnarray*}
&&{} 0=\sum_{f.s.}a_{mn}o(\omega)^{m}o(I)^{n}  \\
&&{}=\sum_{f.s.}a_{mn}L_{0}^{m}I_{0}^{n} =\sum_{f.s.}a_{mn}h_{1}^{m}h_{2}^{n},
\end{eqnarray*}
but clearly, there exist elements $h_{1}, h_{2}$ such that $\sum\limits_{f.s.}a_{mn}h_{1}^{m}h_{2}^{n}\neq 0$,
contradiction, so $\varphi$ is injective.
\end{proof}

\begin{rmk}\label{assoc}
{\em Later on, we will characterize $V_{\mathcal{L}}(\ell_{123},0)$ as a tensor product of two vertex operator algebras,
which will immediately see that $A(V_{\mathcal{L}}(\ell_{123},0))$ is isomorphic to a polynomial algebra over $\C$
with two variables. But our proof above is more intrinsic and gives expectation (or some sense) that $V_{\mathcal{L}}(\ell_{123},0)$
maybe isomorphic to a tensor product of two vertex operator algebras.
}
\end{rmk}

Since for a vertex operator algebra to be regular, rational and $C_{2}$-cofinite,
its Zhu's algebra must be of finite dimensional (c.f. \cite{Z}, \cite{DLM2}),
now $A(V_{\mathcal{L}}(\ell_{123},0))\cong \C[x,y]$ is an infinite dimensional $\C$-algebra,
hence we have:
\begin{prop}
$V_{\mathcal{L}}(\ell_{123},0)$ is not regular, not rational and not $C_{2}$-cofinite.
\end{prop}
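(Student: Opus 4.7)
The plan is to leverage the preceding theorem, which identifies $A(V_{\mathcal{L}}(\ell_{123},0))$ with the polynomial algebra $\C[x,y]$ via the map sending $1\mapsto [{\bf 1}]$, $x\mapsto [\omega]$, $y\mapsto [I]$. Since $\C[x,y]$ is manifestly infinite-dimensional over $\C$, the isomorphism forces $\dim_{\C} A(V_{\mathcal{L}}(\ell_{123},0))=\infty$. The entire argument then reduces to invoking the standard structural theorems that translate finite-dimensionality of $A(V)$ into necessary conditions for each of the three properties in question.

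First I would dispose of rationality. By Zhu's foundational result (\cite{Z}), if a vertex operator algebra $V$ is rational, then $A(V)$ is a finite-dimensional semisimple associative algebra. Contrapositively, the infinite-dimensionality of $A(V_{\mathcal{L}}(\ell_{123},0))$ forces $V_{\mathcal{L}}(\ell_{123},0)$ to fail rationality. Next I would handle $C_2$-cofiniteness: there is a standard spanning-set argument (see \cite{DLM2}) showing $\dim A(V) \leq \dim V/C_2(V)$, so $C_2$-cofiniteness implies Zhu's algebra is finite-dimensional, and again we derive a contradiction. Finally, regularity is known to imply rationality directly from the definitions given earlier in the excerpt (every weak module decomposes into simple ordinary modules, hence so does every admissible module), so the failure of rationality immediately yields the failure of regularity.

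There is essentially no obstacle here: once the Zhu's algebra is computed to be $\C[x,y]$, all three statements follow by citing standard theorems. The only thing requiring minor care is ensuring that the implications are being applied in the correct direction (finite-dimensional Zhu's algebra is \emph{necessary}, not sufficient, for each property), but this is purely a matter of bookkeeping rather than a genuine difficulty. The proof therefore collapses to a single short paragraph combining the isomorphism $A(V_{\mathcal{L}}(\ell_{123},0))\cong\C[x,y]$ with the cited general results from \cite{Z} and \cite{DLM2}.
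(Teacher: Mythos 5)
Your proposal is correct and follows exactly the paper's argument: the paper likewise deduces all three negative statements from the infinite-dimensionality of $A(V_{\mathcal{L}}(\ell_{123},0))\cong\C[x,y]$ together with the standard fact (citing \cite{Z}, \cite{DLM2}) that regularity, rationality and $C_{2}$-cofiniteness each force Zhu's algebra to be finite-dimensional. Your version merely spells out the individual implications a bit more explicitly than the paper does.
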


\subsection{Commutant}
We now look at the commutant of Heisenberg vertex operator algebra in $V_{\mathcal{L}}(\ell_{123},0)$.
First recall (\cite{FZ}) when $(V,Y,{\bf 1},\omega)$ is a vertex operator algebra and $(U,Y,{\bf 1},\omega^{'})$ is a vertex operator subalgebra of $V$,
 the {\em commutant} is defined to be
$$U^{c}=\{ v\in V \ | \ L^{'}(-1)v=0  \},$$
where $L^{'}(-1)$ is determined by $\omega^{'}.$

For our $V_{\mathcal{L}}(\ell_{123},0)$,
we know that when $\ell_{3}\neq 0$, $U=V_{\mathcal{H}}(\ell_{3},0)$ is a vertex operator algebra
with standard conformal vector $\omega^{'}=\frac{1}{2\ell_{3}}I_{-1}I_{-1}{\bf 1}$ (of central charge 1).
\begin{eqnarray*}
L_{1}\omega^{'}=\frac{1}{2\ell_{3}}L_{1}I_{-1}I_{-1}{\bf 1}  =\frac{-2\ell_{2}}{\ell_{3}}I_{-1}{\bf 1},
\end{eqnarray*}
so $L_{1}\omega^{'}=0$ if and only if $\ell_{2}=0$.
Hence when $\ell_{2}=0$, by Theorem 5.1 of \cite{FZ}, the commutant $U^{c}$ of the
Heisenberg vertex operator algebra $U=V_{\mathcal{H}}(\ell_{3},0)$ (equipped with the standard conformal vector $\omega^{'}$)
is a vertex operator algebra with conformal element $\omega^{''}=\omega-\omega^{'}.$

The next thing we want to do is to characterize the generators (or even basis)
of the commutant (actually, we get a more powerful result, see below).
We consider $\ell_{3}\neq 0$ in the following for necessity.

For the basic notions and results of tensor product vertex operator algebra, see for example \cite{LL}
The idea of the following theorem comes from the paper \cite{B2}.
\begin{thm}\label{tensor prod}
When $\ell_{3}\neq 0,$
$V_{\mathcal{L}}(\ell_{123},0)$ is isomorphic to the tensor product $V_{\mathcal{H}}(\ell_{3},0)\otimes V_{\tilde{Vir}}(c_{\tilde{Vir}},0)$
as vertex operator algebras,
where $V_{\mathcal{H}}(\ell_{3},0)$ is equipped with a nonstand conformal vector $\omega_{H}=\omega^{'}+\frac{\ell_{2}}{\ell_{3}}I_{-2}{\bf 1}$(of central charge $1-12\frac{\ell_{2}^{2}}{\ell_{3}}$),
and $\tilde{Vir}$ is a new Virasoro algebra constructed in the proof with central charge $c_{\tilde{Vir}}=\ell_{1}-1+12\frac{\ell_{2}^{2}}{\ell_{3}}.$
\end{thm}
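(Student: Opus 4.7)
\emph{Plan.} The strategy is to split the conformal vector $\omega = L_{-2}\mathbf{1}$ of $V_{\mathcal{L}}(\ell_{123},0)$ as $\omega = \omega_H + \widetilde\omega$, where $\omega_H$ and $\widetilde\omega := \omega - \omega_H$ are mutually commuting conformal vectors: $\omega_H$ endows the Heisenberg sub-vertex algebra $\langle I\rangle\cong V_\mathcal{H}(\ell_3,0)$ with a (nonstandard) conformal structure, while $\widetilde\omega$ generates a Virasoro VOA sitting in its commutant. This decomposition will then be upgraded to a VOA isomorphism via universal properties and a character comparison.

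First I would verify that $\omega_H = \omega' + (\ell_2/\ell_3)I_{-2}\mathbf{1}$ makes $\langle I\rangle$ into a VOA of central charge $1 - 12\ell_2^2/\ell_3$. Setting $Q = \ell_2/\ell_3$ and $Y(\omega_H, x) = \sum L^H_n x^{-n-2}$ one gets $L^H_n = L'_n - Q(n+1)I_n$, where $L'_n$ are the standard Sugawara modes from $\omega'$; a direct calculation (the standard Heisenberg ``background-charge'' computation, or equivalently Borcherds' commutator formula applied to $Y(\omega_H,x_1)Y(\omega_H,x_2)$) produces the Virasoro relations with the claimed central charge. Furthermore $L^H_{-1}=L'_{-1}$ is the natural derivation on $\langle I\rangle$, and there $L^H_0 = L'_0$ (since $I_0$ annihilates the Heisenberg vacuum module), so the grading is preserved.

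The key algebraic identity is $[\widetilde L_m, I_n]=0$ for all $m,n\in\Z$, where $\widetilde L_n$ are the modes of $Y(\widetilde\omega, x)$. Expanding $[\widetilde L_m, I_n] = [L_m, I_n] - [L^H_m, I_n]$ using (\ref{eq:2.5}) and $L^H_m = L'_m - Q(m+1)I_m$, the central anomaly $-(m^2+m)\ell_2\delta_{m+n,0}$ on the right-hand side of (\ref{eq:2.5}) is cancelled precisely by the contribution $-Q(m+1)m\ell_3\delta_{m+n,0}$ coming from the $(\ell_2/\ell_3)I_{-2}\mathbf{1}$ piece, thanks to $Q\ell_3=\ell_2$. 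Hence $\widetilde\omega$ commutes with the whole Heisenberg sub-vertex algebra; in particular $[\widetilde L_m, L^H_n]=0$, so writing $L = \widetilde L + L^H$ yields
\begin{eqnarray*}
[\widetilde L_m, \widetilde L_n] \;=\; [L_m, L_n] - [L^H_m, L^H_n] \;=\; (m-n)\widetilde L_{m+n} + \tfrac{m^3-m}{12}\delta_{m+n,0}\,c_{\tilde{Vir}},
\end{eqnarray*}
producing a Virasoro algebra $\widetilde{Vir}$ of central charge $c_{\tilde{Vir}} = \ell_1 - 1 + 12\ell_2^2/\ell_3$.

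Finally, the universal properties of $V_\mathcal{H}(\ell_3,0)$ (with conformal vector $\omega_H$) and of the universal Virasoro vacuum module, together with the mutual commutativity established above, produce a VOA homomorphism
\begin{eqnarray*}
\Phi\colon V_\mathcal{H}(\ell_3,0)\otimes V_{\tilde{Vir}}(c_{\tilde{Vir}},0) \longrightarrow V_\mathcal{L}(\ell_{123},0)
\end{eqnarray*}
carrying $I_{-1}\mathbf{1}\otimes\mathbf{1}\mapsto I$, $\mathbf{1}\otimes\widetilde\omega\mapsto\widetilde\omega$, and conformal vectors to $\omega_H\otimes\mathbf{1}+\mathbf{1}\otimes\widetilde\omega = \omega$. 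Surjectivity is clear since the image contains both generators $\omega$ and $I$ of $V_\mathcal{L}(\ell_{123},0)$. For injectivity I would compare graded characters: the PBW basis described just after Theorem \ref{VA1} gives the character of $V_\mathcal{L}(\ell_{123},0)$ as $\prod_{m\geq 2}(1-q^m)^{-1}\prod_{n\geq 1}(1-q^n)^{-1}$, which is precisely the product of the Heisenberg and universal Virasoro vacuum characters. The main obstacle is locating the correct decomposition in the first place; the coefficient $\ell_2/\ell_3$ in $\omega_H$ is forced by the cancellation that yields $[\widetilde L_m, I_n]=0$, and once this crucial identity is established the remaining assertions follow routinely.
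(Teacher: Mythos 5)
Your proposal is correct and follows essentially the same route as the paper: the same decomposition $\omega=\omega_{H}+\tilde{\omega}$ with $\omega_{H}=\omega^{'}+\frac{\ell_{2}}{\ell_{3}}I_{-2}{\bf 1}$, the same verification that $\omega_{H}$ and $\tilde{\omega}$ are commuting Virasoro vectors with the stated central charges, and the same key commutation of $\tilde{\omega}$ with the Heisenberg modes. The only divergence is at the final step, where the paper defines the map explicitly on the PBW basis ($I_{-k_{1}}\cdots I_{-k_{s}}{\bf 1}\otimes \tilde{\omega}_{-m_{1}}\cdots \tilde{\omega}_{-m_{r}}{\bf 1}\mapsto I_{-k_{1}}\cdots I_{-k_{s}}\tilde{\omega}_{-m_{1}}\cdots \tilde{\omega}_{-m_{r}}{\bf 1}$) and checks the homomorphism property via the iterate formula, whereas you obtain the homomorphism from universal properties and deduce bijectivity from surjectivity plus the matching graded characters; both are valid and of comparable length.
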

\begin{proof}
Firstly, denote $\tilde{\omega}=\omega-\omega_{H}$,
it is straightforward to show that $\omega_{H}$ and $\tilde{\omega}$ satisfy Virasoro algebra relations.
So by the standard Virasoro algebra vertex operator algebra theory,
$\tilde{\omega}$ gives a Virasoro algebra which we denoted by $\tilde{Vir}$ and from $\tilde{Vir}$
we get a vertex operator algebra which we denoted by $V_{\tilde{Vir}}(c_{\tilde{Vir}},0)$
with conformal vector $\tilde{\omega}$ and is of central charge $c_{\tilde{Vir}}=\ell_{1}-1+12\frac{\ell_{2}^{2}}{\ell_{3}}.$
Since $(I_{-2}{\bf 1})_{0}=0$ and $(I_{-2}{\bf 1})_{1}=\omega^{'}_{1}-I_{0}$, $I_{0}$ acts on $V_{\mathcal{H}}(\ell_{3},0)$ as zero
 and $\omega^{'}$ is a conformal vector,
  we see that $\omega_{H}$ is also a conformal vector of $V_{\mathcal{H}}(\ell_{3},0)$ and is of central charge
$1-12\frac{\ell_{2}^{2}}{\ell_{3}}$.
Next, it is straightforward to show that
$$\tilde{\omega}_{n}I_{m}{\bf 1}=I_{m}\tilde{\omega}_{n}{\bf 1}$$
 for any $m,n\in\Z$.
At last, we define a map
$$\varphi:V_{\mathcal{H}}(\ell_{3},0)\otimes V_{\tilde{Vir}}(c_{\tilde{Vir}},0)\longrightarrow V_{\mathcal{L}}(\ell_{123},0)$$
on the basis by
$$I_{-k_{1}}\cdots I_{-k_{s}}{\bf 1}\otimes \tilde{\omega}_{-m_{1}}\cdots \tilde{\omega}_{-m_{r}}{\bf 1}
\mapsto I_{-k_{1}}\cdots I_{-k_{s}}\tilde{\omega}_{-m_{1}}\cdots \tilde{\omega}_{-m_{r}}{\bf 1}$$
and extend $\C$-linearly, it is easy to check that $\varphi$ is a linear isomorphism,
$\varphi({\bf 1}\otimes{\bf 1})={\bf 1}$ and $\varphi(\omega_{H}\otimes{\bf 1}+{\bf 1}\otimes\tilde{\omega})=\tilde{\omega}+\omega_{H}=\omega$,
it remains to show that it is a vertex algebra homomorphism, for this, note that
$$(I_{-k_{1}}\cdots I_{-k_{s}}\tilde{\omega}_{-m_{1}}\cdots \tilde{\omega}_{-m_{r}}{\bf 1})_{n}=\sum_{i\in\Z}(I_{-k_{1}}\cdots I_{-k_{s}}{\bf 1})_{i}(\tilde{\omega}_{-m_{1}}\cdots \tilde{\omega}_{-m_{r}}{\bf 1})_{n-i-1}$$
on $V_{\mathcal{L}}(\ell_{123},0)$ for all $n\in\Z$.
\end{proof}

\begin{rmk}
{\em
The equation $\tilde{\omega}_{n}I_{m}{\bf 1}=I_{m}\tilde{\omega}_{n}{\bf 1}$
is important in proving that $\varphi$ is a vertex algebra homomorphism.
Note when $\ell_{3}=0$, $V_{\mathcal{H}}(\ell_{3},0)$ still has a vertex algebra structure,
 take $\tilde{\omega}=\omega$,
so you may think just considering them as vertex algebras,
can we characterize $V_{\mathcal{L}}(\ell_{123},0)\cong V_{\mathcal{H}}(0,0)\otimes V_{Vir}(\ell_{1},0)$
as vertex algebras?
The answer is not positive since now we do not have $\omega_{n}I_{m}{\bf 1}=I_{m}\omega_{n}{\bf 1}$ in general
(even if you require $\ell_{2}=0$), the defined $\varphi$ is a linear isomorphism for sure, but it is not a vertex algebra homomorphism
anymore.
 }
\end{rmk}

By the above theorem, we get that
\begin{prop}
For any $\ell_{2}\in\C$, $\ell_{1}\in\C$, $0\neq\ell_{3}\in\C$, the commutant of $V_{\mathcal{H}}(\ell_{3},0)$ (with conformal vector $\omega_{H}$) in $V_{\mathcal{L}}(\ell_{123},0)$
 is a vertex operator algebra and
 is isomorphic to the Virasoro vertex operator algebra $V_{\tilde{Vir}}(c_{\tilde{Vir}},0)$
(whose structure is very clear).
\end{prop}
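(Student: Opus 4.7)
The plan is to deduce the proposition directly from the tensor product decomposition in Theorem \ref{tensor prod}. Before invoking it, I would first confirm that the commutant carries a vertex operator algebra structure at all, by checking the Frenkel--Zhu criterion $L_1\omega_H=0$. Using the brackets $[L_1,I_{-1}]=I_0-2c_2$ and $[L_1,I_{-2}]=2I_{-1}$, a direct computation gives $L_1\omega'=-\frac{2\ell_2}{\ell_3}I_{-1}\mathbf{1}$ and $L_1\bigl(\frac{\ell_2}{\ell_3}I_{-2}\mathbf{1}\bigr)=\frac{2\ell_2}{\ell_3}I_{-1}\mathbf{1}$, which cancel. Hence Theorem 5.1 of \cite{FZ} shows the commutant is a vertex operator algebra with conformal vector $\omega-\omega_H=\tilde{\omega}$.

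The main step is then to transport the commutant through the isomorphism $\varphi$ of Theorem \ref{tensor prod}. Under $\varphi$, the subalgebra $V_{\mathcal{H}}(\ell_3,0)\subseteq V_{\mathcal{L}}(\ell_{123},0)$ corresponds to $V_{\mathcal{H}}(\ell_3,0)\otimes\mathbf{1}$ with conformal vector $\omega_H\otimes\mathbf{1}$, so the defining operator $L_H(-1)=(\omega_H)_0$ of the commutant acts on the tensor product as $(\omega_H)_0\otimes\mathrm{id}$. By the vertex algebra axiom $\omega^*_0 v=v_{-2}\mathbf{1}$, valid for any choice of conformal vector $\omega^*$, the operator $(\omega_H)_0$ equals the canonical translation operator $\mathcal{D}_H$ of $V_{\mathcal{H}}(\ell_3,0)$, independent of whether one uses the standard $\omega'$ or the shifted $\omega_H$. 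Consequently the commutant pulls back to
\begin{equation*}
\ker\bigl(\mathcal{D}_H\otimes\mathrm{id}\bigr)\;=\;\ker(\mathcal{D}_H)\otimes V_{\tilde{Vir}}(c_{\tilde{Vir}},0).
\end{equation*}

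It then remains to show $\ker(\mathcal{D}_H)=\C\mathbf{1}$. From $[\mathcal{D}_H,I_{-n}]=nI_{-n-1}$ and $\mathcal{D}_H\mathbf{1}=0$, one sees that under the polynomial realization $V_{\mathcal{H}}(\ell_3,0)\cong\C[I_{-1},I_{-2},\ldots]$ the operator $\mathcal{D}_H$ acts as the derivation $\sum_{n\geq 1}nI_{-n-1}\,\partial/\partial I_{-n}$, which strictly raises the smallest index appearing in any monomial; no non-constant polynomial can lie in its kernel. Therefore the commutant equals $\mathbf{1}\otimes V_{\tilde{Vir}}(c_{\tilde{Vir}},0)$, a subalgebra of the tensor product isomorphic to $V_{\tilde{Vir}}(c_{\tilde{Vir}},0)$, and the conformal vector $\tilde{\omega}$ found in the first step corresponds under $\varphi$ to $\mathbf{1}\otimes\tilde{\omega}$, the standard Virasoro conformal vector on the right factor. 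The only potentially subtle point is the kernel computation for $\mathcal{D}_H$, but once the polynomial picture is in place it is a one-line argument; the heart of the proof is really Theorem \ref{tensor prod} and the observation that $\omega_H$ and $\omega'$ induce the same translation operator.
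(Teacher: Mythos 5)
Your proof is correct and takes the same route as the paper: the paper simply asserts this proposition as an immediate consequence of Theorem \ref{tensor prod}, and you have supplied the details it leaves implicit (the check $L_{1}\omega_{H}=0$ for arbitrary $\ell_{2}$, the identification of $(\omega_{H})_{0}$ with the canonical translation operator $\mathcal{D}_{H}$, and the computation $\ker\mathcal{D}_{H}=\C\mathbf{1}$). One small repair to the last step: $\mathcal{D}_{H}=\sum_{n\geq 1}nI_{-n-1}\,\partial/\partial I_{-n}$ does \emph{not} strictly raise the smallest index of every monomial (e.g.\ $\mathcal{D}_{H}(I_{-1}I_{-3})$ still contains the term $3I_{-1}I_{-4}$); argue instead with the largest index $N$ occurring in a nonconstant $p$, for which $\partial(\mathcal{D}_{H}p)/\partial I_{-N-1}=N\,\partial p/\partial I_{-N}\neq 0$, so $\mathcal{D}_{H}p\neq 0$.
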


\begin{rmk}
{\em
Note in particular when $\ell_{2}=0$, $V_{\mathcal{H}}(\ell_{3},0)$ is with usual conformal vector $\omega_{H}=\omega^{'}$,
 so we've answered the commutant question that we originally considered.
}
\end{rmk}

\begin{rmk}
{\em As explained before in Remark \ref{assoc}, with Theorem \ref{tensor prod}, by the fact that Zhu's algebra $A(V\otimes W)=A(V)\otimes A(W),$
and the result of Heisenberg and Virasoro vertex operator algebras, we can immediately have that $A(V_{\mathcal{L}}(\ell_{123},0))$
 is isomorphic to a polynomial algebra in two variables.
 }
 \end{rmk}



\subsection{Simple vertex operator algebra and its structure}

We now look at the structure of the simple descendant of $V_{\mathcal{L}}(\ell_{123},0)$,
as usual case, the simple vertex operator algebra comes from $V_{\mathcal{L}}(\ell_{123},0)$
is of the form $L_{\mathcal{L}}(\ell_{123},0)=V_{\mathcal{L}}(\ell_{123},0)/T$,
where $T$ is the maximal ideal of $V_{\mathcal{L}}(\ell_{123},0)$
(which is equivalent to say that $T$ is the maximal $\mathcal{L}$-submodule of the $\mathcal{L}$-module $V_{\mathcal{L}}(\ell_{123},0)$),
we do not study the structure of the maximal submodule $T$ directly, instead we use Theorem \ref{tensor prod}
to get a characterization of it and hence of $V_{\mathcal{L}}(\ell_{123},0)/T$.



Since a vertex operator algebra is simple if it is simple as a vertex algebra,
simplicity of a vertex operator algebra does not depend on the conformal vector.
We know that $V_{\mathcal{H}}(\ell_{3},0)$ ($\ell_{3}\neq 0$) with the usual conformal vector $\omega^{'}$ is simple (of central charge 1),
so $V_{\mathcal{H}}(\ell_{3},0)$ with a nonstand conformal vector $\omega_{H}$
is also a simple vertex operator algebra (of central charge $1-12\frac{\ell_{2}^{2}}{\ell_{3}}$).

Denote by $c_{p,q}=1-6\frac{(p-q)^{2}}{pq}$, where $p,q\in\{2,3,4,\ldots\}$ and are relatively prime, recall (c.f. \cite{W}, \cite{LL})
\begin{prop}
\begin{enumerate}
\item If $c_{\tilde{Vir}}\neq c_{p,q}$, $V_{\tilde{Vir}}(c_{\tilde{Vir}},0)$ is a simple vertex operator algebra;
\item If $c_{\tilde{Vir}}= c_{p,q}$, it has the maximal ideal $\langle v_{p,q}\rangle$ generated by
  a singular vector $v_{p,q}$ of degree $(p-1)(q-1)$.
\end{enumerate}
\end{prop}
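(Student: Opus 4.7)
The plan is to reduce the statement to the classical Feigin--Fuchs / Kac determinant theory for vacuum Verma modules of the Virasoro algebra, exploiting that $V_{\tilde{Vir}}(c_{\tilde{Vir}},0)$ is, by construction, the universal vertex operator algebra associated to $\tilde{Vir}$ at central charge $c_{\tilde{Vir}}$, i.e.\ the quotient $M(c_{\tilde{Vir}},0)/U(\tilde{Vir})L_{-1}\mathbf{1}$ of the vacuum Verma module.

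First I would identify vertex algebra ideals with $\tilde{Vir}$-submodules. Since $V_{\tilde{Vir}}(c_{\tilde{Vir}},0)$ is strongly generated by the conformal vector $\tilde{\omega}$, every vertex algebra ideal is in particular a $\tilde{Vir}$-submodule, and conversely any graded $\tilde{Vir}$-submodule is stable under $Y(\tilde{\omega},x)$ and hence is an ideal. The module is $\Z_{\geq 0}$-graded with one-dimensional top $\C\mathbf{1}$ and finite-dimensional homogeneous components, so any proper graded submodule contains a singular vector, i.e., a nonzero homogeneous $v$ of positive degree with $L_n v = 0$ for all $n \geq 1$; in fact the maximal proper submodule is generated by the full collection of such singular vectors.

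Second, I would invoke the Kac determinant formula, which expresses the determinant of the Shapovalov form on the degree-$n$ component of $M(c,0)$ as an explicit product over partitions; a singular vector at level $n$ exists precisely when this determinant vanishes. The Feigin--Fuchs analysis of the vacuum Verma module then says that, apart from the tautological descendants of $L_{-1}\mathbf{1}$, a nontrivial singular vector exists in $V_{\tilde{Vir}}(c_{\tilde{Vir}},0)$ if and only if $c_{\tilde{Vir}} = c_{p,q}$ for coprime $p,q \in \{2,3,4,\ldots\}$, and the lowest such vector $v_{p,q}$ lives in degree $(p-1)(q-1)$.

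Combining these, in case (1) with $c_{\tilde{Vir}} \neq c_{p,q}$ the Shapovalov form is non-degenerate on every graded piece, so no further singular vectors exist and the VOA has no proper ideal, i.e.\ is simple. In case (2) the singular vector $v_{p,q}$ generates a proper ideal $\langle v_{p,q}\rangle$; to see it is the unique maximal ideal I would appeal again to the Feigin--Fuchs embedding pattern among Virasoro Verma modules at minimal-model central charge, which shows that every other singular vector of $V_{\tilde{Vir}}(c_{p,q},0)$ lies in the submodule already generated by $v_{p,q}$. The principal obstacle is the Kac determinant / Feigin--Fuchs input itself, which is the deep ingredient; the vertex-algebraic repackaging as ideals of $V_{\tilde{Vir}}(c_{\tilde{Vir}},0)$ is then routine, and both parts of the proposition follow.
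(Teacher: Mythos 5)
Your proposal is correct, but note that the paper does not prove this proposition at all: it is stated as a recalled classical fact, prefaced by ``recall (c.f.\ \cite{W}, \cite{LL})'', with the Feigin--Fuchs/Kac-determinant analysis of the vacuum Verma module delegated entirely to those references. Your argument --- identifying ideals of the $\tilde{\omega}$-generated VOA with graded $\tilde{Vir}$-submodules, locating singular vectors in the lowest graded piece of any proper submodule, and then invoking the Kac determinant and the embedding pattern of Verma modules at $c_{p,q}$ --- is exactly the standard proof carried out in \cite{W}, so it matches the intended justification; the only cosmetic simplification available is that uniqueness of the maximal ideal already follows from the fact that proper ideals meet $V_{(0)}=\C{\bf 1}$ trivially, so their sum is again proper, leaving Feigin--Fuchs needed only to identify that maximal ideal as $\langle v_{p,q}\rangle$.
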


By Corollary 4.7.3  of \cite{FHL}, tensor product of vertex operator algebras is simple if and only if each term is simple.
So summary above we have
\begin{prop}
\begin{enumerate}
  \item When $\ell_{3}\neq 0$ and $c_{\tilde{Vir}}\neq c_{p,q}$, $V_{\mathcal{H}}(\ell_{3},0)\otimes V_{\tilde{Vir}}(c_{\tilde{Vir}},0)$
 is a simple vertex operator algebra, and hence $V_{\mathcal{L}}(\ell_{123},0)$ is also a simple vertex operator algebra.
  \item When $\ell_{3}\neq 0$ and $c_{\tilde{Vir}}= c_{p,q}$, $V_{\tilde{Vir}}(c_{\tilde{Vir}},0)$ has the maximal ideal generated by the singular vector $v_{p,q}$,
 and the quotient $V_{\tilde{Vir}}(c_{\tilde{Vir}},0)/\langle v_{p,q}\rangle$ is a simple vertex operator algebra with conformal vector
 $\overline{\tilde{\omega}}=\tilde{\omega}+\langle v_{p,q}\rangle$,
 hence then $V_{\mathcal{H}}(\ell_{3},0)\otimes (V_{\tilde{Vir}}(c_{\tilde{Vir}},0)/\langle v_{p,q}\rangle)$
 is a simple vertex operator algebra.
\end{enumerate}
\end{prop}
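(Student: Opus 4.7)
The plan is to assemble this proposition directly from three prior ingredients: Theorem \ref{tensor prod} identifying $V_{\mathcal{L}}(\ell_{123},0)$ with the tensor product $V_{\mathcal{H}}(\ell_{3},0)\otimes V_{\tilde{Vir}}(c_{\tilde{Vir}},0)$ (with the nonstandard conformal vector $\omega_{H}$ on the Heisenberg factor), the just-quoted Corollary 4.7.3 of \cite{FHL} saying a tensor product of vertex operator algebras is simple if and only if each tensor factor is simple, and the previous proposition classifying simplicity of $V_{\tilde{Vir}}(c_{\tilde{Vir}},0)$ in terms of whether $c_{\tilde{Vir}}=c_{p,q}$. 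The key conceptual observation, already highlighted in the paragraph preceding the proposition, is that simplicity of a vertex operator algebra depends only on its underlying vertex algebra, not on the choice of conformal vector; this is what allows us to transfer simplicity of the standard Heisenberg vertex operator algebra to the nonstandard one.

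For part (1), I would first note that $V_{\mathcal{H}}(\ell_{3},0)$ with the usual conformal vector $\omega'=\frac{1}{2\ell_{3}}I_{-1}I_{-1}{\bf 1}$ is a well-known simple vertex operator algebra when $\ell_{3}\neq 0$, and by the conformal-vector-independence of simplicity, it remains simple when equipped with $\omega_{H}=\omega'+\frac{\ell_{2}}{\ell_{3}}I_{-2}{\bf 1}$. Combined with the hypothesis $c_{\tilde{Vir}}\neq c_{p,q}$ which forces $V_{\tilde{Vir}}(c_{\tilde{Vir}},0)$ to be simple by the preceding proposition, Corollary 4.7.3 of \cite{FHL} gives that the tensor product is simple. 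Theorem \ref{tensor prod} then transports simplicity to $V_{\mathcal{L}}(\ell_{123},0)$.

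For part (2), when $c_{\tilde{Vir}}=c_{p,q}$, the quotient $V_{\tilde{Vir}}(c_{\tilde{Vir}},0)/\langle v_{p,q}\rangle$ is by construction a simple Virasoro vertex operator algebra (the minimal model), with $\tilde{\omega}$ descending to a conformal vector $\overline{\tilde{\omega}}$ since $v_{p,q}$ is a singular vector of positive degree and hence lies in the kernel of the projection restricted to the span of $\tilde{\omega}$. The Heisenberg factor is simple as in part (1), so Corollary 4.7.3 of \cite{FHL} again yields simplicity of $V_{\mathcal{H}}(\ell_{3},0)\otimes(V_{\tilde{Vir}}(c_{\tilde{Vir}},0)/\langle v_{p,q}\rangle)$.

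There is really no substantial obstacle once Theorem \ref{tensor prod} is in hand; the one point worth stating carefully is the conformal-vector-independence of simplicity (which is transparent: an ideal of a vertex algebra is closed under all operators $u_{n}$, hence the notion of proper ideal does not see any grading or Virasoro structure). If anything, the mildest technical check is that replacing $\omega'$ by $\omega_{H}$ on $V_{\mathcal{H}}(\ell_{3},0)$ still satisfies the vertex-operator-algebra axioms (bounded-below $L_{0}$-grading with finite-dimensional homogeneous subspaces, correct central charge $1-12\ell_{2}^{2}/\ell_{3}$), but this was already verified in Theorem \ref{tensor prod}, so nothing new is needed here.
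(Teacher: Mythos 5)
Your proposal is correct and follows essentially the same route as the paper, which likewise assembles this proposition directly from Theorem \ref{tensor prod}, the observation that simplicity is independent of the choice of conformal vector (so the Heisenberg factor with $\omega_{H}$ stays simple), the preceding classification of simplicity for $V_{\tilde{Vir}}(c_{\tilde{Vir}},0)$, and Corollary 4.7.3 of \cite{FHL}. No substantive difference.
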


And we can prove
 \begin{prop}
 For $c_{\tilde{Vir}}= c_{p,q}$ and $\ell_{3}\neq 0$,
\begin{enumerate}
 \item $V_{\mathcal{H}}(\ell_{3},0)\otimes\langle v_{p,q}\rangle$ is a proper ideal of the vertex operator algebra
  $V_{\mathcal{H}}(\ell_{3},0)\otimes V_{\tilde{Vir}}(c_{\tilde{Vir}},0)$.
\item We have
$$(V_{\mathcal{H}}(\ell_{3},0)\otimes V_{\tilde{Vir}}(c_{\tilde{Vir}},0))/(V_{\mathcal{H}}(\ell_{3},0)\otimes\langle v_{p,q}\rangle)\cong
V_{\mathcal{H}}(\ell_{3},0)\otimes (V_{\tilde{Vir}}(c_{\tilde{Vir}},0)/\langle v_{p,q}\rangle)$$
as vertex operator algebras.
 Hence by the simplicity of latter we see that $V_{\mathcal{H}}(\ell_{3},0)\otimes\langle v_{p,q}\rangle$ is
 the maximal ideal. So the simple vertex operator algebra $L_{\mathcal{L}}(\ell_{123},0)$ is isomorphic to the tensor product $V_{\mathcal{H}}(\ell_{3},0)\otimes (V_{\tilde{Vir}}(c_{\tilde{Vir}},0)/\langle v_{p,q}\rangle)$.
\end{enumerate}
 \end{prop}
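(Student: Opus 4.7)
The plan is to leverage the tensor product identification from Theorem \ref{tensor prod} to reduce everything to a standard computation with ideals in tensor products of vertex algebras. First I would recall the explicit form of the vertex operator on a tensor product: for $u,u'\in V_{\mathcal{H}}(\ell_3,0)$ and $v,v'\in V_{\tilde{Vir}}(c_{\tilde{Vir}},0)$, one has
\[
(u\otimes v)_n(u'\otimes v')=\sum_{i\in\Z}(u_i u')\otimes (v_{n-i-1}v'),
\]
obtained directly by comparing coefficients in $Y(u,x)\otimes Y(v,x)$. For part (1), given $v\in\langle v_{p,q}\rangle$ and arbitrary $u,u'\in V_{\mathcal{H}}(\ell_3,0)$, $v'\in V_{\tilde{Vir}}(c_{\tilde{Vir}},0)$, each term of the above sum lies in $V_{\mathcal{H}}(\ell_3,0)\otimes\langle v_{p,q}\rangle$ because $\langle v_{p,q}\rangle$ is an ideal of $V_{\tilde{Vir}}(c_{\tilde{Vir}},0)$; an identical computation with the roles reversed, or using skew-symmetry, handles $(u\otimes v)_n(u'\otimes v')$. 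Properness is immediate since $\langle v_{p,q}\rangle$ is a proper subspace of $V_{\tilde{Vir}}(c_{\tilde{Vir}},0)$.

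For part (2), I would exhibit the isomorphism as the map induced at the level of vector spaces by exactness of the tensor product over $\C$: the short exact sequence $0\to\langle v_{p,q}\rangle\to V_{\tilde{Vir}}(c_{\tilde{Vir}},0)\to V_{\tilde{Vir}}(c_{\tilde{Vir}},0)/\langle v_{p,q}\rangle\to 0$, tensored with $V_{\mathcal{H}}(\ell_3,0)$, yields a natural linear isomorphism
\[
\bigl(V_{\mathcal{H}}(\ell_3,0)\otimes V_{\tilde{Vir}}(c_{\tilde{Vir}},0)\bigr)/\bigl(V_{\mathcal{H}}(\ell_3,0)\otimes\langle v_{p,q}\rangle\bigr)\;\longrightarrow\;V_{\mathcal{H}}(\ell_3,0)\otimes\bigl(V_{\tilde{Vir}}(c_{\tilde{Vir}},0)/\langle v_{p,q}\rangle\bigr)
\]
sending $u\otimes v+\bigl(V_{\mathcal{H}}(\ell_3,0)\otimes\langle v_{p,q}\rangle\bigr)\mapsto u\otimes(v+\langle v_{p,q}\rangle)$. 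That this is a vertex operator algebra morphism (it sends vacuum to vacuum, conformal vector to conformal vector, and intertwines the vertex operators) is a routine check using the mode formula above, since the projection $V_{\tilde{Vir}}(c_{\tilde{Vir}},0)\to V_{\tilde{Vir}}(c_{\tilde{Vir}},0)/\langle v_{p,q}\rangle$ is a vertex algebra homomorphism.

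To conclude, I would invoke the preceding proposition, which asserts that $V_{\mathcal{H}}(\ell_3,0)\otimes(V_{\tilde{Vir}}(c_{\tilde{Vir}},0)/\langle v_{p,q}\rangle)$ is simple. The isomorphism above then forces $V_{\mathcal{H}}(\ell_3,0)\otimes\langle v_{p,q}\rangle$ to be a maximal ideal of $V_{\mathcal{H}}(\ell_3,0)\otimes V_{\tilde{Vir}}(c_{\tilde{Vir}},0)$, for any ideal strictly containing it would descend to a nonzero proper ideal in the simple quotient. Transporting this along the vertex operator algebra isomorphism $V_{\mathcal{L}}(\ell_{123},0)\cong V_{\mathcal{H}}(\ell_3,0)\otimes V_{\tilde{Vir}}(c_{\tilde{Vir}},0)$ supplied by Theorem \ref{tensor prod}, the preimage of $V_{\mathcal{H}}(\ell_3,0)\otimes\langle v_{p,q}\rangle$ is the unique maximal ideal $T$ of $V_{\mathcal{L}}(\ell_{123},0)$, and hence $L_{\mathcal{L}}(\ell_{123},0)=V_{\mathcal{L}}(\ell_{123},0)/T$ is isomorphic to $V_{\mathcal{H}}(\ell_3,0)\otimes(V_{\tilde{Vir}}(c_{\tilde{Vir}},0)/\langle v_{p,q}\rangle)$.

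The only mild subtlety I anticipate is verifying carefully that the natural linear identification in the second paragraph respects all of the vertex algebra data; everything else is formal. Once that verification is recorded, part (1) is an elementary check and maximality is automatic from the simplicity already established.
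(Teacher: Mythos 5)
Your proposal is correct and follows essentially the same route as the paper: part (1) is the elementary check from the definition of the tensor product vertex operator, and part (2) constructs the map induced by the quotient projection on the second tensor factor, identifies its kernel as $V_{\mathcal{H}}(\ell_{3},0)\otimes\langle v_{p,q}\rangle$, verifies it is a vertex operator algebra morphism, and deduces maximality from the simplicity of the quotient. The only cosmetic difference is that you justify the linear isomorphism via exactness of tensoring over $\C$, whereas the paper simply observes surjectivity and computes the kernel directly.
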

\begin{proof}
The first one is just using definition. For the second one, first construct a map
$$\pi:V_{\mathcal{H}}(\ell_{3},0)\otimes V_{\tilde{Vir}}(c_{\tilde{Vir}},0)\longrightarrow V_{\mathcal{H}}(\ell_{3},0)\otimes (V_{\tilde{Vir}}(c_{\tilde{Vir}},0)/\langle v_{p,q}\rangle)$$
by $v\otimes w\mapsto v\otimes\overline{w}$ and extend linearly, then clearly it is surjective with kernel
 $V_{\mathcal{H}}(\ell_{3},0)\otimes\langle v_{p,q}\rangle$,
and so induces a desired linear isomorphism, and also it is clearly that the induced map takes conformal vector to conformal vector and is
a vertex algebra homomorphism, hence is a vertex operator algebra isomorphism.
\end{proof}

Therefore the structure of the simple vertex operator algebra is clear:
\begin{thm}\label{simple}
\begin{enumerate}
  \item When $\ell_{3}\neq 0$ and $c_{\tilde{Vir}}\neq c_{p,q}$, $L_{\mathcal{L}}(\ell_{123},0)=V_{\mathcal{L}}(\ell_{123},0),$ so $T=0$.
  \item When $\ell_{3}\neq 0$ and $c_{\tilde{Vir}}= c_{p,q}$,
$$L_{\mathcal{L}}(\ell_{123},0)=V_{\mathcal{L}}(\ell_{123},0)/T\cong
  V_{\mathcal{H}}(\ell_{3},0)\otimes (V_{\tilde{Vir}}(c_{\tilde{Vir}},0)/\langle v_{p,q}\rangle),$$
  where $T=\varphi(V_{\mathcal{H}}(\ell_{3},0)\otimes\langle v_{p,q}\rangle)$ for $\varphi$ defined in Theorem \ref{tensor prod}.
\end{enumerate}
\end{thm}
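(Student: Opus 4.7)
The plan is to view the theorem as the culmination of the tensor product decomposition established in Theorem \ref{tensor prod}, reducing everything to assertions about the two tensor factors, which are well understood.

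First, I would recall the isomorphism $\varphi : V_{\mathcal{H}}(\ell_{3},0)\otimes V_{\tilde{Vir}}(c_{\tilde{Vir}},0) \to V_{\mathcal{L}}(\ell_{123},0)$ of vertex operator algebras (valid for $\ell_{3}\neq 0$), noting that $\varphi$ maps (proper) ideals bijectively onto (proper) ideals. Hence the question of identifying the maximal ideal $T$ of $V_{\mathcal{L}}(\ell_{123},0)$ is equivalent to identifying the maximal ideal of the tensor product.

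For part (1), when $\ell_{3}\neq 0$ the Heisenberg vertex operator algebra $V_{\mathcal{H}}(\ell_{3},0)$ is simple; when $c_{\tilde{Vir}}\neq c_{p,q}$ the Virasoro vertex operator algebra $V_{\tilde{Vir}}(c_{\tilde{Vir}},0)$ is simple by the proposition just cited. By Corollary 4.7.3 of \cite{FHL}, a tensor product of simple vertex operator algebras is simple, so $V_{\mathcal{H}}(\ell_{3},0)\otimes V_{\tilde{Vir}}(c_{\tilde{Vir}},0)$ is simple. Transporting via $\varphi$, we conclude $V_{\mathcal{L}}(\ell_{123},0)$ is simple, i.e.\ $T=0$ and $L_{\mathcal{L}}(\ell_{123},0)=V_{\mathcal{L}}(\ell_{123},0)$.

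For part (2), when $c_{\tilde{Vir}}=c_{p,q}$, the previous proposition identifies $V_{\mathcal{H}}(\ell_{3},0)\otimes \langle v_{p,q}\rangle$ as the (unique) maximal ideal of the tensor product, with quotient $V_{\mathcal{H}}(\ell_{3},0)\otimes (V_{\tilde{Vir}}(c_{\tilde{Vir}},0)/\langle v_{p,q}\rangle)$, which is again simple by Corollary 4.7.3 of \cite{FHL}. Applying $\varphi$, the image $T:=\varphi(V_{\mathcal{H}}(\ell_{3},0)\otimes \langle v_{p,q}\rangle)$ is the maximal ideal of $V_{\mathcal{L}}(\ell_{123},0)$, and $\varphi$ descends to a vertex operator algebra isomorphism
\begin{equation*}
L_{\mathcal{L}}(\ell_{123},0)=V_{\mathcal{L}}(\ell_{123},0)/T \;\cong\; V_{\mathcal{H}}(\ell_{3},0)\otimes \bigl(V_{\tilde{Vir}}(c_{\tilde{Vir}},0)/\langle v_{p,q}\rangle\bigr).
\end{equation*}
There is essentially no obstacle beyond bookkeeping: all the substantive work has been done in Theorem \ref{tensor prod} and in the preceding proposition, and the only thing to verify carefully is that maximality of the ideal is preserved under $\varphi$, which is automatic since $\varphi$ is a vertex operator algebra isomorphism.
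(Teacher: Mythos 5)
Your proposal is correct and follows exactly the route the paper takes: the paper states this theorem as an immediate consequence of Theorem \ref{tensor prod} together with the two preceding propositions (simplicity of the tensor factors and the identification of $V_{\mathcal{H}}(\ell_{3},0)\otimes\langle v_{p,q}\rangle$ as the maximal ideal), giving no separate proof beyond ``the structure is clear.'' You have merely made explicit the transport of the (maximal) ideal through the isomorphism $\varphi$, which is indeed the only bookkeeping step involved.
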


Now for $\ell_{3}\neq 0$ and $c_{\tilde{Vir}}= c_{p,q}$,
 Zhu's algebra of the simple vertex operator algebra $L_{\mathcal{L}}(\ell_{123},0)$
is isomorphic to $A\left(V_{\mathcal{H}}(\ell_{3},0)\otimes (V_{\tilde{Vir}}(c_{\tilde{Vir}},0)/\langle v_{p,q}\rangle)\right)$
which is equal to $A(V_{\mathcal{H}}(\ell_{3},0))\otimes A(V_{\tilde{Vir}}(c_{\tilde{Vir}},0)/\langle v_{p,q}\rangle)$,
which then is isomorphic to $\C[x]\otimes(\C[y]/(G_{p,q}(y)))$ (where $(G_{p,q}(y))$ is a polynomial or degree $\frac{1}{2}(p-1)(q-1)$) (c.f. \cite{W}, \cite{FZ}),
 which is also of infinite dimensional,
so the simple vertex operator algebra $L_{\mathcal{L}}(\ell_{123},0)$ is also not regular, not $C_{2}$-cofinite and not rational.

\subsection{Unitary vertex operator algebra and its unitary module}
At the end of this section, we look at the unitary structure of the simple vertex operator algebra $L_{\mathcal{L}}(\ell_{123},0)$,
we follow some related notions by \cite{DLin}.
\begin{definition}\label{CFT}
{\em A vertex operator algebra $(V,Y,{\bf 1},\omega)$ is called {\em CFT-type}
if $V_{n}=0$, for $n<0$ and $V_{0}=\C {\bf 1}.$}
\end{definition}

Clearly, our $L_{\mathcal{L}}(\ell_{123},0)$ is of CFT-type.

\begin{definition}
{\em Let $(V,Y,{\bf 1},\omega)$ be a vertex operator algebra of CFT-type. An {\em anti-linear automorphism} $\phi$ of $V$ is an
anti-linear map $\phi:V\longrightarrow V$ such that $\phi({\bf 1})={\bf 1}$, $\phi(\omega)=\omega$ and
$\phi(u_{n}v)=\phi(u)_{n}\phi(v)$ for any $u,v\in V$ and $n\in\Z$.}
\end{definition}

\begin{definition}\label{unitary}
{\em Let $(V,Y,{\bf 1},\omega)$ be a vertex operator algebra of CFT-type and $\phi:V\longrightarrow V$ be an anti-linear involution,
i.e. an anti-linear automorphism of order 2. Then $(V,\phi)$ is called {\em unitary} if there exists a positive definite Hermitian
form $(,):V\times V\longrightarrow \C$ which is $\C$-linear on the first vector and anti-$\C$-linear on the second vector such that the following invariant
property holds: for any $a,u,v\in V$
\begin{eqnarray}
(Y(e^{zL(1)}(-z^{-2})^{L(0)}a,z^{-1})u,v)=(u,Y(\phi(a),z)v) \label{eq 3.5}
\end{eqnarray}
where $L(n)$ is defined by $Y(\omega,z)=\sum_{n\in\Z}L(n)z^{-n-2}.$}
\end{definition}

\begin{definition}\label{unitary voa mod}
{\em Let $(V,Y,{\bf 1},\omega)$ be a vertex operator algebra of CFT-type and $\phi:V\longrightarrow V$ be an anti-linear involution,
A $V$-module $(M,Y_{M})$ is called a {\em unitary $V$-module} if there exists a positive definite Hermitian
form $(,)_{M}:M\times M\longrightarrow \C$ which is $\C$-linear on the first vector and anti-$\C$-linear on the second vector such that the following invariant
property holds:
\begin{eqnarray}
(Y_{M}(e^{zL(1)}(-z^{-2})^{L(0)}a,z^{-1})w_{1},w_{2})_{M}=(w_{1},Y_{M}(\phi(a),z)w_{2})_{M} \label{eq 3.6}
\end{eqnarray}
for any $a\in V$, $w_{1},w_{2}\in M.$
}
\end{definition}

 \begin{rmk}\label{not isom inva}
{\em
Unitarity is not an isomorphic invariant property of vertex operator algebras, for example,
$V_{\mathcal{H}}(\ell_{3},0)$ is isomorphic to $V_{\mathcal{H}}(1,0)$ as vertex operator algebras
for any $0\neq\ell_{3}\in\C$, $V_{\mathcal{H}}(1,0)$ is a unitary vertex operator algebra (\cite{DLin}),
but of course it is not that for any $0\neq\ell_{3}\in\C$,
$V_{\mathcal{H}}(\ell_{3},0)$ is a unitary vertex operator algebra (for example, the positive definiteness of the Hermitian form requires $\ell_{3}\in\R_{> 0}$).
So even thought tensor product of unitary vertex operator algebras is unitary (Proposition 2.9 of \cite{DLin}),
and we know (Theorem \ref{tensor prod}, Theorem \ref{simple}) that our $L_{\mathcal{L}}(\ell_{123},0)$ is isomorphic to a tensor product of two
unitary vertex operator algebras under certain conditions (c.f. \cite{DLin})
we can't get that $V_{\mathcal{L}}(\ell_{123},0)$
is a unitary vertex operator algebra under those conditions.
}
\end{rmk}

Define an anti-linear map $\phi$ of $V_{\mathcal{L}}(\ell_{123},0)$ as follows:
$$\phi: V_{\mathcal{L}}(\ell_{123},0)\longrightarrow V_{\mathcal{L}}(\ell_{123},0),$$
$$I_{-k_{1}}\cdots I_{-k_{s}}L_{-m_{1}}\cdots L_{-m_{r}}{\bf 1}\mapsto (-1)^{s}I_{-k_{1}}\cdots I_{-k_{s}}L_{-m_{1}}\cdots L_{-m_{r}}{\bf 1},$$
where $r,s\geq 1$, $m_{1}\geq\cdots\geq m_{r}\geq 2$, $k_{1}\geq\cdots\geq k_{s}\geq 1$.

\begin{lem}
$\phi$ is an anti-linear involution of vertex operator algebra $V_{\mathcal{L}}(\ell_{123},0)$.
Furthermore, $\phi$ induces an anti-linear involution (also denoted by $\phi$) of $L_{\mathcal{L}}(\ell_{123},0)$.
\end{lem}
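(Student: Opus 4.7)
The plan is to view $\phi$ as coming from a Lie algebra involution of $\mathcal{L}$ itself. Specifically, I would first verify that the anti-linear map $\tilde{\phi}:\mathcal{L}\to\mathcal{L}$ fixing each $L_n$, negating each $I_n$, fixing $c_1$ and $c_3$, and negating $c_2$ preserves the bracket relations (\ref{eq:2.4})--(\ref{eq:2.6}): the sign flip on $c_2$ is forced so that $[L_m,I_n]=-nI_{m+n}-\delta_{m+n,0}(m^2+m)c_2$ remains consistent once each $I_k$ is negated, while $c_3$ stays fixed because the relation $[I_m,I_n]=m\delta_{m+n,0}c_3$ collects two sign flips. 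This $\tilde{\phi}$ extends anti-linearly to an involution of $U(\mathcal{L})$, and for it to descend to the quotient defining $V_{\mathcal{L}}(\ell_{123},0)$ the central parameters must satisfy $\overline{\ell_1}=\ell_1$, $\overline{\ell_2}=-\ell_2$, $\overline{\ell_3}=\ell_3$, which are the implicit reality/imaginarity conditions needed later for unitarity.

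Once this descent is in place, the induced map on $V_{\mathcal{L}}(\ell_{123},0)$ sends the PBW basis element $I_{-k_1}\cdots I_{-k_s}L_{-m_1}\cdots L_{-m_r}{\bf 1}$ to $(-1)^s$ times itself, matching the given formula; in particular $\phi({\bf 1})={\bf 1}$ and $\phi(\omega)=\phi(L_{-2}{\bf 1})=\omega$ follow from the $s=0$ cases, and $\phi^2=\mathrm{id}$ is immediate from $(-1)^{2s}=1$. For the remaining axiom $\phi(u_n v)=\phi(u)_n\phi(v)$ I would invoke Theorem \ref{VA1}, which states that $V_{\mathcal{L}}(\ell_{123},0)$ is generated as a vertex algebra by $\{\omega,I\}$; by a standard reduction (e.g., Proposition 5.7.9 of \cite{LL}) it then suffices to check the two cases $u=\omega$ and $u=I$. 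These reduce to the operator identities $\phi L_m=L_m\phi$ and $\phi I_m=-I_m\phi$ on $V_{\mathcal{L}}(\ell_{123},0)$, both of which follow inductively from $\phi({\bf 1})={\bf 1}$ by commuting the operator across each Lie algebra generator in a PBW monomial using the automorphism property of $\tilde{\phi}$.

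Finally, for the descent of $\phi$ to the simple quotient $L_{\mathcal{L}}(\ell_{123},0)=V_{\mathcal{L}}(\ell_{123},0)/T$: since $\phi$ is an anti-linear VOA automorphism, $\phi(T)$ is again a proper ideal of $V_{\mathcal{L}}(\ell_{123},0)$ and so is contained in the unique maximal proper ideal $T$; applying $\phi$ once more forces $\phi(T)=T$, so $\phi$ descends to an anti-linear involution of $L_{\mathcal{L}}(\ell_{123},0)$. The main obstacle I anticipate is the careful sign bookkeeping at the Lie algebra level, particularly getting the signs of $c_2$ and of $I_n$ right so that all three bracket relations of $\mathcal{L}$ are preserved; once that is in hand, the rest of the argument is formal and uses only results already established in Sections 2 and 3.
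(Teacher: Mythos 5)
Your proof is correct and shares the same skeleton as the paper's: reduce the identity $\phi(u_nv)=\phi(u)_n\phi(v)$ to the generators $\omega$ and $I$ (the paper does this with the subspace $U=\{u:\phi(u_nv)=\phi(u)_n\phi(v)\ \forall v,n\}$, closed under all products), verify it there via the operator identities $\phi L_m=L_m\phi$ and $\phi I_m=-I_m\phi$, and descend to $L_{\mathcal{L}}(\ell_{123},0)$ by noting that $\phi(T)$ is a proper submodule, hence contained in the maximal one. Where you genuinely differ is in how the operator identities are justified: the paper simply asserts $\phi(L_{n-1}v)=L_{n-1}\phi(v)$ and $\phi(I_nv)=-I_n\phi(v)$, whereas you derive them from an anti-linear Lie algebra involution $\tilde\phi$ of $\mathcal{L}$ and its descent through $U(\mathcal{L})$ to the induced module. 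This extra care buys something real: it exposes that the descent --- equivalently, the asserted operator identities --- requires $\overline{\ell_1}=\ell_1$, $\overline{\ell_2}=-\ell_2$, $\overline{\ell_3}=\ell_3$. Indeed, $\omega_3\omega=\tfrac{1}{2}\ell_1{\bf 1}$ gives $\phi(\omega_3\omega)=\tfrac{1}{2}\overline{\ell_1}{\bf 1}$ while $\phi(\omega)_3\phi(\omega)=\tfrac{1}{2}\ell_1{\bf 1}$; similarly $\omega_2I=-2\ell_2{\bf 1}$ and $I_1I=\ell_3{\bf 1}$ force the conditions on $\ell_2$ and $\ell_3$. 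So the lemma as stated for arbitrary $\ell_{123}$ is not quite correct, and your formulation with the reality/imaginarity hypotheses made explicit is the accurate one; since the unitarity theorems that invoke this lemma impose $\ell_2=0$ and real $\ell_1,\ell_3$ anyway, nothing downstream is affected. The only cosmetic point: Proposition 5.7.9 of \cite{LL} is stated for ($\C$-linear) vertex algebra homomorphisms, so for the generator reduction you should either cite its evident anti-linear analogue or run the paper's $U$-subspace argument directly, but this is a formality.
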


\begin{proof}
$\phi^{2}=id$, so it is enough to show that $\phi$ is an anti-linear automorphism.
By definition of our $\phi$ above, we have $\phi({\bf 1})={\bf 1}$, $\phi(\omega)=\omega$ and $\phi(I)=-I.$
Let
$$U=\{u\in V_{\mathcal{L}}(\ell_{123},0)\ | \ \phi(u_{n}v)=\phi(u)_{n}\phi(v), \forall\; v\in V_{\mathcal{L}}(\ell_{123},0),n\in\Z\},$$
then $U$ is a subspace of $V_{\mathcal{L}}(\ell_{123},0)$.
And it is straightforward to show that if $a,b\in U$, then $a_{m}b\in U$ for any $m\in\Z$.
Now we show that the generators $\omega$ and $I$ of $V_{\mathcal{L}}(\ell_{123},0)$ are in $U$,
$\forall\; v\in V_{\mathcal{L}}(\ell_{123},0),n\in\Z$,
$$\phi(\omega_{n}v)=\phi(L_{n-1}v)=L_{n-1}\phi(v)=\omega_{n}\phi(v)=(\phi(\omega))_{n}\phi(v),$$
$$\phi((I)_{n}v)=\phi(I_{n}v)=-I_{n}\phi(v)=\phi(I)_{n}\phi(v),$$
so $\omega,I\in U$, and then we have $V_{\mathcal{L}}(\ell_{123},0)=U$.
Thus $\phi$ is an anti-linear involution of $V_{\mathcal{L}}(\ell_{123},0)$.

Let $T$ be the maximal proper $\mathcal{L}$-submodule of $V_{\mathcal{L}}(\ell_{123},0)$, we have $\phi(T)$ is a proper $\mathcal{L}$-submodule
of $V_{\mathcal{L}}(\ell_{123},0)$, so $\phi(T)\subseteq T$, hence $\phi$ induces an anti-linear involution of $L_{\mathcal{L}}(\ell_{123},0)$.
\end{proof}

Note for the two Lie algebras $\widehat{\mathfrak{D}}$ and $\mathcal{L}$ defined in \cite{ACKP} and \cite{B1} respectively,
there exists a Lie algebra isomorphism between them (for notations related to $\widehat{\mathfrak{D}}$ we follow \cite{ACKP})
\begin{eqnarray*}
\rho:\mathcal{L}\longrightarrow \widehat{\mathfrak{D}};
\end{eqnarray*}
\begin{eqnarray*}
L_{m}\mapsto d_{m},\;c_{1}\mapsto c,\;c_{3}\mapsto c_{a},\;I_{n}\mapsto z^{n}-\sqrt{-1}\delta_{n,0}c_{3},\;c_{2}\mapsto-\sqrt{-1}c_{3},
\end{eqnarray*}
which then induces an isomorphism between the Verma modules (as $\mathcal{L}$-modules and as $\widehat{\mathfrak{D}}$-modules via $\rho$)
\begin{eqnarray*}
M_{\mathcal{L}}(\ell_{123},h_{1},h_{2})\cong\widetilde{R}(\ell_{1},h_{1},\ell_{3},h_{2}-\ell_{2},\sqrt{-1}\ell_{2}),
\end{eqnarray*}
and hence an isomorphism of irreducible $\mathcal{L}$-modules
\begin{eqnarray*}
L_{\mathcal{L}}(\ell_{123},h_{1},h_{2})\cong R(\ell_{1},h_{1},\ell_{3},h_{2}-\ell_{2},\sqrt{-1}\ell_{2}).
\end{eqnarray*}
In particular,
\begin{eqnarray*}
L_{\mathcal{L}}(\ell_{123},0)\cong R(\ell_{1},0,\ell_{3},-\ell_{2},\sqrt{-1}\ell_{2})
\end{eqnarray*}
as irreducible $\mathcal{L}$-modules,
where as irreducible $\mathcal{L}$-modules $L_{\mathcal{L}}(\ell_{123},0)=L_{\mathcal{L}}(\ell_{123},0,0).$

And via the isomorphism $\rho$, the anti-linear anti-involution $*$ defined in \cite{ACKP} induces
an anti-linear anti-involution $\sigma$ (see also \cite{B1}) on
 $\mathcal{L}$ defined as
\begin{eqnarray*}
&&{}\sigma(L_{n})=L_{-n},\;\;\sigma(I_{n})=I_{-n}-2\delta_{n,0}c_{2},\\
&&{}\sigma(c_{1})=c_{1},\;\;\sigma(c_{3})=c_{3},\;\;\sigma(c_{2})=-c_{2}.
\end{eqnarray*}

Then the unique contravariant (under $*$) Hermitian form in \cite{ACKP} induces a unique
 Hermitian form $(\;,\;)$ on
$L_{\mathcal{L}}(\ell_{123},0)$ such that $({\bf 1},{\bf 1})=1$ and is
contravariant with respect to the anti-involution $\sigma$, i.e.
\begin{eqnarray*}
(xu, v)=(u,\sigma(x)v),\;\;x\in\mathcal{L},\;\;u,v\in L_{\mathcal{L}}(\ell_{123},0),
\end{eqnarray*}
in particular
\begin{eqnarray*}
(I_{n}u, v)=(u,\sigma(I_{n})v),\;\;(L_{n}u,v)=(u,\sigma(L_{n})v),\;\;u,v\in L_{\mathcal{L}}(\ell_{123},0).
\end{eqnarray*}

Denote by $c_{m}=1-\frac{6}{m(m+1)},$, $h_{r,s}^{m}=\frac{(r(m+1)-sm)^{2}-1}{4m(m+1)}$, $1\leq s\leq r\leq m-1$, $ m\in\Z_{\geq 2}$.
Then for the positive definiteness of the Hermitian form,
by Theorem 6.6 of \cite{ACKP} we get that
\begin{prop}\label{pos-def}
The contravariant Hermitian form $(\;,\;)$ on
$L_{\mathcal{L}}(\ell_{123},h_{1},h_{2})$ is positive definite in precisely the following cases:
\begin{enumerate}
  \item $\ell_{3}=0$, then $\ell_{2}=0, h_{2}=0$ and $\ell_{1}\in\R_{\geq 1}, h_{1}\in\R_{\geq 0}$ or $\ell_{1}=c_{m}, h_{1}=h_{r,s}^{m}$, $1\leq s\leq r\leq m-1$, $ m\in\Z_{\geq 2}$;
  \item $\ell_{3}\in\R_{>0}$, then either
$$\ell_{1}-12\frac{\ell_{2}^{2}}{\ell_{3}}\in\R_{\geq 2},\;h_{1}-\frac{(h_{2}-\ell_{2})^{2}+(\sqrt{-1}\ell_{2})^{2}}{2\ell_{3}}\in\R_{\geq 0},$$
  or
$$\ell_{1}-12\frac{\ell_{2}^{2}}{\ell_{3}}=1+c_{m},\;
  h_{1}-\frac{(h_{2}-\ell_{2})^{2}+(\sqrt{-1}\ell_{2})^{2}}{2\ell_{3}}=h_{r,s}^{m},$$
where $1\leq s\leq r\leq m-1,$  $m\in\Z_{\geq 2}.$
\end{enumerate}
\end{prop}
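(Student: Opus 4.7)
The plan is to deduce Proposition \ref{pos-def} directly from Theorem 6.6 of \cite{ACKP} by transporting the contravariant Hermitian form along the Lie algebra isomorphism $\rho:\mathcal{L}\to\widehat{\mathfrak{D}}$ that is recorded just before the statement. Since $\rho$ induces an isomorphism of irreducible modules $L_{\mathcal{L}}(\ell_{123},h_{1},h_{2})\cong R(\ell_{1},h_{1},\ell_{3},h_{2}-\ell_{2},\sqrt{-1}\ell_{2})$, and since (by uniqueness) the contravariant Hermitian form on $L_{\mathcal{L}}(\ell_{123},h_{1},h_{2})$ pulls back from the unique contravariant Hermitian form on $R(\ell_{1},h_{1},\ell_{3},h_{2}-\ell_{2},\sqrt{-1}\ell_{2})$, positive definiteness of the two forms is equivalent. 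Thus the proof is essentially a change of parameters.

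First I would verify that the pull-back of $*$ through $\rho$ agrees with the anti-involution $\sigma$ displayed above the statement. This amounts to checking the identities
\begin{eqnarray*}
\rho(\sigma(L_{n}))=\rho(L_{-n}),\quad \rho(\sigma(I_{n}))=\rho(I_{-n}-2\delta_{n,0}c_{2}),\quad \rho(\sigma(c_{i}))=(\rho(c_{i}))^{*},
\end{eqnarray*}
using $\rho(I_{n})=z^{n}-\sqrt{-1}\delta_{n,0}c_{3}$ and $\rho(c_{2})=-\sqrt{-1}c_{3}$. The shift by $2\delta_{n,0}c_{2}$ in $\sigma(I_{n})$ is precisely what is needed so that $\sigma$ matches the $*$ used in \cite{ACKP}. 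By uniqueness (normalized by $({\bf 1},{\bf 1})=1$), the contravariant form on $L_{\mathcal{L}}(\ell_{123},h_{1},h_{2})$ must coincide with the pull-back of the form from $R(\ell_{1},h_{1},\ell_{3},h_{2}-\ell_{2},\sqrt{-1}\ell_{2})$.

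Next I would read off Theorem 6.6 of \cite{ACKP}, which characterizes positive definiteness of the form on $R(c,h,c_{a},a,\alpha)$ in terms of the five parameters, and substitute
\begin{eqnarray*}
c=\ell_{1},\quad h=h_{1},\quad c_{a}=\ell_{3},\quad a=h_{2}-\ell_{2},\quad \alpha=\sqrt{-1}\ell_{2}.
\end{eqnarray*}
In the case $c_{a}=\ell_{3}=0$, Theorem 6.6 forces $\alpha=0$ (so $\ell_{2}=0$) and $a=0$ (so $h_{2}=0$), and reduces to the classical Virasoro unitarity classification, yielding the continuous series $\ell_{1}\in\R_{\geq 1}$, $h_{1}\in\R_{\geq 0}$ together with the discrete series $\ell_{1}=c_{m}$, $h_{1}=h_{r,s}^{m}$. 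In the case $c_{a}=\ell_{3}>0$, Theorem 6.6 produces the shifted central charge and shifted conformal weight
\begin{eqnarray*}
\tilde{c}=c-12\frac{\alpha^{2}}{c_{a}}=\ell_{1}-12\frac{(\sqrt{-1}\ell_{2})^{2}}{\ell_{3}},\quad \tilde{h}=h-\frac{a^{2}+\alpha^{2}}{2c_{a}},
\end{eqnarray*}
and demands $\tilde{c}-1\in\R_{\geq 1}$ with $\tilde{h}\in\R_{\geq 0}$, or the analogous discrete case $\tilde{c}-1=c_{m}$, $\tilde{h}=h_{r,s}^{m}$. Substituting and simplifying the signs produces exactly the inequalities displayed in part (2) of the proposition.

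The routine but slightly delicate part is the bookkeeping of the sign $(\sqrt{-1}\ell_{2})^{2}=-\ell_{2}^{2}$ in the numerator of the $\tilde{c}$-shift versus $(h_{2}-\ell_{2})^{2}+(\sqrt{-1}\ell_{2})^{2}$ in the numerator of the $\tilde{h}$-shift; one must be careful that in the shift of central charge the contribution from $\alpha^{2}$ gives $-12\ell_{2}^{2}/\ell_{3}$, so that $\tilde{c}-1=\ell_{1}-1-12(\sqrt{-1}\ell_{2})^{2}/\ell_{3}=\ell_{1}-1+12\ell_{2}^{2}/\ell_{3}$, matching both the formulation of Proposition \ref{pos-def}(2) and the central charge $c_{\tilde{Vir}}$ of the Virasoro factor from Theorem \ref{tensor prod}. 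I expect this sign tracking, together with confirming that the ACKP form pulls back to the normalized contravariant form on $L_{\mathcal{L}}(\ell_{123},h_{1},h_{2})$ used here, to be the only nontrivial obstacle; after that the proposition follows immediately.
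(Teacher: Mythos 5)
Your proposal is correct and follows exactly the route the paper takes: the paper establishes the isomorphism $\rho$, the induced module isomorphism $L_{\mathcal{L}}(\ell_{123},h_{1},h_{2})\cong R(\ell_{1},h_{1},\ell_{3},h_{2}-\ell_{2},\sqrt{-1}\ell_{2})$, and the transported contravariant form, and then simply reads off Theorem 6.6 of \cite{ACKP} under that parameter dictionary. Your additional care with the anti-involution compatibility and the sign $(\sqrt{-1}\ell_{2})^{2}=-\ell_{2}^{2}$ is exactly the bookkeeping the paper leaves implicit.
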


For the unitary of $L_{\mathcal{L}}(\ell_{123},0)$ we need to show (\ref{eq 3.5}) holds and
we only need to show this for the generators $I,\omega$ of $L_{\mathcal{L}}(\ell_{123},0)$
by Proposition 2.11 of \cite{DLin}.
For $\omega$ we can show in the same way as Theorem 4.2 of \cite{DLin}, for $I$, we need do some more work since the anti-involution now is defined
differently.
For any $u,v\in L_{\mathcal{L}}(\ell_{123},0)$,
\begin{eqnarray*}
&&{}\left(Y(e^{zL_{1}}(-z^{-2})^{L_{0}}I,z^{-1})u,v  \right)  \\
&&{}= \left( Y(-z^{-2}I,z^{-1})u,v \right)
=\sum_{n\in\Z}-(I_{n}u,v)z^{n-1} \\
&&{}=\sum_{n\neq 0}-(I_{n}u,v)z^{n-1}+(-(I_{0}u,v)z^{-1}) \\
&&{}=\sum_{n\neq 0}-(u,I_{-n}v)z^{n-1}-(u,I_{0}v)z^{-1}-2\ell_{2}(u,v)z^{-1} \\
&&{}=-\sum_{n\in\Z}(u,I_{-n}v)z^{n-1}-2\ell_{2}(u,v)z^{-1} \\
&&{}=(u, Y(\phi(I),z)v)-2\ell_{2}(u,v)z^{-1},
\end{eqnarray*}
in order that (\ref{eq 3.5}) holds for $I$ we need to require that $\ell_{2}=0$.

The above shows that
\begin{thm}\label{unitary struc}
$(L_{\mathcal{L}}(\ell_{123},0),\phi)$ is a unitary vertex operator algebra if and only if one of the following holds:
\begin{enumerate}
  \item $\ell_{2}=0, \ell_{3}=0$, $\ell_{1}\in\R_{\geq 1}$ or $\ell_{1}=c_{m}, m\in\Z_{\geq 2}$;
  \item $\ell_{2}=0, \ell_{3}\in\R_{>0}$, $\ell_{1}\in\R_{\geq 2}$ or $\ell_{1}=1+c_{m}, m\in\Z_{\geq 2}.$
\end{enumerate}
\end{thm}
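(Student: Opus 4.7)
The plan is to combine two ingredients already essentially laid out in the preceding text: (i) the invariance identity (\ref{eq 3.5}) for the Hermitian form, which needs to hold for all $a\in L_{\mathcal{L}}(\ell_{123},0)$, and (ii) the positive definiteness of that Hermitian form, which is controlled by Proposition \ref{pos-def}. The anti-linear involution $\phi$ and the contravariant Hermitian form $(\,,\,)$ induced from the form on $L_{\mathcal{L}}(\ell_{123},0,0)$ have already been constructed, so what remains is to pin down exactly when these two pieces of data fit together.

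First I would reduce the invariance requirement to the two generators $\omega$ and $I=I_{-1}\mathbf{1}$ of $L_{\mathcal{L}}(\ell_{123},0)$ by appealing to Proposition 2.11 of \cite{DLin}, which says that if (\ref{eq 3.5}) holds on a generating set it holds throughout. For $\omega$, the verification is identical to the standard Virasoro computation and follows verbatim from Theorem 4.2 of \cite{DLin}, using $\phi(\omega)=\omega$, $L(1)\omega=0$, $L(0)\omega=2\omega$, together with the contravariance $\sigma(L_n)=L_{-n}$ of the induced form. For $I$, the computation has already been carried out in the paragraph immediately before the theorem statement: one obtains
\[
(Y(e^{zL_1}(-z^{-2})^{L_0}I,z^{-1})u,v)=(u,Y(\phi(I),z)v)-2\ell_2(u,v)z^{-1},
\]
so that (\ref{eq 3.5}) holds on $I$ if and only if $\ell_2=0$. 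This establishes the necessity of $\ell_2=0$ and, together with the $\omega$-step, the sufficiency of the invariance half whenever $\ell_2=0$.

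With $\ell_2=0$ in hand, I would then specialize Proposition \ref{pos-def} to the vacuum module, i.e.\ $h_1=h_2=0$. In case $\ell_3=0$ the hypothesis $\ell_2=0, h_2=0$ is automatic, and the remaining condition reads $\ell_1\in\R_{\geq 1}$ or $\ell_1=c_m$ (the value $h_1=0$ coincides with $h_{1,1}^m=0$, which always lies in the discrete series table). In case $\ell_3\in\R_{>0}$, the quantities $h_1-\tfrac{(h_2-\ell_2)^2+(\sqrt{-1}\ell_2)^2}{2\ell_3}$ and $\ell_1-12\ell_2^2/\ell_3$ collapse to $0$ and $\ell_1$ respectively, so the condition becomes $\ell_1\in\R_{\geq 2}$ or $\ell_1=1+c_m$. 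The case $\ell_3\in\C\setminus(\R_{\geq 0})$ is excluded because the Heisenberg subform $(I_{-n}\mathbf{1},I_{-n}\mathbf{1})=n\ell_3$ would fail to be positive, giving the necessity of $\ell_3=0$ or $\ell_3\in\R_{>0}$.

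Combining the two halves yields the stated equivalence. The main obstacle, and the only place where anything genuinely new has to be checked, is the $I$-generator identity, because the anti-involution $\sigma$ is not symmetric on $I_0$ (one has $\sigma(I_0)=I_0-2c_2$), which forces the anomalous $-2\ell_2(u,v)z^{-1}$ term and pinpoints the constraint $\ell_2=0$; everything else is a direct appeal to \cite{DLin} and to Proposition \ref{pos-def}.
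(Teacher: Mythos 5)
Your proposal is correct and follows essentially the same route as the paper: reduce the invariance identity (\ref{eq 3.5}) to the generators $\omega$ and $I$ via Proposition 2.11 of \cite{DLin}, handle $\omega$ as in Theorem 4.2 of \cite{DLin}, extract the constraint $\ell_{2}=0$ from the anomalous $-2\ell_{2}(u,v)z^{-1}$ term in the $I$-computation, and then specialize Proposition \ref{pos-def} to $h_{1}=h_{2}=0$ to obtain the conditions on $\ell_{1}$ and $\ell_{3}$. The only additions beyond the paper's argument are minor explicit remarks (e.g.\ $h_{1,1}^{m}=0$ and the exclusion of $\ell_{3}\notin\R_{\geq 0}$), which are already implicit in Proposition \ref{pos-def}.
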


Similar as the prove of Theorem \ref{unitary struc}, we can get that
\begin{thm}\label{unitary mod}
Let $\phi$ be defined as above. Then $L_{\mathcal{L}}(\ell_{123},h_{1},h_{2})$ is a unitary module of
$(L_{\mathcal{L}}(\ell_{123},0),\phi)$ if and only if one of the following holds:
\begin{enumerate}
  \item $\ell_{2}=0,$ $\ell_{3}=0$, $h_{2}=0$, $\ell_{1}\in\R_{\geq 1}, h_{1}\in\R_{\geq 0}$ or $\ell_{1}=c_{m}, h_{1}=h_{r,s}^{m}$, $1\leq s\leq r\leq m-1$, $ m\in\Z_{\geq 2}$;
  \item $\ell_{2}=0,$ $\ell_{3}\in\R_{>0}$, $\ell_{1}\in\R_{\geq 2}$, $h_{1}-\frac{(h_{2})^{2}}{2\ell_{3}}\in\R_{\geq 0},$
  or $\ell_{1}=1+c_{m}$,
  $h_{1}-\frac{(h_{2})^{2}}{2\ell_{3}}=h_{r,s}^{m}$, $1\leq s\leq r\leq m-1,$  $m\in\Z_{\geq 2}.$
\end{enumerate}
\end{thm}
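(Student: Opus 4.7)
The plan is to follow the same two-part strategy that produced Theorem \ref{unitary struc}, but now applied to the $L_0$-graded module $L_{\mathcal{L}}(\ell_{123},h_{1},h_{2})$: first pin down exactly when the canonical contravariant Hermitian form on this module is positive definite, and then check the invariance identity (\ref{eq 3.6}) on the two generators $\omega$ and $I$ of the vertex operator algebra.

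First, recall from the discussion preceding Proposition \ref{pos-def} that the isomorphism $\rho:\mathcal{L}\to\widehat{\mathfrak D}$ identifies $L_{\mathcal{L}}(\ell_{123},h_1,h_2)$ with $R(\ell_1,h_1,\ell_3,h_2-\ell_2,\sqrt{-1}\ell_2)$ and pushes forward the anti-involution $*$ of \cite{ACKP} to the anti-linear anti-involution $\sigma$ on $\mathcal{L}$. Thus the unique contravariant Hermitian form $(\,,\,)_M$ on $L_{\mathcal{L}}(\ell_{123},h_1,h_2)$ with $(v_{h_1,h_2},v_{h_1,h_2})_M=1$ is exactly the pull-back of the form on $R(\ell_1,h_1,\ell_3,h_2-\ell_2,\sqrt{-1}\ell_2)$, so Theorem 6.6 of \cite{ACKP} (which is Proposition \ref{pos-def} here) characterizes when it is positive definite. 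Specializing that statement to $\ell_2=0$ (so that $\ell_1-12\ell_2^2/\ell_3$ collapses to $\ell_1$ and $(h_2-\ell_2)^2+(\sqrt{-1}\ell_2)^2$ collapses to $h_2^2$) yields precisely the numerical conditions listed in items (1) and (2) of the theorem.

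Second, I will verify the invariance property (\ref{eq 3.6}) for $a=\omega$ and $a=I$; by Proposition 2.11 of \cite{DLin} this suffices since $\omega$ and $I$ generate the vertex operator algebra. For $a=\omega$ the identity reduces, via $\phi(\omega)=\omega$ and the standard rearrangement $e^{zL(1)}(-z^{-2})^{L(0)}\omega=z^{-4}\omega$, to the statement that $(L_n w_1,w_2)_M=(w_1,L_{-n}w_2)_M$, which is exactly the $\sigma$-contravariance of $(\,,\,)_M$ restricted to the Virasoro part. For $a=I$, the computation already carried out in the proof of Theorem \ref{unitary struc} goes through verbatim on the module and yields
\[
\bigl(Y_M(e^{zL(1)}(-z^{-2})^{L(0)}I,z^{-1})w_1,w_2\bigr)_M=\bigl(w_1,Y_M(\phi(I),z)w_2\bigr)_M-2\ell_2(w_1,w_2)_M z^{-1},
\]
so the invariance identity holds if and only if $\ell_2=0$.

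Combining the two steps gives the theorem: the requirement $\ell_2=0$ comes from checking the invariance on $I$, and the remaining numerical constraints on $\ell_1,\ell_3,h_1,h_2$ are exactly the positive-definiteness conditions of Proposition \ref{pos-def} with $\ell_2$ set to zero. The only mildly nontrivial part of the argument is ensuring that the Hermitian form arising from \cite{ACKP} via $\rho$ agrees with the one determined by the invariance property (\ref{eq 3.6}); this will follow from uniqueness of a contravariant form normalized by $(v_{h_1,h_2},v_{h_1,h_2})_M=1$, once we note that both $\sigma$ and the conjugation encoded in (\ref{eq 3.6}) restrict on the modes $L_n,I_n$ to the same anti-linear anti-involution when $\ell_2=0$. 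I expect this compatibility check to be the one real obstacle; after it is handled, everything else is bookkeeping.
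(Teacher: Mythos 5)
Your proposal is correct and follows essentially the same route as the paper: the paper's own proof is literally the one-line remark that the argument is ``similar as the proof of Theorem \ref{unitary struc}'', i.e.\ combine the positive-definiteness classification of Proposition \ref{pos-def} (specialized to $\ell_{2}=0$) with the invariance check (\ref{eq 3.6}) on the generators $\omega$ and $I$, the latter forcing $\ell_{2}=0$. Your write-up simply spells out that same two-step argument in more detail, including the compatibility remark about the contravariant form, which the paper leaves implicit.
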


\begin{rmk}
{\em
\begin{enumerate}
  \item
Under the above conditions,
 $L_{\mathcal{L}}(\ell_{123},0)$ is another example of a unitary vertex operator algebra which
is neither rational nor $C_{2}$-cofinite.
\item By Theorem \ref{tensor prod} and Theorem \ref{simple}, we know that for any $\ell_{3}\neq 0$,
$V_{\mathcal{H}}(\ell_{3},0)$ (with $\omega_{H}$) is also a vertex operator subalgebra of
$L_{\mathcal{L}}(\ell_{123},0)$, and we can get the characterization of the commutant, its commutant
in the simple vertex operator algebra is a simple Virasoro vertex operator algebra, and
we know the condition for the unitary Virasoro vertex operator algebra (\cite{DLin}), but as mentioned in
the Remark \ref{not isom inva}, we can't say that the commutant of $V_{\mathcal{H}}(\ell_{3},0)$ (with $\omega_{H}$)
in $L_{\mathcal{L}}(\ell_{123},0)$ under those condition (which holds for the unitary Virasoro vertex operator algebra) is a unitary vertex operator algebra.
But we have the following:
\item When $\ell_{2}=0,$ $V_{\mathcal{H}}(\ell_{3},0)$ is equipped with the standard conformal vector $\omega^{'}$,
since our defined $\phi$ satisfies $\phi(\omega^{'})=\omega^{'}$, and $V_{\mathcal{H}}(\ell_{3},0)$ is a vertex operator subalgebra of
$L_{\mathcal{L}}(\ell_{123},0)$,
by Corollary 2.8 of \cite{DLin} we get that under the conditions in Theorem \ref{unitary struc},
the commutant $(U^{c},\phi|_{U^{c}})$ of $V_{\mathcal{H}}(\ell_{3},0)$ in $L_{\mathcal{L}}(\ell_{123},0)$ is also a unitary vertex operator algebra.
\end{enumerate}
}
\end{rmk}

\section{Rank two case with $\phi$-coordinated modules}
\def\theequation{4.\arabic{equation}}
\setcounter{equation}{0}

In this section, we associate the rank two twisted Heisenberg-Virasoro algebra $\mathcal{L}^{*}$ with the
vertex algebra $V_{\widehat{\mathfrak{L}^{*}}}(\ell_{1234},0)$
in terms of its $\phi$-coordinated module.
More specifically, we show that there is a one-to-one correspondence between the restricted
$\mathcal{L}^{*}$-module of {\em level} $\ell_{1234}$
and the $\phi$-coordinated module structure for the vertex algebra $V_{\widehat{\mathfrak{L}^{*}}}(\ell_{1234},0)$,
where $\mathfrak{L}^{*}$ is a new Lie algebra.

We first recall from \cite{XLT} the definition of
rank two twisted Heisenberg-Virasoro algebra.

\begin{definition}\label{Virasoro-like algebra}
{\em The rank two twisted Heisenberg-Virasoro algebra $\mathcal{L}^{*}$ is a vector space spanned by
$t_{1}^{m}t_{2}^{n}, E_{m,n},K_{i}, i=1,2,3,4$, for $m,n\in\Z^{2}\backslash\{(0,0)\}$,
where $K_{i}$ are central elements, with the following Lie brackets:
\begin{eqnarray*}
[t_{1}^{m}t_{2}^{n}, t_{1}^{r}t_{2}^{s}]=0;\;\;\;[K_{i}, \mathcal{L}^{*}]=0,\; i=1,2,3,4;
\end{eqnarray*}
\begin{eqnarray*}
[t_{1}^{m}t_{2}^{n}, E_{r,s}]=(nr-ms)t_{1}^{m+r}t_{2}^{n+s}+\delta_{m+r,0}\delta_{n+s,0}(mK_{1}+nK_{2});
\end{eqnarray*}
\begin{eqnarray*}
[E_{m,n},E_{r,s}]=(nr-ms)E_{m+r,n+s} +\delta_{m+r,0}\delta_{n+s,0}(mK_{3}+nK_{4}),
\end{eqnarray*}
for $(m,n) , (r,s)\in\Z^{2}\backslash\{(0,0)\}.$}
\end{definition}

We form the generating functions as
\begin{eqnarray}
T_{m}(x)=\sum\limits_{n\in\Z}t_{1}^{m}t_{2}^{n}x^{-n},\;\; E_{m}(x)=\sum\limits_{n\in\Z}E_{m,n}x^{-n}.    \label{eq:4.1}
\end{eqnarray}
Rewriting the Lie brackets as
\begin{eqnarray*}
[t_{1}^{m}t_{2}^{n}, E_{r,s}]=\left(n(m+r)-m(n+s)\right)t_{1}^{m+r}t_{2}^{n+s}+\delta_{m+r,0}\delta_{n+s,0}(mK_{1}+nK_{2}),
\end{eqnarray*}
\begin{eqnarray*}
[E_{m,n},E_{r,s}]=\left(n(m+r)-m(n+s)\right)E_{m+r,n+s} +\delta_{m+r,0}\delta_{n+s,0}(mK_{3}+nK_{4}),
\end{eqnarray*}
we can get the following generating function brackets:
\begin{eqnarray}
&&{} [T_{m}(x_{1}),E_{r}(x_{2})]\nonumber\\
&&{}=(m+r)T_{m+r}(x_{2})\left(x_{2}\frac{\partial}{\partial x_{2}}\right)\delta\left(\frac{x_{2}}{x_{1}}\right)
         + m\left(x_{2}\frac{\partial}{\partial x_{2}}T_{m+r}(x_{2})\right)\delta\left(\frac{x_{2}}{x_{1}}\right) \nonumber\\
&&{}\quad+m\delta_{m+r,0}\delta\left(\frac{x_{2}}{x_{1}}\right)K_{1}
           +\delta_{m+r,0}\left(x_{2}\frac{\partial}{\partial x_{2}}\right)\delta\left(\frac{x_{2}}{x_{1}}\right)K_{2},  \label{eq:4.2}
\end{eqnarray}
and
\begin{eqnarray}
&&{} [E_{m}(x_{1}), E_{r}(x_{2})]  \nonumber\\
&&{}=(m+r)E_{m+r}(x_{2})\left(x_{2}\frac{\partial}{\partial x_{2}}\right)\delta\left(\frac{x_{2}}{x_{1}}\right)
         + m\left(x_{2}\frac{\partial}{\partial x_{2}}E_{m+r}(x_{2})\right)\delta\left(\frac{x_{2}}{x_{1}}\right) \nonumber\\
&&{}\quad+m\delta_{m+r,0}\delta\left(\frac{x_{2}}{x_{1}}\right)K_{3}
           +\delta_{m+r,0}\left(x_{2}\frac{\partial}{\partial x_{2}}\right)\delta\left(\frac{x_{2}}{x_{1}}\right)K_{4}.  \label{eq:4.3}
\end{eqnarray}

\begin{definition}\label{restricted}
{\em An $\mathcal{L}^{*}$-module $W$ is said to be {\em restricted} if for any $w\in W, (m,n) , (r,s)\in\Z^{2}\backslash\{(0,0)\}$,
$t_{1}^{m}t_{2}^{n}w=0$ for $n$ sufficiently large and $E_{r,s}w=0$ for $s$ sufficiently large,
or equivalently, if $T_{m}(x)\in {\mathcal{E}}(W)$ and $E_{r}(x)\in {\mathcal{E}}(W)$ for $m,r\in\Z$.
We say an $\mathcal{L}^{*}$-module $W$ is of {\em level} $\ell_{1234}$ if the central element $K_{i}$ acts as
scalar $\ell_{i}$ for $i=1,2,3,4.$}
\end{definition}

Next we associate the Lie algebra $\mathcal{L}^{*}$ with a specific vertex algebra and its
$\phi$-coordinated modules. We first construct a new Lie algebra.

\begin{definition}\label{New Lie algebra 2}
{\em
Let $\widehat{\mathfrak{L}^{*}}$ be a vector space spanned by $T^{m}\otimes t^{n},E^{r}\otimes t^{s}, K_{i}$ for
$(m,n) , (r,s)\in\Z^{2}, i=1,2,3,4$,
we define
\begin{eqnarray*}
[T^{m}\otimes t^{n}, T^{r}\otimes t^{s}]=0;\;\;\;[K_{i}, \widehat{\mathfrak{L}^{*}}]=0,\;\;\mbox{for}\;\; i=1,2,3,4;
\end{eqnarray*}
\begin{eqnarray*}
&&{}[T^{m}\otimes t^{n}, E^{r}\otimes t^{s}] \nonumber\\
&&{}=(nr-ms)T^{m+r}\otimes t^{n+s-1} + m\delta_{m+r,0}\delta_{n+s+1,0}K_{1}+n\delta_{m+r,0}\delta_{n+s,0}K_{2};
\end{eqnarray*}
\begin{eqnarray*}
&&{}[E^{m}\otimes t^{n}, E^{r}\otimes t^{s}] \nonumber\\
&&{}=(nr-ms)E^{m+r}\otimes t^{n+s-1} + m\delta_{m+r,0}\delta_{n+s+1,0}K_{3}+n\delta_{m+r,0}\delta_{n+s,0}K_{4}.
\end{eqnarray*}}
\end{definition}
It is straightforward to check that these are Lie brackets, so $\widehat{\mathfrak{L}^{*}}$ is a Lie algebra.
 For $(m,n), (r,s)\in\Z^{2}$, we denote $T^{m}\otimes t^{n},E^{r}\otimes t^{s}$ by $(T^{m})_{n},(E^{r})_{s}$,
and we set
\begin{eqnarray}
T^{m}(x)=\sum\limits_{n\in\Z}(T^{m})_{n}x^{-n-1}\;\;\; E^{r}(x)=\sum\limits_{s\in\Z}(E^{r})_{s}x^{-s-1}.      \label{eq:4.4}
\end{eqnarray}
Then the defining relations of $\widehat{\mathfrak{L}^{*}}$ amount to:
\begin{eqnarray}
&&{} [T^{m}(x_{1}),E^{r}(x_{2})]\nonumber\\
&&{}=(m+r)T^{m+r}(x_{2})\frac{\partial}{\partial x_{2}}x_{1}^{-1}\delta\left(\frac{x_{2}}{x_{1}}\right)
         + m\left(\frac{\partial}{\partial x_{2}}T^{m+r}(x_{2})\right)x_{1}^{-1}\delta\left(\frac{x_{2}}{x_{1}}\right) \nonumber\\
&&{}\quad+m\delta_{m+r,0}x_{1}^{-1}\delta\left(\frac{x_{2}}{x_{1}}\right)K_{1}
           +\delta_{m+r,0}\frac{\partial}{\partial x_{2}}x_{1}^{-1}\delta\left(\frac{x_{2}}{x_{1}}\right)K_{2},  \label{eq:4.5}
\end{eqnarray}
and
\begin{eqnarray}
&&{} [E^{m}(x_{1}),E^{r}(x_{2})]\nonumber\\
&&{}=(m+r)E^{m+r}(x_{2})\frac{\partial}{\partial x_{2}}x_{1}^{-1}\delta\left(\frac{x_{2}}{x_{1}}\right)
         + m\left(\frac{\partial}{\partial x_{2}}E^{m+r}(x_{2}\right)x_{1}^{-1}\delta\left(\frac{x_{2}}{x_{1}}\right) \nonumber\\
&&{}\quad+m\delta_{m+r,0}x_{1}^{-1}\delta\left(\frac{x_{2}}{x_{1}}\right)K_{3}
           +\delta_{m+r,0}\frac{\partial}{\partial x_{2}}x_{1}^{-1}\delta\left(\frac{x_{2}}{x_{1}}\right)K_{4}.  \label{eq:4.6}
\end{eqnarray}

Set
$$ \widehat{\mathfrak{L}^{*}}_{\geq 0}= \coprod_{m\in\Z}\left(T^{m}\otimes\C[t]\right)\oplus\coprod_{r\in\Z}\left(E^{r}\otimes \C[t]\right)\oplus\sum\limits_{i=1}^{4}\C K_{i},$$
$$\widehat{\mathfrak{L}^{*}}_{< 0}= \coprod_{m\in\Z}\left(T^{m}\otimes t^{-1}\C[t^{-1}]\right)\oplus\coprod_{r\in\Z}\left(E^{r}\otimes t^{-1}\C[t^{-1}]\right).$$
We see that $\widehat{\mathfrak{L}^{*}}_{\geq 0}$ and $\widehat{\mathfrak{L}^{*}}_{< 0}$ are Lie subalgebras
and $\widehat{\mathfrak{L}^{*}}=\widehat{\mathfrak{L}^{*}}_{\geq 0}\oplus\widehat{\mathfrak{L}^{*}}_{< 0}$
as a vector space.
Let $\ell_{i}\in\C, i=1,2,3,4$, we denote by $\C_{\ell_{1234}}=\C$ the one-dimensional
$\widehat{\mathfrak{L}^{*}}_{\geq 0}$-module
with $\coprod_{m\in\Z}\left(T^{m}\otimes\C[t]\right)\oplus\coprod_{r\in\Z}\left(E^{r}\otimes \C[t]\right)$ acting trivially and
$K_{i}$ acting as $\ell_{i}$ for $i=1,2,3,4.$
Form the induced module
$$V_{\widehat{\mathfrak{L}^{*}}}(\ell_{1234},0)
=U(\widehat{\mathfrak{L}^{*}})\otimes_{U(\widehat{\mathfrak{L}^{*}}_{\geq 0})}\C_{\ell_{1234}}. $$
Set ${\bf 1} =1\otimes 1\in V_{\widehat{\mathfrak{L}^{*}}}(\ell_{1234},0)$,
define a linear operator $\overline{d}$ on $\widehat{\mathfrak{L}^{*}}$
by
$$\overline{d}(K_{i})=0, \;\;\mbox{for}\;\; i=1,2,3,4,$$
$$\overline{d}(T^{m}\otimes t^{n})=-n T^{m}\otimes t^{n-1},\;\;\mbox{and}\;\; \overline{d}(E^{r}\otimes t^{s})=-s E^{r}\otimes t^{s-1}.$$
By Theorem 5.7.1 of \cite{LL}, $V_{\widehat{\mathfrak{L}^{*}}}(\ell_{1234},0)$ is a vertex algebra,
which is uniquely determined by the condition that ${\bf 1}$ is the vacuum vector and
\begin{eqnarray}
 Y((T^{m})_{-1}{\bf 1},x)=T^{m}(x),  \label{eq:4.7}
 \end{eqnarray}
\begin{eqnarray}
 Y((E^{r})_{-1}{\bf 1},x)=E^{r}(x)  \label{eq:4.8}
 \end{eqnarray}
for $(T^{m})_{-1},(E^{r})_{-1}\in\widehat{\mathfrak{L}^{*}} $, $m,r\in\Z$.
Furthermore, $\{(T^{m})_{-1}{\bf 1},(E^{r})_{-1}{\bf 1} \ | \ m,r\in\Z\}$ is a generating subset of $V_{\widehat{\mathfrak{L}^{*}}}(\ell_{1234},0)$.

Similarly, $V_{\widehat{\mathfrak{L}^{*}}}(\ell_{1234},0)$ has its universal property like Remark \ref{universal1} and Remark \ref{universal2}.
As one of our main results of this section, we have:

\begin{thm} Let $W$ be a restricted $\mathcal{L}^{*}$-module of level $\ell_{1234}$.
                  Then there exists a
                  $\phi$-coordinated $V_{\widehat{\mathfrak{L}^{*}}}(\ell_{1234},0)$-module
                 structure $Y_{W}(\cdot,x)$ on $W$, which is uniquely
                 determined by
                 $$Y_{W}((T^{m})_{-1}{\bf 1},x) = T^{m}(x)\;\;\mbox{and}\;\; Y_{W}((E^{m})_{-1}{\bf 1},x) = E^{m}(x)
                   \  \  \  \mbox{ for }m\in\Z.$$
\end{thm}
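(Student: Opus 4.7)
The plan is to follow the same strategy as in Theorem \ref{coordinated mod1}, adapted to the rank two setting where the generating set is now an infinite family indexed by $m\in\Z$. Uniqueness is immediate, since $\{(T^{m})_{-1}\mathbf{1},(E^{m})_{-1}\mathbf{1}\mid m\in\Z\}$ generates $V_{\widehat{\mathfrak{L}^{*}}}(\ell_{1234},0)$ as a vertex algebra, so the whole $\phi$-coordinated module structure is determined by its values on this set. For existence, set
$$U_{W}=\{\mathbf{1}_{W}\}\cup\{T_{m}(x),E_{m}(x)\mid m\in\Z\}\subset\mathcal{E}(W).$$
From the commutation relations (\ref{eq:4.2}) and (\ref{eq:4.3}), by Lemma 2.1 of \cite{Li5} (the analog we used to derive the pairwise locality for $\widehat{L},\widehat{I}$ in the rank one case) the pairs $(T_{m}(x),T_{r}(x))$, $(T_{m}(x),E_{r}(x))$ and $(E_{m}(x),E_{r}(x))$ satisfy locality of order at most two. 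Hence $U_{W}$ is a local subset of $\mathcal{E}(W)$, and Theorem \ref{coordinated} yields a vertex algebra $\langle U_{W}\rangle_{e}$ under the operation $Y_{\mathcal{E}}^{e}$ with $W$ a natural $\phi$-coordinated module and $Y_{W}(a(x),z)=a(z)$ for $a(x)\in\langle U_{W}\rangle_{e}$.

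The next step is to compute the nonzero products $T_{m}(x)_{i}^{e}T_{r}(x)$, $T_{m}(x)_{i}^{e}E_{r}(x)$ and $E_{m}(x)_{i}^{e}E_{r}(x)$ using Lemma 4.13/Proposition 4.14 of \cite{Li5} together with (\ref{eq:4.2}), (\ref{eq:4.3}). These products are all zero except for a short list: for instance
\begin{align*}
T_{m}(x)_{0}^{e}E_{r}(x) &= (m+r)\,x\tfrac{\partial}{\partial x}T_{m+r}(x)-m\,x\tfrac{\partial}{\partial x}T_{m+r}(x),\\
T_{m}(x)_{1}^{e}E_{r}(x) &= (\text{scalar})\,T_{m+r}(x),
\end{align*}
plus the central contributions $\delta_{m+r,0}(m\ell_{1}\mathbf{1}_{W}+\ell_{2}\partial)$ from the $K_{1},K_{2}$ terms, and similarly for the $E_{m},E_{r}$ pair with $K_{3},K_{4}$. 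Plugging these into Borcherds' commutator formula and matching with (\ref{eq:4.5}) and (\ref{eq:4.6}) shows that $\langle U_{W}\rangle_{e}$ is a module for the Lie algebra $\widehat{\mathfrak{L}^{*}}$ of level $\ell_{1234}$, with $(T^{m})_{n},(E^{r})_{s}$ acting as the coefficients $T_{m}(x)_{n}^{e},E_{r}(x)_{s}^{e}$. Since $T_{m}(x)_{n}^{e}\mathbf{1}_{W}=E_{r}(x)_{s}^{e}\mathbf{1}_{W}=0$ for $n,s\geq 0$, the vector $\mathbf{1}_{W}$ satisfies the defining annihilation conditions of the vacuum in $V_{\widehat{\mathfrak{L}^{*}}}(\ell_{1234},0)$.

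By the universal property of the induced module (the rank two analog of Remark \ref{universal2}), there is a unique $\widehat{\mathfrak{L}^{*}}$-module homomorphism $\psi:V_{\widehat{\mathfrak{L}^{*}}}(\ell_{1234},0)\to\langle U_{W}\rangle_{e}$ sending $\mathbf{1}$ to $\mathbf{1}_{W}$, and by construction
$$\psi((T^{m})_{-1}\mathbf{1})=T_{m}(x),\qquad\psi((E^{m})_{-1}\mathbf{1})=E_{m}(x).$$
For each $s\in\Z$ and $v\in V_{\widehat{\mathfrak{L}^{*}}}(\ell_{1234},0)$ we then have
$$\psi(((T^{m})_{-1}\mathbf{1})_{s}v)=\psi((T^{m})_{s}v)=T_{m}(x)_{s}^{e}\psi(v)=\psi((T^{m})_{-1}\mathbf{1})_{s}^{e}\psi(v),$$
and similarly for $E^{m}$. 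Because $\{(T^{m})_{-1}\mathbf{1},(E^{m})_{-1}\mathbf{1}\mid m\in\Z\}$ generates $V_{\widehat{\mathfrak{L}^{*}}}(\ell_{1234},0)$ as a vertex algebra, Proposition 5.7.9 of \cite{LL} promotes $\psi$ to a vertex algebra homomorphism. Transporting $Y_{W}$ along $\psi$ gives $W$ the desired $\phi$-coordinated $V_{\widehat{\mathfrak{L}^{*}}}(\ell_{1234},0)$-module structure with $Y_{W}((T^{m})_{-1}\mathbf{1},x)=T_{m}(x)$ and $Y_{W}((E^{m})_{-1}\mathbf{1},x)=E_{m}(x)$.

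The main obstacle is bookkeeping rather than conceptual: unlike the rank one case where two generators sufficed, here one must verify the $Y_{\mathcal{E}}^{e}$-bracket identities uniformly in the integer parameter $m$, taking care that the mixed $T_{m}/E_{r}$ products and the four independent central terms $K_{1},K_{2},K_{3},K_{4}$ reproduce exactly (\ref{eq:4.5})--(\ref{eq:4.6}) after the substitution implicit in $\phi(x,z)=xe^{z}$. Once this identification is in hand, the universal property and Proposition 5.7.9 of \cite{LL} close the argument just as in the rank one case.
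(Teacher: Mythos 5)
Your proposal follows the paper's proof essentially verbatim: the same local subset $U_{W}=\{\mathbf{1}_{W}\}\cup\{T_{m}(x),E_{m}(x)\mid m\in\Z\}$, the same appeal to Theorem \ref{coordinated} for $\langle U_{W}\rangle_{e}$, the same computation of the $e$-products via Lemma 4.13/Proposition 4.14 of \cite{Li5} followed by Borcherds' commutator formula and matching with (\ref{eq:4.5})--(\ref{eq:4.6}), and the same conclusion via the universal property and Proposition 5.7.9 of \cite{LL}. One slip in your illustrative formula: the coefficient of $x\frac{\partial}{\partial x}T_{m+r}(x)$ in $T_{m}(x)_{0}^{e}E_{r}(x)$ is $m$ (it comes from the term $m\bigl(x_{2}\frac{\partial}{\partial x_{2}}T_{m+r}(x_{2})\bigr)\delta\bigl(\frac{x_{2}}{x_{1}}\bigr)$ in (\ref{eq:4.2})), not $(m+r)-m=r$ as your expression reads.
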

\begin{proof}Since $\{(T^{m})_{-1}{\bf 1},(E^{r})_{-1}{\bf 1} \ | \ m,r\in\Z\}$ generates $V_{\widehat{\mathfrak{L}^{*}}}(\ell_{1234},0)$
as a vertex algebra, the uniqueness is clear. We now prove the existence.
Set $U_{W}=\{ {\bf 1}_{W}\}\cup\{T_{m}(x),E_{m}(x)\ | \ m\in\Z\}\subset\mathcal{E}(W)$.
From (\ref{eq:4.2}) and (\ref{eq:4.3}), by using Lemma 2.1 of \cite{Li5} we see that
\begin{eqnarray}
(x_{1}-x_{2})^{2} [T_{m}(x_{1}),E_{r}(x_{2})]=0\;\;\mbox{and}\;\;
(x_{1}-x_{2})^{2}[E_{m}(x_{1}), E_{r}(x_{2})]=0.  \label{eq:4.9}
\end{eqnarray}
Then $U_{W}$ is a local subset of $\mathcal{E}(W)$, by Theorem 3.2,
 $U_{W}$ generates a vertex algebra $\langle U_{W}\rangle_{e}$
 under the vertex operator operation $Y_{\mathcal{E}}^{e}$ with $W$ a $\phi$-coordinated module,
 where $Y_{W}(a(x),z) = a(z)$ for $a(x)\in \langle U_{W}\rangle_{e}.$
 With (\ref{eq:4.2}) and (\ref{eq:4.3}), by using the Lemma 4.13 or Proposition 4.14 of \cite{Li5}, we have
 $$T_{m}(x)_{i}^{e}E_{r}(x) = 0   \;\;\mbox{for}\; i\geq 2,$$
  $$T_{m}(x)_{1}^{e}E_{r}(x) =(m+r)T_{m+r}(x)+\delta_{m+r,0}\ell_{2}{\bf 1}_{W},$$
 $$T_{m}(x)_{0}^{e}E_{r}(x) =m\left(x\frac{\partial}{\partial x}T_{m+r}(x)\right)+m\delta_{m+r,0}\ell_{1}{\bf 1}_{W},$$
 and
 $$E_{m}(x)_{i}^{e}E_{r}(x) = 0   \;\;\mbox{for}\; i\geq 2,$$
  $$E_{m}(x)_{1}^{e}E_{r}(x) =(m+r)E_{m+r}(x)+\delta_{m+r,0}\ell_{4}{\bf 1}_{W},$$
 $$E_{m}(x)_{0}^{e}E_{r}(x) =m\left(x\frac{\partial}{\partial x}E_{m+r}(x)\right)+m\delta_{m+r,0}\ell_{3}{\bf 1}_{W}.$$
Then again by Borcherds' commutator formula we have
\begin{eqnarray*}
&&[Y_{\mathcal{E}}^{e}(T_{m}(x),x_{1}),Y_{\mathcal{E}}^{e}(E_{r}(x),x_{2})] \nonumber\\
&=&\sum_{i\geq 0}Y_{\mathcal{E}}^{e}(T_{m}(x)_{i}^{e}E_{r}(x),x_{2})
   \frac{1}{i!}\left(\frac{\partial}{\partial x_{2}}\right)^{i}x_{1}^{-1}\delta\left(\frac{x_{2}}{x_{1}}\right)  \nonumber\\
&=&mY_{\mathcal{E}}^{e}(x\frac{\partial}{\partial x}T_{m+r}(x),x_{2})x_{1}^{-1}\delta\left(\frac{x_{2}}{x_{1}}\right)
    +m \delta_{m+r,0}\ell_{1}{\bf 1}_{W} x_{1}^{-1}\delta\left(\frac{x_{2}}{x_{1}}\right)
                                  \nonumber\\
 &&{} +(m+r)Y_{\mathcal{E}}^{e}(T_{m+r}(x),x_{2}) \frac{\partial}{\partial x_{2}} x_{1}^{-1}\delta\left(\frac{x_{2}}{x_{1}}\right)
       + \delta_{m+r,0}\ell_{2}{\bf 1}_{W} \frac{\partial}{\partial x_{2}} x_{1}^{-1}\delta\left(\frac{x_{2}}{x_{1}}\right) ,
 \end{eqnarray*}
 and
 \begin{eqnarray*}
&&[Y_{\mathcal{E}}^{e}(E_{m}(x),x_{1}),Y_{\mathcal{E}}^{e}(E_{r}(x),x_{2})] \nonumber\\
&=&\sum_{i\geq 0}Y_{\mathcal{E}}^{e}(E_{m}(x)_{i}^{e}E_{r}(x),x_{2})
   \frac{1}{i!}\left(\frac{\partial}{\partial x_{2}}\right)^{i}x_{1}^{-1}\delta\left(\frac{x_{2}}{x_{1}}\right)  \nonumber\\
&=&mY_{\mathcal{E}}^{e}(x\frac{\partial}{\partial x}E_{m+r}(x),x_{2})x_{1}^{-1}\delta\left(\frac{x_{2}}{x_{1}}\right)
    +m \delta_{m+r,0}\ell_{3}{\bf 1}_{W} x_{1}^{-1}\delta\left(\frac{x_{2}}{x_{1}}\right)
                                  \nonumber\\
 &&{} +(m+r)Y_{\mathcal{E}}^{e}(E_{m+r}(x),x_{2}) \frac{\partial}{\partial x_{2}} x_{1}^{-1}\delta\left(\frac{x_{2}}{x_{1}}\right)
       + \delta_{m+r,0}\ell_{4}{\bf 1}_{W} \frac{\partial}{\partial x_{2}} x_{1}^{-1}\delta\left(\frac{x_{2}}{x_{1}}\right) .
 \end{eqnarray*}
Similar as in the proof of Theorem \ref{coordinated mod1} we get
$W$ is a $\phi$-coordinated module
with
$$Y_{W}((T^{m})_{-1}{\bf 1},x) = T_{m}(x)\;\;\mbox{and}\;\; Y_{W}((E^{m})_{-1}{\bf 1},x) = E_{m}(x)
                   \  \  \  \mbox{ for }m\in\Z.$$
\end{proof}

On the other hand, we have:
\begin{thm}
Let $W$ be a $\phi$-coordinated $V_{\widehat{\mathfrak{L}^{*}}}(\ell_{1234},0)$-module.
Then $W$ is a restricted $\mathcal{L}^{*}$-module of level $\ell_{1234}$ with $T_{m}(x)=Y_{W}((T^{m})_{-1}{\bf 1},x)$,
 and $E_{m}(x)=Y_{W}((E^{m})_{-1}{\bf 1},x)$ for $m\in\Z.$
 \end{thm}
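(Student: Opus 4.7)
The plan is to mirror the strategy used in Theorem \ref{coordinated mod2} for the rank one case, adapted to the rank two setting. Since $W$ is a $\phi$-coordinated module, Proposition 5.9 of \cite{Li4} allows us to convert the vertex algebra products $u_i v$ inside $V_{\widehat{\mathfrak{L}^{*}}}(\ell_{1234},0)$ into a commutator formula for $Y_W$ on $W$, with the crucial feature that the $\delta$-function derivatives get replaced by $(x_2\partial/\partial x_2)$-type derivatives — which is exactly what appears in the generating-function brackets (\ref{eq:4.2}) and (\ref{eq:4.3}) of $\mathcal{L}^{*}$.

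First I would verify locality: from (\ref{eq:4.9}) (or equivalently from (\ref{eq:4.5})--(\ref{eq:4.6}) combined with (\ref{eq:2.1})) the pairs of generating series $Y(v,x)$ with $v\in\{(T^m)_{-1}\mathbf{1},(E^r)_{-1}\mathbf{1}\}$ are mutually local, which is a prerequisite for Proposition 5.9 of \cite{Li4}. Next I would compute the products $((T^m)_{-1}\mathbf{1})_i ((E^r)_{-1}\mathbf{1})$ and $((E^m)_{-1}\mathbf{1})_i ((E^r)_{-1}\mathbf{1})$ for $i\ge 0$ in the vertex algebra $V_{\widehat{\mathfrak{L}^{*}}}(\ell_{1234},0)$; by (\ref{eq:4.7}), (\ref{eq:4.8}), and the Lie bracket formulas in Definition \ref{New Lie algebra 2}, these collapse to vacuum-annihilated combinations of $(T^{m+r})_{-2}\mathbf{1}$, $(T^{m+r})_{-1}\mathbf{1}$, and central scalar contributions from $K_1,K_2$ (respectively $(E^{m+r})_{-2}\mathbf{1}$, $(E^{m+r})_{-1}\mathbf{1}$ and $K_3,K_4$), only in the slots $i=0,1$.

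Then I would substitute into the $\phi$-coordinated commutator formula (Proposition 5.9 of \cite{Li4}):
\begin{eqnarray*}
[Y_W(u,x_1),Y_W(v,x_2)] = \mathrm{Res}_{x_0}\, x_1^{-1}\delta\!\left(\tfrac{x_2 e^{x_0}}{x_1}\right) x_2 e^{x_0}\, Y_W(Y(u,x_0)v,x_2),
\end{eqnarray*}
so that each $x_0^i$ coefficient produces a factor $(x_2\partial/\partial x_2)^i\delta(x_2/x_1)$. Applying Lemma 3.7 of \cite{Li4} (the translation property for $\phi$-coordinated modules) gives
$$Y_W((T^m)_{-2}\mathbf{1},x)=x\tfrac{\partial}{\partial x}Y_W((T^m)_{-1}\mathbf{1},x),$$
and similarly for $(E^m)_{-2}\mathbf{1}$. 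Comparing term by term with (\ref{eq:4.2}) and (\ref{eq:4.3}) then shows that setting $t_1^m t_2^n=(T^m)_{-1}\mathbf{1}$-coefficient $n$ of $Y_W$ and $E_{m,n}=(E^m)_{-1}\mathbf{1}$-coefficient $n$ of $Y_W$ with $K_i$ acting as $\ell_i$ gives exactly the defining relations of $\mathcal{L}^{*}$ at level $\ell_{1234}$.

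Finally, the restriction property is automatic: by the definition of a $\phi$-coordinated module, $Y_W((T^m)_{-1}\mathbf{1},x)$ and $Y_W((E^m)_{-1}\mathbf{1},x)$ lie in $\mathrm{Hom}(W,W((x)))=\mathcal{E}(W)$, which is precisely the condition in Definition \ref{restricted}. The main technical obstacle I anticipate is bookkeeping the precise coefficients in the commutator formula — in particular, making sure that the exponential $x_2 e^{x_0}$ in the $\phi$-coordinated residue and the shift $n+s-1$ appearing in Definition \ref{New Lie algebra 2} (as opposed to $n+s$ in $\mathcal{L}^{*}$) conspire to yield the factors $(m+r)$ and $m$ in front of the two $\delta$-derivative terms of (\ref{eq:4.2})--(\ref{eq:4.3}) correctly, and that the central terms come out with the right powers of $x_2\partial/\partial x_2$ matching $K_1,K_2,K_3,K_4$.
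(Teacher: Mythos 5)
Your proposal follows essentially the same route as the paper's own proof: locality from (\ref{eq:4.5})--(\ref{eq:4.6}) via (\ref{eq:2.1}), computation of the products $((T^{m})_{-1}{\bf 1})_{i}(E^{r})_{-1}{\bf 1}$ and $((E^{m})_{-1}{\bf 1})_{i}(E^{r})_{-1}{\bf 1}$ (which, as you note, survive only for $i=0,1$), substitution into the commutator formula of Proposition 5.9 of \cite{Li4}, the translation identity from Lemma 3.7 of \cite{Li4}, comparison with (\ref{eq:4.2})--(\ref{eq:4.3}), and the observation that the restricted condition is built into the definition of a $\phi$-coordinated module. The bookkeeping you flag at the end (the shift $n+s-1$ in Definition \ref{New Lie algebra 2} versus $n+s$ in $\mathcal{L}^{*}$, and the factor $x_{2}e^{x_{0}}$ in the residue producing the $(m+r)$ and $m$ coefficients) is exactly what the paper's displayed computations verify, so the argument is correct as proposed.
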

\begin{proof}
For $m,r\in\Z,$ since $Y((T^{m})_{-1}{\bf 1},x)=T^{m}(x),Y((E^{m})_{-1}{\bf 1},x)=E^{m}(x)$,
 with (\ref{eq:4.5}) and (\ref{eq:4.6}), by using (\ref{eq:2.1}) we see that
$$(x_{1}-x_{2})^{2}[Y((T^{m})_{-1}{\bf 1},x_{1}),Y((E^{r})_{-1}{\bf 1},x_{2})]=0,$$
$$(x_{1}-x_{2})^{2}[Y((E^{m})_{-1}{\bf 1},x_{1}),Y((E^{r})_{-1}{\bf 1},x_{2})]=0.$$
Note that for $m,r\in\Z, i\geq 0$
\begin{eqnarray*}
&&{}((T^{m})_{-1}{\bf 1})_{i}(E^{r})_{-1}{\bf 1}=(T^{m})_{i}(E^{r})_{-1}{\bf 1}=[T^{m}\otimes t^{i}, E^{r}\otimes t^{-1}]{\bf 1}\nonumber\\
&&{}=(ri+m)T^{m+r}\otimes t^{i-2}{\bf 1}+m\delta_{m+r,0}\delta_{i,0}\ell_{1}{\bf 1}+i\delta_{m+r,0}\delta_{i-1,0}\ell_{2}{\bf 1}.
\end{eqnarray*}
and
\begin{eqnarray*}
&&{}((E^{m})_{-1}{\bf 1})_{i}(E^{r})_{-1}{\bf 1}=(E^{m})_{i}(E^{r})_{-1}{\bf 1}=[E^{m}\otimes t^{i},E^{r}\otimes t^{-1}]{\bf 1}\nonumber\\
&&{}=(ri+m)E^{m+r}\otimes t^{i-2}{\bf 1}+m\delta_{m+r,0}\delta_{i,0}\ell_{3}{\bf 1}+i\delta_{m+r,0}\delta_{i-1,0}\ell_{4}{\bf 1}.
\end{eqnarray*}
By Proposition 5.9 of \cite{Li4}, we have
\begin{eqnarray*}
&&{}[Y_{W}((T^{m})_{-1}{\bf 1},x_{1}),Y_{W}((E^{r})_{-1}{\bf 1},x_{2})] \nonumber\\
&&{}=\mbox{Res}_{x_{0}}x_{1}^{-1}\delta\left(\frac{x_{2}e^{x_{0}}}{x_{1}}\right)
    x_{2}e^{x_{0}}Y_{W}(Y(T^{m},x_{0})E^{r},x_{2})\nonumber\\
&&{}=mY_{W}((T^{m+r})_{-2}{\bf 1},x_{2})\delta\left(\frac{x_{2}}{x_{1}}\right)
   +m\delta_{m+r,0}\delta\left(\frac{x_{2}}{x_{1}}\right)\ell_{1}{\bf 1}_{W}
                      \nonumber\\
&&{}\quad\;+(m+r)Y_{W}((T^{m+r})_{-1}{\bf 1},x_{2})\left(x_{2}\frac{\partial}{\partial x_{2}}\right)\delta\left(\frac{x_{2}}{x_{1}}\right)
           +\delta_{m+r,0}\left(x_{2}\frac{\partial}{\partial x_{2}}\right)\delta\left(\frac{x_{2}}{x_{1}}\right)\ell_{2}{\bf 1}_{W},
\end{eqnarray*}
and
\begin{eqnarray*}
&&{}[Y_{W}((E^{m})_{-1}{\bf 1},x_{1}),Y_{W}((E^{r})_{-1}{\bf 1},x_{2})] \nonumber\\
&&{}=\mbox{Res}_{x_{0}}x_{1}^{-1}\delta\left(\frac{x_{2}e^{x_{0}}}{x_{1}}\right)
    x_{2}e^{x_{0}}Y_{W}(Y(E^{m},x_{0})E^{r},x_{2})\nonumber\\
&&{}=mY_{W}((E^{m+r})_{-2}{\bf 1},x_{2})\delta\left(\frac{x_{2}}{x_{1}}\right)
  +m\delta_{m+r,0}\delta\left(\frac{x_{2}}{x_{1}}\right)\ell_{3}{\bf 1}_{W}
      \nonumber\\
&&{}\quad\;+(m+r)Y_{W}((E^{m+r})_{-1}{\bf 1},x_{2})\left(x_{2}\frac{\partial}{\partial x_{2}}\right)\delta\left(\frac{x_{2}}{x_{1}}\right)
           +\delta_{m+r,0}\left(x_{2}\frac{\partial}{\partial x_{2}}\right)\delta\left(\frac{x_{2}}{x_{1}}\right)\ell_{4}{\bf 1}_{W},
\end{eqnarray*}
then we can prove as Theorem \ref{coordinated mod2} that $W$ is a restricted $\mathcal{L}^{*}$-module of level $\ell_{1234}$.
\end{proof}

\end{document}